\newcommand{\bi}{\begin{itemize}}  %begin itemize
\newcommand{\ei}{\end{itemize}}     %end itemize
\newcommand{\bc}{\begin{center}}  %begin center
\newcommand{\ec}{\end{center}}     %end center
\newcommand{\ls}[1]
   {\dimen0=\fontdimen6\the\font \lineskip=#1\dimen0
   \advance\lineskip.5\fontdimen5\the\font \advance\lineskip-\dimen0
   \lineskiplimit=.9\lineskip \baselineskip=\lineskip
   \advance\baselineskip\dimen0 \normallineskip\lineskip
   \normallineskiplimit\lineskiplimit \normalbaselineskip\baselineskip
   \ignorespaces }
\numberwithin{equation}{section}
\newcommand{\ilim} {\mathop{\rm lim\,inf}}
\newtheorem{lemma}{Lemma}[section]
\newtheorem{theorem}[lemma]{Theorem}
\newtheorem{corollary}[lemma]{Corollary}
\newtheorem{example}[lemma]{Example}
\newtheorem{remark}[lemma]{Remark}
\def\S{\mathbb{S}}
\def\K{\mathbb{K}}
\def\Y{\mathbb{Y}}
\def\h{\mathbf{I}}
\def\X{\mathbb{X}}
\def\E{\mathbb{E}}
\def\A{\mathbb{A}}
\def\H{\mathbb{H}}
\def\R{\mathbb{R}}
\def\P{\mathbb{P}}
\def\F{\mathbb{F}}
\def\C{\mathbb{C}}
\def\W{\mathbb{W}}
\def\c{\bar{c}}
\def\B{\mathcal{B}}
\def\oo{\mathcal{O}}
\def\dist{{\rho_{tv}}}
\newcommand\Wc{{\mathop{\longrightarrow}\limits^w}}
\newcommand\Sc{{\mathop{\longrightarrow}\limits^s}}
\def\I{\mathbf{I}}
\title{Convergence of Probability Measures and Markov Decision Models with Incomplete Information}
\begin{document}

\maketitle

\begin{center}
Eugene~A.~Feinberg \footnote{Department of Applied Mathematics and
Statistics,
 Stony Brook University,
Stony Brook, NY 11794-3600, USA, eugene.feinberg@sunysb.edu},\
Pavlo~O.~Kasyanov\footnote{Institute for Applied System Analysis,
National Technical University of Ukraine ``Kyiv Polytechnic
Institute'', Peremogy ave., 37, build, 35, 03056, Kyiv, Ukraine,\
kasyanov@i.ua.},\ and Michael~Z.~Zgurovsky\footnote{National Technical University of Ukraine
``Kyiv Polytechnic Institute'', Peremogy ave., 37, build, 1, 03056,
Kyiv, Ukraine,\
 zgurovsm@hotmail.com
}\\

\bigskip
%July 3, 2007
\end{center}

\centerline{\emph{This article is dedicated to 80th birthday of Academician Albert Nikolaevich Shiryaev}}

\begin{abstract}
This paper deals with three major types of convergence of
probability measures on metric spaces: weak convergence, setwise
converges, and convergence in the total variation.  First, it
describes and compares necessary and sufficient conditions
for these types of convergence, some of which are well-known, in
terms of convergence of probabilities of open and closed sets and,
for the probabilities on the real line, in terms of convergence of
distribution functions.  Second, it provides % convenient
criteria for weak and setwise convergence of probability measures and continuity of stochastic  kernels in terms of convergence of
probabilities defined on the base of the topology generated by the
metric.  Third, it provides applications to control of Partially Observable Markov Decision Processes and, in particular, to Markov
Decision Models with incomplete information.
\end{abstract}

\section{Introduction} \label{S1}
This paper deals with convergence of probability measures and relevant
applications to control of stochastic systems with incomplete state
observations.  Convergence of probability measures and control of stochastic systems
under incomplete information are among the areas to which Albert Nikolayevich Shiryaev has made
fundamental contributions.  In particular, convergence of
probability measures and limit theorems for stochastic processes
were studied in his joint papers with his distinguished students
Yuri Mikhailovich
%Mikhailovich
Kabanov and Robert Shevilevich
%Shevilivich
Liptser (e.g.,
\cite{KLSh}) and in his monograph with Jean Jacod~\cite{JSh}.  Control
of stochastic processes with incomplete information was the major
topic of his two influential papers \cite{Sh1, Sh2}, and this
topic is related to his monograph with Liptser \cite{LSh} on
statistics of stochastic processes.

In Section~\ref{S2} of this paper we describe  three major types
of convergence of probability measures defined on metric spaces:
weak convergence, setwise convergence, and convergence in the
total variation.  In addition to the definitions, we provide two
groups of mostly known results: characterizations of these types of convergence
via convergence of probability measures of open and closed sets, and, for probabilities on a real line, %the
%measures on $(\R,{\cal B}(\R))$,
via convergence of
distribution functions. In section~\ref{S3} we describe %the
criteria for weak and setwise convergences in terms of convergence of
probabilities of the elements of a countable base of the topology. Section~\ref{3A} deals with continuity of transition probabilities.
In particular, Theorem~\ref{mainthkern} describes sufficient conditions for a probability measure, defined on a product of two spaces and depending on a parameter, to have a transition probability satisfying certain continuity properties.  This result can be interpreted as a sufficient condition for  continuity  in Bayes's formula.
  Section~\ref{S4} describes
recent results on optimization of Partially Observable Markov
Decision Processes (POMDPs) from Feinberg et al.~\cite{FKZ} as well as new results.
Section~\ref{S5} describes an application of the results from
Sections~\ref{3A} and \ref{S4} to a particular class of POMDPs, that we call
Markov Decision Models with Incomplete Information ({MDMIIs}). The
difference between a POMDP and an MDMII is that for a POMDP the
states of the system and observations are related via a stochastic kernel, called an observation stochastic kernel,
while for an MDMII the
state of the system is a vector, consisting of $(m+n)$
coordinates, of which $m$ coordinates are observable and $n$
coordinates are not observable.  MDMIIs were studied mainly in
early publications including in Aoki~\cite{Ao}, Dynkin~\cite{Dy}, Shiryaev~\cite{Sh2},
Hinderer~\cite{Hi}, Savarigi and Yoshikava~\cite{SY},
Rhenius~\cite{Rh}, Rieder~\cite{Ri}, Yushkevich~\cite{Yu}, Dynkin
and Yushkevich~\cite{DY}, and B\"auerle and Rieder~\cite{BR}, while
POMDPs were studied by Bertsekas and Shreve~\cite{BS},
Hern\'andez-Lerma~\cite{HL}, and in many later publications.

Feinberg et al.~\cite{FKZ} described sufficient conditions for the
existence of optimal policies, validity of optimality equations,
and convergence of value iterations to optimal values for POMDPs
 with standard Borel state, action, and observation spaces and for MDMIIs
 with standard Borel state and action spaces; see also conference and seminar proceedings \cite{FKZ1, FKZ2}. In both cases, the goal is
 either to minimize the expected total costs, with the one-step cost
 function being nonnegative, or to minimize the expected total discounted cost,
with the one-step cost function being bounded below.
  For POMDPs these sufficient conditions
 are: $K$-inf-compactness of the cost function, weak continuity of
 the transition stochastic kernel, and continuity in the total variation of
 the observation stochastic kernel.  These results are described in Section~\ref{S4} as well as sufficient conditions for weak continuity of transition probabilities for a COMDP from Feinberg et al.~\cite{FKZ} in terms of the transition function $H$ in the filtering equation~(\ref{3.1}). In this paper we   introduce sufficient conditions in terms of joint distributions of posteriory distributions and observations; see Theorem~\ref{teor:Rtotvar}.  The notion of $K$-inf-compactness of a function defined on a graph of
 a set-valued map was introduced in Feinberg et al.~\cite{FKN}.

 Though an MDMII is a particular case of
 an
 POMDP, there is no observation stochastic kernel  in the definition of an
 MDMII.  However, the observation stochastic kernel  can be defined for an MDMII in a natural way, and this definition
 transforms an MDMII into a POMDP, but in this %particular
 POMDP the defined observation stochastic kernel  is not continuous in the
 total variation.  Feinberg et al.~\cite{FKZ} described additional equicontinuity
 conditions on the stochastic  kernels of MDMIIs, under which  optimal
 policies exist, optimality equations hold, and value iterations
converge to optimal values.  By using results from Sections~\ref{3A} and \ref{S4}, in Section~\ref{S5}
 we strengthen the results from Feinberg et al.~\cite{FKZ} on MDMIIs by providing weaker assumptions
 on transition probabilities than the assumptions introduced in Feinberg et al.~\cite{FKZ}.%~\cite[Section 7]{FKZ}.

\section{Three types of convergence of probability
measures}\label{S2} Let $\S$ be a metric space and  ${\mathcal
B}(\S)$ be its Borel $\sigma$-field, that is, the $\sigma$-field
generated by all open subsets of the metric space $\S$.  For
$S\in\B( \S)$  denote by ${\mathcal B}(S)$ the $\sigma$-field
whose elements are intersections of $S$ with elements of
${\mathcal B}(\S)$. Observe that $S$ is a metric space with the
same metric as on $\S$, and ${\mathcal B}(S)$ is its Borel
$\sigma$-field. For a metric space $\S$,  denote by $\P(\S)$ the
\textit{set of probability measures} on $(\S,{\mathcal B}(\S)).$ A
sequence of probability measures $\{P_n\}_{n=1,2,\ldots}$ from
$\P(\S)$ \textit{converges weakly (setwise)} to $P\in\P(\S)$ if
for any bounded continuous (bounded Borel-measurable) function $f$
on $\S$
\[\int_\S f(s)P_n(ds)\to \int_\S f(s)P(ds) \qquad {\rm as \quad
}n\to\infty.
\]
We write $P_n\Wc P$ ($P_n\Sc P$) if the sequence
$\{P_n\}_{n=1,2,\ldots}$ from $\P(\S)$ converges weakly (setwise)
to $P\in\P(\S).$  The definition of Lebesgue-Stiltjes integrals
implies that $P_n\Sc P$ if and only if $P_n(E)\to P(E)$ for each
$E\in{\cal B}(\S)$ as $n\to\infty.$
%
%Let $\cal O$ and $\cal C$ be respectively the sets of open and
%closed subsets of $\S.$  The following well-known statement
%characterizes weak convergence in terms if convergence properties
%of sequences $\{P_n(A)\}_{n=1,2,\ldots}$ for open and closed
%subsets $A$ of $\S.$
The following two theorems are well-known.

\begin{theorem}\label{t1} {\rm (Shiryaev~\cite[Theorem 1, p.
311]{Sh}).}
The following statements are equivalent:

(i) $P_n\Wc P;$

(ii) $\liminf_{n\to\infty} P_n(\oo)\ge P(\oo)$ for each open
subset $\oo\subseteq\S;$

(iii) $\limsup_{n\to\infty} P_n(C)\le P(C)$ for each closed subset
$C\subseteq \S.$
\end{theorem}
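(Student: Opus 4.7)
The plan is to prove the equivalence (i) $\Leftrightarrow$ (ii), and to observe that (ii) and (iii) are actually the same statement under the bijection between open and closed subsets of $\S$ given by complementation. Indeed, for any closed $C\subseteq \S$ with complement $\oo=\S\setminus C$, one has $P_n(\oo)=1-P_n(C)$ and $P(\oo)=1-P(C)$, so $\liminf_n P_n(\oo)\ge P(\oo)$ is literally the rewriting of $\limsup_n P_n(C)\le P(C)$.

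For (i) $\Rightarrow$ (ii), fix an open set $\oo$ and set $F=\S\setminus\oo$. The key step is to approximate $\I_\oo$ from below by bounded continuous functions, which is possible because $\S$ is a metric space: the distance $d(\cdot,F):=\inf_{y\in F}d(\cdot,y)$ is $1$-Lipschitz, strictly positive exactly on $\oo$, and vanishes on $F$. Setting
\[
f_k(s):=\min\{k\,d(s,F),\,1\},\qquad k=1,2,\ldots,
\]
gives a pointwise nondecreasing sequence of continuous $[0,1]$-valued functions with $f_k\uparrow \I_\oo$. Since $f_k\le \I_\oo$, assumption (i) implies
\[
\int_\S f_k\,dP=\lim_{n\to\infty}\int_\S f_k\,dP_n\le \liminf_{n\to\infty}P_n(\oo),
\]
and monotone convergence, $\int_\S f_k\,dP\uparrow P(\oo)$, then yields (ii).

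For (ii) $\Rightarrow$ (i), let $f$ be bounded and continuous on $\S$. After an affine rescaling it suffices to treat $0\le f\le 1$. The layer-cake identity gives
\[
\int_\S f\,dQ=\int_0^1 Q(\{f>t\})\,dt,\qquad Q\in\P(\S).
\]
For every $t$, the set $\{f>t\}$ is open by continuity of $f$, so (ii) supplies $P(\{f>t\})\le \liminf_n P_n(\{f>t\})$. Since all integrands are uniformly bounded by $1$ on $[0,1]$, Fatou's lemma under the outer integral produces $\int_\S f\,dP\le \liminf_n\int_\S f\,dP_n$. Applying the same argument to $1-f$ yields $\limsup_n\int_\S f\,dP_n\le \int_\S f\,dP$, and combining the two gives (i).

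The only non-routine step is the construction of the continuous lower approximants $f_k$ in (i) $\Rightarrow$ (ii); this is where the metric (rather than merely topological) hypothesis on $\S$ is essential, through continuity of $d(\cdot,F)$. Once this approximation is available, the remaining ingredients (layer cake plus Fatou, and the complementation equivalence of (ii) and (iii)) are bookkeeping.
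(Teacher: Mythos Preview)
Your proof is correct and follows the standard Portmanteau argument. Note, however, that the paper does not supply its own proof of this theorem: it is stated as a well-known result with a citation to Shiryaev~\cite[Theorem~1, p.~311]{Sh}, so there is no in-paper argument to compare against. One minor point worth tightening: in the step (i)~$\Rightarrow$~(ii), the function $d(\cdot,F)$ is only well-defined when $F\ne\emptyset$; the case $\oo=\S$ (so $F=\emptyset$) is trivial since then $P_n(\oo)=P(\oo)=1$, but it is cleaner to dispose of it explicitly before invoking the distance-to-$F$ construction.
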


Let $\R^1$ be a real line with the Euclidean metric.  For a
$P,P_n\in \P(\R^1)$ define the distribution functions
$F(x)=P\{(-\infty,x] \}$ and $F_n(x)=P_n\{(-\infty,x] \},$
$x\in\R^1.$
\begin{theorem}\label{t2} {\rm (Shiryaev~\cite[Theorem 2, p.
314]{Sh}).} 
For $\S=\R^1$ the following statements are equivalent:

(i) $P_n\Wc P;$

(ii) $F_n(x)\to F(x)$ for all points $x\in\R^1$ of continuity of
the distribution function $F$.
\end{theorem}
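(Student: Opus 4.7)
The plan is to derive each direction from Theorem~\ref{t1}, exploiting the facts that $F$ is right-continuous and that the set of discontinuity points of $F$ is at most countable, so its continuity points are dense in $\R^1$.

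For (i)$\Rightarrow$(ii), fix a continuity point $x$ of $F$, so that $P\{x\}=0$ and hence $P((-\infty,x))=P((-\infty,x])=F(x)$. Since $(-\infty,x]$ is closed and $(-\infty,x)$ is open, Theorem~\ref{t1} yields
\[
\limsup_{n\to\infty} F_n(x) = \limsup_{n\to\infty} P_n((-\infty,x]) \le F(x)
\]
and
\[
\liminf_{n\to\infty} F_n(x) \ge \liminf_{n\to\infty} P_n((-\infty,x)) \ge F(x),
\]
so $F_n(x)\to F(x)$.

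For (ii)$\Rightarrow$(i), I would verify condition (ii) of Theorem~\ref{t1}. Let $\oo\subseteq\R^1$ be open and write it as a countable disjoint union $\oo=\bigcup_{k\ge 1}(a_k,b_k)$ of open intervals (allowing $a_k=-\infty$ or $b_k=+\infty$). For each $k$, choose sequences of continuity points $a_k^{(j)}\downarrow a_k$ and $b_k^{(j)}\uparrow b_k$ with $a_k<a_k^{(j)}<b_k^{(j)}<b_k$, which is possible by the density of continuity points of $F$. Then
\[
\liminf_{n\to\infty} P_n((a_k,b_k)) \ge \lim_{n\to\infty}\bigl(F_n(b_k^{(j)})-F_n(a_k^{(j)})\bigr) = F(b_k^{(j)})-F(a_k^{(j)}).
\]
Letting $j\to\infty$ and using right-continuity of $F$ at $a_k$ (so $F(a_k^{(j)})\to F(a_k)$) together with $F(b_k^{(j)})\to P((-\infty,b_k))$, the right-hand side tends to $P((a_k,b_k))$, giving $\liminf_n P_n((a_k,b_k)) \ge P((a_k,b_k))$.

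To finish, for any finite $N$, disjointness of the intervals and the previous estimate yield
\[
\liminf_{n\to\infty} P_n(\oo) \ge \liminf_{n\to\infty}\sum_{k=1}^N P_n((a_k,b_k)) \ge \sum_{k=1}^N P((a_k,b_k)),
\]
and letting $N\to\infty$ gives $\liminf_n P_n(\oo)\ge P(\oo)$, which is condition (ii) of Theorem~\ref{t1}. I expect the main obstacle to be the (ii)$\Rightarrow$(i) direction: one must simultaneously approximate each component from inside by intervals whose endpoints are continuity points of $F$ and then assemble the countably many components, since componentwise convergence $P_n((a_k,b_k))\to P((a_k,b_k))$ is not directly available and must be replaced by the liminf estimate above combined with truncation to finitely many components.
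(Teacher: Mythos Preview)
The paper does not supply a proof of this theorem; it is stated with a citation to Shiryaev's textbook and used as a known result. Your argument is correct and is essentially the standard derivation from the portmanteau theorem (Theorem~\ref{t1}): for (i)$\Rightarrow$(ii) you sandwich $F_n(x)$ using the closed set $(-\infty,x]$ and the open set $(-\infty,x)$ at a continuity point, and for (ii)$\Rightarrow$(i) you decompose an open set into its disjoint interval components, approximate each from inside by half-open intervals with endpoints chosen among the (dense) continuity points of $F$, and then truncate to finitely many components before passing to the limit.

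One small point worth making explicit: for unbounded components with $a_k=-\infty$ or $b_k=+\infty$, the convergences $F(a_k^{(j)})\to 0$ and $F(b_k^{(j)})\to 1$ come from $F$ being a distribution function (i.e., $\lim_{x\to-\infty}F(x)=0$ and $\lim_{x\to+\infty}F(x)=1$) rather than from right-continuity at a finite endpoint; with that convention your limiting step still gives $F(b_k-)-F(a_k)=P((a_k,b_k))$ in all cases.
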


The following theorem provides for setwise convergence the
results in the same spirit as Theorem~\ref{t1} %and \ref{t2}
states for weak convergence.

\begin{theorem}\label{t3}
The following statements are equivalent:

(i) $P_n\Sc P;$

(ii) $\lim_{n\to\infty} P_n(\oo)= P(\oo)$ for each open subset
$\oo\subseteq\S;$

(iii) $\lim_{n\to\infty} P_n(C)= P(C)$ for each closed subset
$C\subseteq \S.$
\end{theorem}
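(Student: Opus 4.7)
The plan is to reduce everything to a regularity argument for Borel probability measures on metric spaces. First I would dispatch the easy implications. The direction (i) $\Rightarrow$ (ii) and (i) $\Rightarrow$ (iii) is immediate from the characterization noted just before the theorem statement, namely that $P_n\Sc P$ is equivalent to $P_n(E)\to P(E)$ for every $E\in\mathcal{B}(\S)$; open and closed sets are in particular Borel. The equivalence (ii) $\Leftrightarrow$ (iii) is a one-line complementation argument: open complements are closed, and $P_n(\S\setminus\oo)=1-P_n(\oo)$.

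The real content is (ii) $\Rightarrow$ (i). The strategy is standard inner/outer approximation: for an arbitrary $E\in\mathcal{B}(\S)$ and $\varepsilon>0$, I would invoke the fact that every Borel probability measure on a metric space is both closed-inner-regular and open-outer-regular, to pick a closed set $C\subseteq E$ and an open set $\oo\supseteq E$ with $P(\oo\setminus C)<\varepsilon$. Using the already-established (ii) and (iii) gives
\[
P(E)-\varepsilon\le P(C)=\lim_n P_n(C)\le \liminf_n P_n(E)\le \limsup_n P_n(E)\le \lim_n P_n(\oo)=P(\oo)\le P(E)+\varepsilon.
\]
Letting $\varepsilon\downarrow 0$ yields $P_n(E)\to P(E)$, and since $E$ was arbitrary Borel, $P_n\Sc P$.

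The only non-routine ingredient is the regularity statement for $P$. I would justify it in one sentence: the collection of Borel sets that can be sandwiched between a closed set and an open set of $P$-measure differing by less than $\varepsilon$ is easily seen to form a $\sigma$-algebra containing all closed sets (since in a metric space each closed set $C$ equals $\bigcap_k\{s:\rho(s,C)<1/k\}$, an intersection of open sets), hence it coincides with $\mathcal{B}(\S)$. This is the only step that uses the metric structure; everything else is abstract measure theory. No other obstacles are anticipated, and the proof should fit in a few lines.
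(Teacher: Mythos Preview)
Your proof is correct and follows essentially the same route as the paper: establish the trivial equivalence (ii)$\Leftrightarrow$(iii) by complementation, then derive (i) from (ii)/(iii) via regularity of Borel probability measures on metric spaces, sandwiching an arbitrary Borel set between a closed set and an open set whose $P$-measures differ by less than $\varepsilon$. The only cosmetic difference is that the paper cites the regularity result from Billingsley and Bogachev rather than sketching the $\sigma$-algebra argument, and writes out the final $\varepsilon$-estimate in two one-sided inequalities rather than your liminf/limsup chain.
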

\begin{proof} If $A$ is open (closed) then its complement $A^c$ is
closed (open), and $Q(A^c)=1-Q(A)$ for each $Q\in\P(\S).$  Thus
statements (ii) and (iii) are equivalent. We prove the equivalence
of (i) and (iii).  Obviously, (i) implies (iii). According to
Billingsley \cite[Theorem~1.1]{Bil} or Bogachev \cite[Theorem
7.1.7]{bogachev}, any probability measure $P$ on a metric space
$\S$ is regular, that is, for each $B\in \B(\S)$ and for each
$\varepsilon>0$ there exist a closed subset $C\subseteq\S$ and an
open subset $\oo\subseteq \S$ such that $C\subseteq B\subseteq
\oo$ and $P(\oo\setminus C)<\varepsilon$.
 Fix arbitrary $B\in \B(\S)$
and $\varepsilon>0$.  Since $P_n(\oo)\to P(\oo)$ and $P_{n}(C)\to
P (C)$, there exists $N=1,2,\ldots,$ such that $|P_n(\oo)-
P(\oo)|<\varepsilon$ and $|P_n(C)-P (C)|<\varepsilon$ for any $n=
N,N+1,\ldots$. Therefore, $P_n(B)-P(B)\le P_n(\oo)- P(B)<
\varepsilon + P(\oo\setminus C)<2\varepsilon$, and $P(B)-P_n(B)\le
P(B)-P_n(C)< \varepsilon+P(\oo\setminus C)<2\varepsilon$, for each
$n=N,N+1,\ldots$. Since $\varepsilon>0$ is arbitrary, the sequence
$\{P_n(B)\}_{n=1,2,\ldots} \subset [0,1]$ converges to $P(B)$ for
any $B\in \B(\S)$, that is, the sequence of probability measures
$\{P_n\}_{n=1,2,\ldots}$ converges setwise to $P\in\P(\S)$.
\end{proof}

According to Bogachev \cite[Theorem 8.10.56]{bogachev}, which is
Pflanzagl's generalization of the
Fichtengolz-Dieudonn\'e-Grothendiek theorem, the statement of
Theorem~\ref{t3} holds for Radon measures. In view of Bogachev
\cite[Theorem 7.1.7]{bogachev}, if $\S$ is complete and separable,
then any probability measure on $(\S,{\mathcal B}(\S))$ is Radon.
%In addition, every metric space can be completed.  If $\S$ is
%separable, let $\tilde \S$ be its completion. By setting
%$\mu({\tilde \S}\setminus \S)=0$, any measure $\mu$ on a separable
%metric space $\S$ can be extended to a measure on the complete
%separable metric space $\tilde\S.$  Thus, if $\S$ is separable,
%then Bogachev \cite[Theorem 8.10.56]{bogachev} implies that the
%statement of Theorem~\ref{t3} holds.
However, Theorem~\ref{t3} does not assume that $\S$ is either
separable or complete.

% In view of Theorem~\ref{t3}, \
 If $P_n\Sc P$, where $P, P_n\in\P(\R^1)$ for all $n=1,2,\ldots,$ then $F_n(x) \to
 F(x)$ and $F_n(x-)\to F(x-)$ for all $x\in\R^1.$  This is true because
$F_n(x) = P_n((-\infty,x])\to P((-\infty,x])= F(x)$ and $F_n(x-) =
P_n((-\infty,x))\to P((-\infty,x))= F(x-)$ as $n\to\infty.$
However, as the following example shows, the convergences  $F_n(x)
\to
 F(x)$ and $F_n(x-)\to F(x-)$ for all $x\in\R^1$ do not imply $P_n\Sc P.$

\begin{example}\label{exa:DFS}(Convergences $F_n(x)
\to
 F(x)$ and $F_n(x-)\to F(x-)$  $\forall x\in\R^1$ do not imply $P_n\Sc P$).
{\rm
Let
\[
F_0(x):=\left\{
\begin{array}{ll}
0,& x<0;\\
x, & 0\le x \le 1;\\
1, & x>1;
\end{array}
\right.
F_{n+1}(x):=\left\{
\begin{array}{ll}
\frac12F_n(3x),& x<\frac13;\\
\frac12, & \frac13 \le x\le \frac23;\\
\frac12F_n(3x-2), & x>\frac23;
\end{array}
\right.
F(x):=\left\{
\begin{array}{ll}
0,& x<0;\\
C(x), & 0 \le x\le 1;\\
1, & x>1;
\end{array}
\right.
\]
where $C(x)$ is the Cantor function and $n=0,1,\ldots\ .$ Note that $F(x)$ and $F_n(x)$, $n=0,1,\ldots,$ are continuous functions and
\[
\max_{x\in\R^1}\left|F(x)-F_n(x) \right|\le 2^{1-n}\max_{x\in\R^1}\left|F_1(x)-F_0(x) \right|,\quad n=1,2,\ldots\ .
\]
Therefore, $F_n(x-)=F_n(x)\to
 F(x)= F(x-)$ for each $ x\in\R^1$. 

Denote by $C\subset [0,1]$ the Cantor set. Since the Lebesgue measure of the Cantor set $C$ equals  zero and each distribution function $F_n$ has a bounded density,  $P_n(C)=0$ for each $n=1,2,\ldots.$ Note that $P(C)=1$ because $P([0,1])=F(1)-F(0)=1$ and $P([0,1]\setminus C)=0$ since $[0,1]\setminus C$ is a union of disjoint open interval each of zero $P$-measure.  Thus, the sequence of probability measures $\{P_n\}_{n=1,2,\ldots}$ does not converges setwise to the probability measure $P$. \hfill$\Box$
}
\end{example}

%
%********************************************************************
%
%TAKE OUT
%
%\begin{theorem}\label{t4}
%For $\S=\R^1$ the following statements are equivalent:
%
%(i) $P_n\Sc P;$
%
%(ii) $F_n(x)\to F(x)$ and  $F_n(x-)\to F(x-)$    for all
%$x\in\R^1.$
%\end{theorem}
%
%\begin{proof}
%
% In view of Theorem~\ref{t3}, if statement~(i) holds then
%$F_n(x) = P_n((-\infty,x])\to P((-\infty,x])= F(x)$ and $F_n(x-) =
%P_n((-\infty,x))\to P((-\infty,x))= F(x-)$ as $n\to\infty.$  Thus,
%(i) implies (ii).
%
%To show that (ii) implies (i), observe that for any open interval
%$(a,b)$ with $a,b\in\bar \R^1,$ where ${\bar
%\R^1}=\R^1\cup\{-\infty,+\infty\}$
%
%
%\end{proof}
%
%TAKE OUT
%
%***************************************************

The third major type of convergence of probability measures,
convergence in the total variation, can be defined via a metric
$\rho_{tv}$  on $\P(\S)$ called the distance in the total variation.  For $P,Q\in \P(\S),$ define
\begin{equation}\label{eqdefdist}
\dist(P,Q):=\sup\left\{|\int_\S f(s)P(ds)-\int_\S f(s)Q(ds)| : \
f:\S\to [-1,1]\mbox{ is Borel-measurable} \right\}. \end{equation}
A sequence of probability measures $\{P_n\}_{n=1,2,\ldots}$ from
$\P(\S)$ converges in the total variation to $P\in\P(\S)$ if
$\lim_{n\to\infty}\dist(P_n,P)=0.$

In view of the Hahn decomposition, there exists $E\in{\cal B}(\S)$
such that $(P-Q)(B)\ge 0$ for each $B\in {\cal B}(E)$ and
$(P-Q)(B)\le 0$ for each $B\in {\cal B}(E^c).$  According to
Shiryaev~\cite[p. 360]{Sh},
\begin{equation}\label{eq2shir}\dist(P,Q)=P(E)-Q(E)+Q(E^c)-P(E^c)=2\sup \{|P(B)-Q(B)|:B\in{\cal B}(\S)\}.\end{equation}
%In view of the Hahn decomposition, there exists $E\in{\cal B}(\S)$
%such that $(P-Q)(B)\ge 0$ for each $B\in {\cal B}(E)$ and
%$(P-Q)(B)\le 0$ for each $B\in {\cal B}(E^c).$  %This implies
%$(P-Q)(E)=\sup \{|P(B)-Q(B)|:B\in{\cal B}(\S)\}.$  Thus
%\begin{equation}\label{eq2shir1}\dist(P,Q)=2\max \{P(B)-Q(B):B\in{\cal B}(\S)\},\end{equation} the supremum
%in(\ref{eqdefdist}) is achieved at the function $f(s)={\bf
%I}\{s\in E\}-{\bf I}\{s\in E^c\},$ and
%\begin{equation}\label{eqdefdist1}\dist(P,Q)=\sup\left\{\int_\S f(s)Pds)-\int_\S f(s)Q(ds) : \
%f:\S\to \{-1,1\}\mbox{ is Borel-measurable}
%\right\}.\end{equation}
This implies that the supremum in (\ref{eqdefdist}) is achieved at
the function $f(s)={\bf I}\{s\in E\}-{\bf I}\{s\in E^c\},$ and
\begin{equation}\label{eqdefdist1}\dist(P,Q)=\sup\left\{\int_\S f(s)P(ds)-\int_\S f(s)Q(ds) : \
f:\S\to \{-1,1\}\mbox{ is Borel-measurable}
\right\}.\end{equation} Since $(P-Q)(\S)=0$, (\ref{eq2shir}) also
implies
\begin{equation}\label{eq2shir1}
\dist(P,Q)=2P(E)-2Q(E)=2Q(E^c)-2P(E^c)= 2\max \{P(B)-Q(B):B\in{\cal B}(\S)\}.
\end{equation}

Consider the positive part $(P-Q)^+$ and negative part $(P-Q)^-$
of $(P-Q)$, that is, $(P-Q)^+(B) = (P-Q)(E\cap B)$ and
$(P-Q)^-(B)=-(P-Q)(E^c\cap B)$ for all %Borel sets $B$, that is,
$B\in\B(\S)$. Both $(P-Q)^+$ and $(P-Q)^-$ are nonnegative finite
measures. As follows from (\ref{eq2shir1}),
\begin{equation}\label{eq:shir3}
\dist(P,Q)=2(P-Q)^+(E)=2(P-Q)^-(E^c).
\end{equation}

The statements of Theorem~\ref{t5}(i,ii) characterize convergence in the total
variation via convergence of the values of the measures on open
and closed subsets in $\S.$  In this respect, these statements are
similar to Theorems~\ref{t1} and \ref{t3}, which provide
characterizations for weak and setwise convergences.
Formula~(\ref{eq2shir}) indicates that convergence in the total
variation can be interpreted as uniform setwise convergence. The same interpretation follows from Theorems~\ref{t3} and \ref{t5}(i, ii).
Theorem~\ref{t5}(iii, iv) indicates that convergence in the total
variation can be also interpreted as uniform weak convergence.

\begin{theorem}\label{t5}  The following equalities hold for $P,Q\in \P(\S)$:

(i) $\dist(P,Q)=2\sup\{|P(C)-Q(C)|:C\  {\rm is\  closed\ in}\
\S\}=2\sup\{P(C)-Q(C):C\  {\rm is\  closed\ in}\ \S\};$

(ii) $\dist(P,Q)=2\sup\{|P(\oo)-Q(\oo)|:\oo\  {\rm is\  open\ in}\
\S\}=2\sup\{P(\oo)-Q(\oo):\oo\  {\rm is\  open\ in}\ \S\};$

(iii) $\dist(P,Q)=\sup\left\{\int_\S f(s)P(ds)-\int_\S f(s)Q(ds) :
\ f:\S\to [-1,1]{\rm\ is\ continuous} \right\};$

(vi) $\dist(P,Q)=\sup\left\{|\int_\S f(s)P(ds)-\int_\S f(s)Q(ds)|
: \ f:\S\to [-1,1]{\rm\ is\ continuous} \right\}.$
\end{theorem}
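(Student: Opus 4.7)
The plan is to combine the Hahn decomposition, already invoked in the excerpt to produce $E\in\B(\S)$ with $\tfrac12\dist(P,Q)=(P-Q)^+(E)=P(E)-Q(E)$, with the regularity of the probability measures $P$ and $Q$ on the metric space $\S$ (Billingsley~\cite[Theorem~1.1]{Bil}, Bogachev~\cite[Theorem~7.1.7]{bogachev}). Throughout I may assume $\dist(P,Q)>0$, the case $P=Q$ being trivial.

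For (i), any closed $C\subseteq\S$ satisfies $P(C)-Q(C)=(P-Q)(C)\le (P-Q)^+(C)\le (P-Q)^+(E)=\tfrac12\dist(P,Q)$, which gives the $\le$ direction. For the reverse, fix $\varepsilon>0$ and use regularity of $P$ and of $Q$ to choose closed $C_1,C_2\subseteq E$ with $P(E\setminus C_1)<\varepsilon$ and $Q(E\setminus C_2)<\varepsilon$; then $C:=C_1\cup C_2\subseteq E$ is closed with $P(E\setminus C)<\varepsilon$ and $Q(E\setminus C)<\varepsilon$, whence $P(C)-Q(C)=[P(E)-Q(E)]-[P(E\setminus C)-Q(E\setminus C)]\ge \tfrac12\dist(P,Q)-\varepsilon$. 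The absolute-value form follows from $\sup_C|P(C)-Q(C)|=\max\{\sup_C(P(C)-Q(C)),\sup_C(Q(C)-P(C))\}$ together with the same estimate applied to $(Q,P)$. Part (ii) is a direct consequence of (i) via complements: $\oo=C^c$ gives $P(\oo)-Q(\oo)=Q(C)-P(C)$, so the suprema over open sets coincide with those over closed sets after swapping $P$ and $Q$.

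For (iii), the bound $\sup_{f\text{ cts}}(\int f\,dP-\int f\,dQ)\le\dist(P,Q)$ is immediate from~(\ref{eqdefdist1}). For the reverse, given $\varepsilon>0$, apply the regularity argument from (i) separately inside $E$ and inside $E^c$ to obtain closed sets $C\subseteq E$, $C'\subseteq E^c$ (automatically disjoint) with $P(E\setminus C)<\varepsilon$ and $Q(E^c\setminus C')<\varepsilon$, and use the metric-space Urysohn function
\[
f(s):=\frac{d(s,C')-d(s,C)}{d(s,C)+d(s,C')},
\]
which is continuous, $[-1,1]$-valued, and satisfies $f=1$ on $C$ and $f=-1$ on $C'$. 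Splitting $\int f\,d(P-Q)=\int_E f\,d(P-Q)^+-\int_{E^c}f\,d(P-Q)^-$ and using $|f|\le 1$ together with $(P-Q)^+\le P$, $(P-Q)^-\le Q$, one gets $\int_E f\,d(P-Q)^+\ge (P-Q)^+(C)-(P-Q)^+(E\setminus C)\ge (P-Q)^+(E)-2P(E\setminus C)$ and, analogously, $-\int_{E^c}f\,d(P-Q)^-\ge (P-Q)^-(E^c)-2Q(E^c\setminus C')$, whence $\int f\,d(P-Q)\ge \dist(P,Q)-2P(E\setminus C)-2Q(E^c\setminus C')\ge \dist(P,Q)-4\varepsilon$.

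Finally, (iv) follows from (iii) in one line: $\sup_{f\text{ cts}}|\int f\,d(P-Q)|\ge\sup_{f\text{ cts}}\int f\,d(P-Q)=\dist(P,Q)$ by (iii), while the reverse $\le$ is immediate because the supremum in~(\ref{eqdefdist}) is taken over the larger class of Borel-measurable $[-1,1]$-valued functions. The main technical step is the construction in (iii): producing a single continuous $[-1,1]$-valued function which is close to $\I\{\cdot\in E\}-\I\{\cdot\in E^c\}$ in the relevant sense, which requires simultaneously approximating $E$ and $E^c$ from the inside by closed sets (via regularity) and then invoking the explicit Urysohn-type formula above, available in any metric space.
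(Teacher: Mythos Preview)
Your proof is correct and follows essentially the same strategy as the paper's: Hahn decomposition to isolate $E$, inner regularity on a metric space to approximate $E$ and $E^c$ by closed sets, and a Urysohn-type continuous separator for part~(iii). The only cosmetic differences are that the paper invokes regularity of the finite measures $(P-Q)^{\pm}$ rather than of $P$ and $Q$ separately, and appeals to the Tietze--Urysohn extension theorem instead of writing down the explicit formula $f(s)=\dfrac{d(s,C')-d(s,C)}{d(s,C)+d(s,C')}$; neither change affects the argument.
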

\begin{proof}
(i) It is sufficient to show that
\begin{equation}\label{eq:tv(i,ii)}
\dist(P,Q)\le 2\sup\{P(C)-Q(C):C\  {\rm is\  closed\ in}\ \S\}.
\end{equation}  Since $(P-Q)^+$ is a measure   on a
metric space, it is regular; Billingsley \cite[Theorem~1.1]{Bil}
or Bogachev \cite[Theorem 7.1.7]{bogachev}. Thus, for $E\in
\B(\S)$ satisfying (\ref{eq:shir3}) and for each $\varepsilon>0$ there
exists a closed subset $C\subseteq \S$ such that $C\subseteq E$
and $2(P-Q)^+(E\setminus C)<\varepsilon$. %Therefore, in view of
%(\ref{eq2shir1}) and (\ref{eq:shir3}),
Due to $C\subseteq E,$ the equality $(P-Q)(C)=(P-Q)^+(C)$ holds.
 Therefore, in view of
%(\ref{eq2shir1}) and
(\ref{eq:shir3}),
\[
\dist(P,Q)< 2(P-Q)^+(C)+\varepsilon\le 2\sup\{P(C)-Q(C):C\ {\rm
is\  closed\ in}\ \S\}+\varepsilon.
\]
Since $\varepsilon>0$ is an arbitrary, inequality (\ref{eq:tv(i,ii)}) holds. %The converse inequality holds, because any open set in $\S$ is Borel. Therefore, statement (i) holds.

(ii) Since of $\dist(P,Q)=\dist(Q,P)$ and
\[
\sup\{P(C)-Q(C):C\  {\rm is\  closed\ in}\ \S\}=\sup\{Q(\oo)-P(\oo):\oo\  {\rm is\  open\ in}\ \S\},
\]
(i) implies  (ii).

(iii) In view of (\ref{eqdefdist1}), it is sufficient to show that
\begin{equation}\label{eq:tv(iii)}
\dist(P,Q)\le \sup\left\{\int_\S f(s)P(ds)-\int_\S f(s)Q(ds)
: \ f:\S\to [-1,1]{\rm\ is\ continuous} \right\}.
\end{equation}

Since the supremum in (\ref{eqdefdist}) is achieved at the
function $f_{E,E^c}(s)={\bf I}\{s\in E\}-{\bf I}\{s\in E^c\},$
\begin{equation}\label{eq:tv(iii)(1)}
\dist(P,Q)=\int_\S f_{E,E^c}(s)(P-Q)(ds).
\end{equation}

Since of $(P-Q)^+$ and $(P-Q)^-$ are measures on a metric space,
they are regular; Billingsley \cite[Theorem~1.1]{Bil} or Bogachev
\cite[Theorem 7.1.7]{bogachev}. Thus, for $E, E^c\in \B(\S)$ and
for each $\varepsilon>0,$ there exist closed subsets
$C_1,C_2\subseteq \S$ such that $C_1\subseteq E$, $C_2\subseteq
E^c$, and $(P-Q)^+(E\setminus C_1)+(P-Q)^-(E^c\setminus
C_2)<\varepsilon$. Therefore,
\begin{equation}\label{eq:tv(iii)(2)}
\int_\S f_{E,E^c}(s)(P-Q)(ds)\le \int_\S f_{C_1,C_2}(s)(P-Q)(ds)+\varepsilon,
\end{equation}
where $f_{C_1,C_2}(s)={\bf I}\{s\in C_1\}-{\bf I}\{s\in C_2\}$,
$s\in \S$. Note that the restriction of $f_{C_1,C_2}$ on a closed
subset $C_1\cup C_2$ in $\S$ is continuous. Since a metric space
is a normal topological space, Tietze-Urysohn-Brouwer extension
theorem implies the existence of a continuous extension of
$f_{C_1,C_2}$ on $\S$, that is, there is a continuous function
$\tilde{f}_{C_1,C_2}:\S\to [-1,1]$ such that
$\tilde{f}_{C_1,C_2}(s)=f_{C_1,C_2}(s)$ for any $s\in C_1\cup
C_2$. Thus,
\begin{equation}\label{eq:tv(iii)(3)}
\int_\S f_{C_1,C_2}(s)(P-Q)(ds)\le \int_\S \tilde{f}_{C_1,C_2}(s)(P-Q)(ds)+\varepsilon.
\end{equation}

According to %inequalities
(\ref{eq:tv(iii)(1)})--(\ref{eq:tv(iii)(3)}), for any
$\varepsilon>0$
\[
\dist(P,Q)\le \sup\left\{\int_\S f(s)P(ds)-\int_\S f(s)Q(ds)
: \ f:\S\to [-1,1]{\rm\ is\ continuous} \right\}+2\varepsilon,
\]
which yields inequality (\ref{eq:tv(iii)}).

(iv) According to (iii) and the definition of $\dist(P,Q)$,
\[
\begin{aligned}
&\dist(P,Q)=\sup\left\{\int_\S f(s)P(ds)-\int_\S f(s)Q(ds)
: \ f:\S\to [-1,1]{\rm\ is\ continuous} \right\}\le \\
&\sup\left\{|\int_\S f(s)P(ds)-\int_\S f(s)Q(ds)|
: \ f:\S\to [-1,1]{\rm\ is\ continuous} \right\}\le \dist(P,Q),
\end{aligned}
\]
which implies (iv).
\end{proof}

For a function $f$ on $\R$, let $V(f)$ denote its total variation.
Let $P_i,$ $i=1,2,$ be probability measures on $(\R^1,\B(\R^1)),$ and $F_i(x)=P_i\{(-\infty,x]\},$ $x\in\R^1,$  be the corresponding distribution functions. %For $\S=\R^1$
The following well-known statement characterizes
convergence in the total variation in terms of convergence of
distribution functions.% This statement means that the total
%variation of a finite sign measure on a metric space is a natural
%generalization of the total variation of a function with bounded
%variation.

\begin{theorem}\label{t:totvar2} {\rm (Cohn \cite[Exercise~6, p.~137]{Cohn}).} $\dist(P_1,P_2)=V(F_1-F_2)$ for
all $P_1,P_2\in\P(\R^1).$
\end{theorem}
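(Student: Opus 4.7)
The plan is to pass from distribution functions to the signed Borel measure $\mu := P_1 - P_2$ and to show that both sides equal $|\mu|(\R^1)$. By the Hahn--Jordan decomposition used to derive (\ref{eq:shir3}), and since $\mu(\R^1)=0$ forces $\mu^+(\R^1)=\mu^-(\R^1)$, one has $\dist(P_1,P_2)=2\mu^+(\R^1)=|\mu|(\R^1)$ with $|\mu|:=\mu^++\mu^-$. The identity $F_i(x)=P_i((-\infty,x])$ yields, for every finite partition $x_0<x_1<\cdots<x_n$ of $\R^1$,
\[
\sum_{i=1}^n\bigl|(F_1-F_2)(x_i)-(F_1-F_2)(x_{i-1})\bigr|=\sum_{i=1}^n\bigl|\mu((x_{i-1},x_i])\bigr|,
\]
so $V(F_1-F_2)$ is the supremum of such sums over finite partitions.

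For the inequality $V(F_1-F_2)\le\dist(P_1,P_2)$, I would use $|\mu(A)|\le|\mu|(A)$ together with the disjointness of the subintervals: the right-hand side of the display is bounded by $|\mu|((x_0,x_n])\le|\mu|(\R^1)=\dist(P_1,P_2)$, and the sup over partitions inherits the bound.

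For the reverse inequality, fix $\varepsilon>0$ and construct a partition whose variation sum is close to $|\mu|(\R^1)$. Let $E\in\B(\R^1)$ be a Hahn set for $\mu$ as in the derivation of (\ref{eq2shir1}); then $\mu(E)=\mu^+(\R^1)=\tfrac12|\mu|(\R^1)$. Since the algebra of finite disjoint unions of half-open intervals $(a,b]$ generates $\B(\R^1)$ and $|\mu|$ is a finite measure, the standard measure-theoretic approximation theorem supplies $U=\bigsqcup_{j=1}^k(a_j,b_j]$ with $|\mu|(E\triangle U)<\varepsilon$. Choose $M>0$ so large that $|\mu|(\R^1\setminus(-M,M])<\varepsilon$ and all $a_j,b_j$ lie in $(-M,M)$; by merging intervals that share an endpoint I can arrange $-M<a_1<b_1<a_2<\cdots<b_k<M$. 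Take the partition $\pi$ consisting of these $2k+2$ points. Its subintervals alternate between the components of $U$ and those of $(-M,M]\setminus U$, so by the triangle inequality the variation sum along $\pi$ is at least
\[
\bigl|\mu(U)\bigr|+\bigl|\mu((-M,M]\setminus U)\bigr|.
\]
Using $|\mu(U)-\mu(E)|\le|\mu|(E\triangle U)<\varepsilon$ together with $\mu(U)+\mu((-M,M]\setminus U)=-\mu(\R^1\setminus(-M,M])$ (whose absolute value is below $\varepsilon$), the first term is at least $\mu(E)-\varepsilon$ and the second at least $\mu(E)-2\varepsilon$, so the sum exceeds $2\mu(E)-3\varepsilon=\dist(P_1,P_2)-3\varepsilon$. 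Letting $\varepsilon\downarrow 0$ yields $V(F_1-F_2)\ge\dist(P_1,P_2)$.

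The main obstacle is the approximation of the Hahn set $E$ by a finite disjoint union of half-open intervals, but this is standard, following from Halmos-style approximation based on the fact that such unions form an algebra generating $\B(\R^1)$. Everything else is triangle-inequality bookkeeping once $\pi$ is in place.
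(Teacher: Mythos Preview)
The paper does not supply a proof of Theorem~\ref{t:totvar2}; it is stated as a known fact with a reference to Cohn~\cite[Exercise~6, p.~137]{Cohn}. So there is no paper-proof to compare against, and the question reduces to whether your argument stands on its own.

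Your proof is correct. The identification $\dist(P_1,P_2)=|\mu|(\R^1)$ via (\ref{eq:shir3}) and $\mu(\R^1)=0$ is exactly right, and the easy inequality $V(F_1-F_2)\le|\mu|(\R^1)$ follows as you say from $|\mu(A)|\le|\mu|(A)$ and disjointness. For the reverse inequality your approximation argument is sound: the algebra of finite disjoint unions of half-open intervals generates $\B(\R^1)$, so the Halmos approximation lemma gives $U$ with $|\mu|(E\triangle U)<\varepsilon$; the triangle-inequality estimate for the variation sum along the induced partition is clean, and the chain $|\mu((-M,M]\setminus U)|\ge|\mu(U)|-|\mu(\R^1\setminus(-M,M])|\ge\mu(E)-2\varepsilon$ is correctly derived from $\mu((-M,M])=-\mu(\R^1\setminus(-M,M])$. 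One cosmetic point: you might state explicitly that $(F_1-F_2)(x)\to0$ as $x\to\pm\infty$, so that restricting to partitions of bounded intervals $(-M,M]$ loses nothing in the supremum defining $V(F_1-F_2)$; this is implicit in your choice of $M$ but worth a sentence.
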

%\begin{proof}
%See, for example, Cohn \cite[Exercise~6, p.~137]{Cohn}.
%\end{proof}

\section{Sufficient Conditions for Weak and Setwice Convergence}\label{S3}

\begin{lemma}\label{l:1} Let
$\{P_n\}_{n=1,2,\ldots}$ be a sequence of probability measures
from $\P(\S)$ and $P\in\P(\S)$.  If for a measurable subset $B$ of
$\S$ there is a countable sequence of measurable subsets
$B_1,B_2,\ldots$ of $B$ such that:

(i) $B=\cup_{i=1}^\infty B_j,$

(ii) $\liminf_{n\to\infty}P_n(\cup_{j=1}^k B_j)\ge P(\cup_{j=1}^k
B_j)$  for all $k=1,2,\ldots,$

 \noindent then
\begin{equation}\label{eq3.1nn}\liminf_{n\to\infty} P_n(B)\ge P(B).\end{equation}
\end{lemma}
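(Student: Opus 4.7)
The plan is to reduce the statement to a familiar continuity-from-below argument for the limit measure $P$, combined with monotonicity of each $P_n$.

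First I would introduce the partial unions $A_k := \bigcup_{j=1}^{k} B_j$ for $k = 1, 2, \ldots$. By hypothesis (i), the sets $A_k$ form an increasing sequence with $\bigcup_{k=1}^\infty A_k = B$. Continuity of the probability measure $P$ from below therefore gives $P(A_k) \uparrow P(B)$ as $k \to \infty$. The role of hypothesis (ii) is exactly to control $\liminf_n P_n$ on these partial unions.

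Next, fix an arbitrary $\varepsilon > 0$ and choose $k$ large enough that $P(A_k) > P(B) - \varepsilon$ (if $P(B) = \infty$ were possible one would need to truncate, but here all measures are probabilities, so $P(B) \le 1$ and this choice is immediate). Since $A_k \subseteq B$, monotonicity of each $P_n$ yields $P_n(B) \ge P_n(A_k)$ for every $n$, whence
\[
\liminf_{n\to\infty} P_n(B) \;\ge\; \liminf_{n\to\infty} P_n(A_k) \;\ge\; P(A_k) \;>\; P(B) - \varepsilon,
\]
using hypothesis (ii) in the middle inequality. Letting $\varepsilon \downarrow 0$ produces (\ref{eq3.1nn}).

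There is no real obstacle here: the argument is a two-line combination of (a) countable additivity of $P$ applied to an increasing sequence and (b) the elementary fact that $\liminf$ respects the pointwise ordering $P_n(A_k)\le P_n(B)$. The only small point worth flagging is that one must pass to finite unions $A_k$ before taking $\liminf_n$, because assumption (ii) is given only for finite unions; one cannot hope to apply it directly to the infinite union $B$, since that is the conclusion being proved.
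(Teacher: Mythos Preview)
Your proof is correct and essentially identical to the paper's own argument: both pick, for given $\varepsilon>0$, a finite index $k$ with $P(\cup_{j=1}^{k}B_j)\ge P(B)-\varepsilon$, then chain the inequalities $\liminf_n P_n(B)\ge \liminf_n P_n(\cup_{j=1}^{k}B_j)\ge P(\cup_{j=1}^{k}B_j)\ge P(B)-\varepsilon$ and let $\varepsilon\downarrow 0$. You have merely made explicit the continuity-from-below and monotonicity steps that the paper leaves implicit.
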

\begin{proof}
For an arbitrary $\epsilon>0$ consider an integer $k(\epsilon)$
such that $P(\cup_{j=1}^{k(\epsilon)} B_j)\ge P(B)-\epsilon.$ Then
\[ \liminf_{n\to\infty}
P_n(B)\ge \liminf_{n\to\infty} P_n(\cup_{j=1}^{k(\epsilon)}
B_j)\ge P(\cup_{j=1}^{k(\epsilon)} B_j) \ge P(B)-\epsilon. \]
Since $\epsilon>0$ is arbitrary, inequality (\ref{eq3.1nn})
holds.\end{proof}

\begin{corollary}\label{c:1}
Let $\{P_n\}_{n=1,2,\ldots}$ be a sequence of probability measures
from $\P(\S)$ and $P\in\P(\S)$.  If for a each open subset $\oo$
of $\S$ there is a countable sequence of measurable subsets
$B_1,B_2,\ldots$ of $\oo$ such that:

(i) $\oo=\cup_{i=1}^\infty B_j,$

(ii) $\liminf_{n\to\infty}P_n(\cup_{j=1}^k B_j)\ge P(\cup_{j=1}^k
B_j)$  for all $k=1,2,\ldots,$

 \noindent then \noindent then $P_n\Wc P$.
\end{corollary}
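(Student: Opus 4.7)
The plan is essentially to reduce this to Lemma~\ref{l:1} together with Theorem~\ref{t1}, since the hypotheses have been tailored precisely to fit those two statements in sequence. There is no real obstacle here; the corollary is a straightforward packaging of the lemma.

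First, I would fix an arbitrary open subset $\mathcal{O}\subseteq\mathbb{S}$. The hypothesis supplies a countable sequence $B_1,B_2,\ldots$ of measurable subsets of $\mathcal{O}$ satisfying exactly conditions (i) and (ii) of Lemma~\ref{l:1} with $B=\mathcal{O}$. Applying Lemma~\ref{l:1} directly then yields
\[
\liminf_{n\to\infty} P_n(\mathcal{O})\ge P(\mathcal{O}).
\]

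Second, since $\mathcal{O}$ was an arbitrary open subset of $\mathbb{S}$, the criterion in Theorem~\ref{t1}(ii) is satisfied, and therefore $P_n\Wc P$ by the equivalence $(i)\Leftrightarrow(ii)$ of Theorem~\ref{t1}. This completes the proof; no extra bookkeeping or approximation is needed beyond invoking the two earlier results.
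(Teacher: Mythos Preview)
Your proof is correct and follows the same approach as the paper: apply Lemma~\ref{l:1} to each open set to get $\liminf_{n\to\infty} P_n(\mathcal{O})\ge P(\mathcal{O})$, then invoke Theorem~\ref{t1}(ii) to conclude weak convergence.
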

\begin{proof}
In view of Lemma~\ref{l:1}, $\liminf_{n\to\infty} P_n(\oo)\ge
P(\oo)$ for all open subsets $\oo$ of $\S.$  In view of Theorem~\ref{t1}, this is equivalent to
$P_n\Wc P$.
\end{proof}

\begin{theorem}\label{t:1} Let
$\{P_n\}_{n=1,2,\ldots}$ be a sequence of probability measures
from $\P(\S)$ and $P\in\P(\S)$. If the topology on $\S$ has a
countable base $\tau_b,$ then $P_n\Wc P$ if and only if
$\liminf_{n\to\infty}P_n(\oo^*)\ge P(\oo^*)$ for each finite union
$\oo^*=\cup_{i=1}^k {\oo}_{i}$ with
$\oo_{i}\in\tau_b,$ $k=1,2,\ldots\ .$ % the sequence
%$\{P_{n}(\oo)\}_{n=1,2,\ldots}\subset [0,1]$ converges to $P
%(\oo)$,
%
\end{theorem}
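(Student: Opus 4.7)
The plan is to reduce the theorem to a direct application of Corollary~\ref{c:1}, using the countable base $\tau_b$ to build, for each open set $\oo$, the required exhausting sequence of subsets $B_j$.

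The forward direction is essentially immediate. If $P_n \Wc P$, then Theorem~\ref{t1}(ii) gives $\liminf_{n\to\infty} P_n(\oo) \ge P(\oo)$ for every open subset of $\S$. Since any finite union $\oo^* = \cup_{i=1}^k \oo_i$ of basic open sets is itself open, the required inequality holds. So the content is entirely in the reverse direction.

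For the reverse direction, I would fix an arbitrary open set $\oo \subseteq \S$. Because $\tau_b$ is a base, there exists a (possibly countable) subfamily $\{U_j\}_{j \ge 1} \subseteq \tau_b$ with $U_j \subseteq \oo$ for all $j$ and $\oo = \cup_{j=1}^\infty U_j$; here countability follows from the countability of $\tau_b$ itself. Set $B_j := U_j$. Then $B_j$ are measurable subsets of $\oo$, condition (i) of Corollary~\ref{c:1} holds by construction, and condition (ii) reads $\liminf_{n\to\infty} P_n(\cup_{j=1}^k U_j) \ge P(\cup_{j=1}^k U_j)$ for each $k=1,2,\ldots\ ,$ which is precisely the hypothesis of the theorem applied to the finite union $\oo^* = \cup_{j=1}^k U_j$ of elements of $\tau_b$. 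Hence Corollary~\ref{c:1} applies and yields $P_n \Wc P$.

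There is no real obstacle; the only point worth verifying carefully is that every open $\oo$ in a space with a countable base can be written as a countable union of basic open sets contained in $\oo$, which is a standard property of bases. The statement of the theorem is in fact essentially a packaging of Corollary~\ref{c:1} in the presence of a countable base, because the base lets us replace ``arbitrary measurable subsets $B_j$'' by ``basic open sets''. I do not anticipate needing any additional input beyond Theorem~\ref{t1} and Corollary~\ref{c:1}.
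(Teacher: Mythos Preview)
Your proposal is correct and follows essentially the same route as the paper: the forward direction is immediate from Theorem~\ref{t1}, and the reverse direction is obtained by writing an arbitrary open set as a countable union of basic open sets and invoking Corollary~\ref{c:1}. The paper's argument is more terse but identical in substance.
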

\begin{proof}  Since $P_n\Wc P$ if an only if $\liminf_{n\to\infty}P_n(\oo)\ge P(\oo)$ for each
open $\oo\subseteq\S,$ the necessary condition is obvious.  The
sufficient part follows from Corollary~\ref{c:1}, because any open
subset $\oo$ of $\S$ can be represented as
$\oo^*=\cup_{i=1}^\infty {\oo}_{i}$ with $\oo_{i}\in\tau_b,$
$i=1,2,\ldots\ .$
\end{proof}

Lemma~\ref{l:1} can be used to formulate the following criterion
for setwise convergence.

\begin{lemma}\label{l:2} Let
$\{P_n\}_{n=1,2,\ldots}$ be a sequence of probability measures
from $\P(\S)$ and $P\in\P(\S)$.  Then the following statements
hold:

(i) If for a measurable subset $C$ of $\S,$   both sets $B=C$
and $B=C^c$, where $C^c=\S\setminus C$ is the complement of $C,$
satisfy the conditions of Lemma~\ref{l:1}, then $P_n(C)\to P(C).$

(ii) If for each open subset $\oo\subseteq\S,$    both sets
$B=\oo$ and its complement $B=\oo^c$ satisfy conditions (i) and (ii) of
Lemma~\ref{l:1}, then $P_n\Sc P.$
\end{lemma}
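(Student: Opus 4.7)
The plan is to derive both parts as direct consequences of Lemma~\ref{l:1} combined, for part (ii), with Theorem~\ref{t3}. The whole statement is really about converting the one-sided $\liminf$ bound of Lemma~\ref{l:1} into a two-sided limit by applying the lemma simultaneously to a set and its complement.

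First I would prove (i). By hypothesis, Lemma~\ref{l:1} applies to $B=C$, giving $\liminf_{n\to\infty}P_n(C)\ge P(C)$. It also applies to $B=C^c$, giving $\liminf_{n\to\infty}P_n(C^c)\ge P(C^c)$. Since $P_n$ and $P$ are probability measures on $\S$, one has $P_n(C^c)=1-P_n(C)$ and $P(C^c)=1-P(C)$, so the second inequality is equivalent to $\limsup_{n\to\infty}P_n(C)\le P(C)$. Combining the two inequalities yields $\lim_{n\to\infty}P_n(C)=P(C)$.

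For (ii), I would apply (i) with $C=\oo$ for each open subset $\oo\subseteq\S$; the hypothesis is exactly that both $\oo$ and $\oo^c$ satisfy the conditions of Lemma~\ref{l:1}, so (i) gives $P_n(\oo)\to P(\oo)$ for every open $\oo\subseteq\S$. By the equivalence of (i) and (ii) in Theorem~\ref{t3}, this implies $P_n\Sc P$.

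There is no real obstacle here; the statement is essentially a packaging of Lemma~\ref{l:1} together with Theorem~\ref{t3}. The only point requiring minor care is to make explicit the passage from the $\liminf$ bound on the complement to the $\limsup$ bound on the set itself, which is where the assumption that $P_n$ and $P$ are probability (rather than merely finite) measures is used.
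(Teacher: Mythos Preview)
Your proposal is correct and follows essentially the same argument as the paper: apply Lemma~\ref{l:1} to both $C$ and $C^c$, use that $P_n$ and $P$ are probability measures to turn the two $\liminf$ inequalities into a limit, and then for part~(ii) invoke Theorem~\ref{t3}. The only difference is that you spell out the $\limsup$ step explicitly, which the paper leaves implicit.
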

\begin{proof}
(i) Lemma~\ref{l:1} implies that $\liminf_{n\to\infty} P_n(C)\ge
P(C)$ and  $\liminf_{n\to\infty} P_n(C^c)\ge P(C^c).$  Since $P$
and $P_n,$ $n=1,2,\ldots$ are probability measures,
$\lim_{n\to\infty} P_n(C)= P(C).$  (ii) In view of (i),
$P_n(\oo)\to P(\oo)$ for each open subset $\oo$ of $\S.$  In view
of Theorem~\ref{t3}, $P_n\Sc P.$
\end{proof}

For setwise convergence the following theorem states the
conditions similar to the conditions of
Theorem~\ref{t:1} for weak convergence.

\begin{theorem}\label{t:2} Let
$\{P_n\}_{n=1,2,\ldots}$ be a sequence of probability measures
from $\P(\S)$ and $P\in\P(\S)$. If the topology on $\S$ has a
countable base $\tau_b,$ then $P_n\Sc P$ if and only if the
following two conditions hold:

(i) $\liminf_{n\to\infty}P_n(\oo^*)\ge P(\oo^*)$ for each finite
union $\oo^*=\cup_{i=1}^ k {\oo}_{i}$, where $\oo_{i}\in\tau_b,$
$k=1,2,\ldots;$

(ii) each closed subset $B\subseteq \S$ satisfies  conditions (i) and (ii)
of Lemma~\ref{l:1}.
%
% the sequence
%$\{P_{n}(\oo)\}_{n=1,2,\ldots}\subset [0,1]$ converges to $P
%(\oo)$,
%
\end{theorem}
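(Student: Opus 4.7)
The plan is to prove the two directions separately, leaning heavily on Lemma~\ref{l:2}(ii), which reduces setwise convergence to checking that for each open subset $\oo\subseteq \S$ both $\oo$ and $\oo^c$ satisfy the two bullets of Lemma~\ref{l:1}. This reduction is exactly why assumptions (i) and (ii) of the theorem were set up the way they are: one deals with open sets, the other with closed sets (which are the complements of open sets).

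For \emph{necessity}, I would assume $P_n\Sc P$ and invoke Theorem~\ref{t3} to get $P_n(A)\to P(A)$ for all $A\in\B(\S)$. Condition (i) is then immediate since each finite union of base elements is open. For condition (ii), given a closed set $B$, I would take the trivial decomposition $B_1=B$, $B_j=\emptyset$ for $j\ge 2$ (or simply $B_j=B$ for all $j$); then $\cup_{j=1}^k B_j=B$ and $\liminf_n P_n(B)=P(B)\ge P(B)$, so both bullets of Lemma~\ref{l:1} hold trivially.

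For \emph{sufficiency}, I would fix an arbitrary open $\oo\subseteq\S$ and verify that both $\oo$ and $\oo^c$ satisfy conditions (i) and (ii) of Lemma~\ref{l:1}; Lemma~\ref{l:2}(ii) will then give $P_n\Sc P$. The set $\oo^c$ is closed, so by assumption (ii) of the theorem it already satisfies the hypotheses of Lemma~\ref{l:1}. For $\oo$ itself, I would use the countable base: write $\oo=\cup_{i=1}^\infty \oo_i$ with $\oo_i\in\tau_b$ and set $B_i:=\oo_i$. Then $\oo=\cup_{i=1}^\infty B_i$, and for each $k$ the set $\cup_{j=1}^k B_j=\cup_{j=1}^k \oo_j$ is a finite union of base elements, so assumption (i) of the theorem gives $\liminf_n P_n(\cup_{j=1}^k B_j)\ge P(\cup_{j=1}^k B_j)$, which is exactly bullet (ii) of Lemma~\ref{l:1}.

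There is no real obstacle here; the only subtle point is recognizing that assumption (i) of the theorem is phrased in terms of \emph{finite unions} of base elements rather than single base elements precisely so that the truncations $\cup_{j=1}^k B_j$ in Lemma~\ref{l:1}(ii) are covered, and that Lemma~\ref{l:2}(ii) lets one reduce setwise convergence to one-sided $\liminf$ estimates on open sets plus a dual estimate on their complements (which are closed and thus handled by assumption (ii)). Once this structure is in place, the argument is a direct assembly of Lemmas~\ref{l:1} and \ref{l:2}.
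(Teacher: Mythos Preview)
Your proof is correct and follows essentially the same route as the paper: for sufficiency, the paper obtains $\liminf_n P_n(\oo)\ge P(\oo)$ for open $\oo$ via condition~(i) (through Theorem~\ref{t:1}/Theorem~\ref{t1}) and $\liminf_n P_n(\oo^c)\ge P(\oo^c)$ via condition~(ii) and Lemma~\ref{l:1}, then concludes by Theorem~\ref{t3}, which is exactly what your invocation of Lemma~\ref{l:2}(ii) unpacks to. You additionally spell out the (trivial) necessity direction, which the paper omits.
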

\begin{proof}
Let $\oo$ be an arbitrary open subset of $\S.$  In view of (i),
Theorem~\ref{t1} implies that $\liminf_{n\to\infty} P_n(\oo)\ge P(\oo).$  In
view of (ii), Lemma~\ref{l:1}  implies that  $\liminf_{n\to\infty}
P_n(\oo^c)\ge P(\oo^c).$  Thus $\lim_{n\to\infty} P_n(\oo)= P(\oo).$  Since
$\oo$ is an arbitrary open subset of $\S,$  Theorem~\ref{t3}
implies that $P_n\Sc P.$
\end{proof}

In some applications, it is more convenient to verify convergence
of probabilities for intersections of events than for unions of
events.  The following lemma links the convergence of probabilities
for intersections and unions of events.

\begin{lemma}\label{l:3}  Let ${\cal L}=\{B_1,\ldots,B_N\}$ be a finite  collection of measurable subsets
of $\S.$  Then \[\lim_{n\to\infty} P_n(\cap_{B_i\in {\cal
L}'}B_i)\to P(\cap_{B_i\in {\cal L}'}B_i)\] for all the subsets
${\cal L}'\subseteq {\cal L}$ if and only if
 \[\lim_{n\to\infty} P_n(\cup_{B_i\in {\cal
L}'}B_i)\to P(\cup_{B_i\in {\cal L}'}B_i)\] for all the subsets
${\cal L}'\subseteq {\cal L}$
\end{lemma}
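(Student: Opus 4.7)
The plan is to reduce both implications to the inclusion--exclusion principle, once in its standard form and once in a dual form. Recall that for any events $A_1,\ldots,A_m$ and any probability measure $Q$,
\[
Q(\cup_{i=1}^m A_i)=\sum_{\emptyset\ne J\subseteq\{1,\ldots,m\}}(-1)^{|J|+1}Q(\cap_{i\in J}A_i).
\]
I will also need the dual identity
\[
Q(\cap_{i=1}^m A_i)=\sum_{\emptyset\ne J\subseteq\{1,\ldots,m\}}(-1)^{|J|+1}Q(\cup_{i\in J}A_i),
\]
which can be derived by writing $Q(\cap A_i)=1-Q(\cup A_i^c)$, applying standard inclusion--exclusion to $\{A_i^c\}$, using $\cap_{i\in J}A_i^c=(\cup_{i\in J}A_i)^c$, and then simplifying with the binomial identity $\sum_{k=1}^{m}(-1)^{k+1}\binom{m}{k}=1$.

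For the direction ``intersections $\Rightarrow$ unions'', fix any $\mathcal{L}'\subseteq\mathcal{L}$, apply the standard inclusion--exclusion formula to $P_n$ and to $P$ for the family $\mathcal{L}'$, and observe that each set $\cap_{B_i\in J}B_i$ appearing on the right-hand side corresponds to some subset $J\subseteq\mathcal{L}'\subseteq\mathcal{L}$, so by hypothesis $P_n(\cap_{B_i\in J}B_i)\to P(\cap_{B_i\in J}B_i)$. Since the sum is finite, passage to the limit yields $P_n(\cup_{B_i\in\mathcal{L}'}B_i)\to P(\cup_{B_i\in\mathcal{L}'}B_i)$. For the converse direction ``unions $\Rightarrow$ intersections'', apply the dual identity in exactly the same way: for each $\mathcal{L}'\subseteq\mathcal{L}$, expand $P_n(\cap_{B_i\in\mathcal{L}'}B_i)$ and $P(\cap_{B_i\in\mathcal{L}'}B_i)$ as finite signed sums of probabilities of unions $\cup_{B_i\in J}B_i$ with $J\subseteq\mathcal{L}'\subseteq\mathcal{L}$, invoke the hypothesis on each such union, and pass to the limit in the finite sum.

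There is no real obstacle here beyond recording the dual inclusion--exclusion identity, since $|\mathcal{L}|=N$ is finite and only finite linear combinations are involved, so convergence trivially passes through the sum. The main point to state cleanly is that both directions are the same argument applied to identities that are formally symmetric in $\cup$ and $\cap$.
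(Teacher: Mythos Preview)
Your proof is correct and follows the same core idea as the paper, namely the inclusion--exclusion principle. For the reverse implication the paper invokes inclusion--exclusion together with an induction on $|\mathcal{L}|$, whereas you write down the dual identity $Q(\cap_i A_i)=\sum_{\emptyset\ne J}(-1)^{|J|+1}Q(\cup_{i\in J}A_i)$ explicitly and apply it directly; this is a slightly cleaner packaging of the same argument, since the dual identity itself can be viewed as what the induction would produce once unwound.
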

\begin{proof}  If the convergence holds for intersections, it
holds for unions because of the inclusion-exclusion principle. If
the convergence holds for unions, it holds for intersections
because of the inclusion-exclusion principle and induction in the
number of sets in $\cal L$.

\end{proof}

The following two statements follow from Corollary~\ref{c:1} and
Theorem~\ref{t:1} respectively.

\begin{corollary}\label{c:2} Let $\{P_n\}_{n=1,2,\ldots}$ be a sequence of probability measures
from $\P(\S)$ and $P\in\P(\S)$.  If for a each open subset $\oo$
of $\S$ there is a  sequence of measurable subsets
$B_1,B_2,\ldots$ of $\oo$ such that:

(i) $\oo=\cup_{i=1}^\infty B_j,$

(ii) $\lim_{n\to\infty}P_n(\cap_{j=1}^k B_{i_j})= P(\cap_{j=1}^k
B_{i_j})$  for all $\{B_{i_1},B_{i_2},\ldots,B_{i_k}\}\subseteq\{B_1,B_2,\ldots\},$ $k=1,2,\ldots,$
 then $P_n\Wc P$.

\end{corollary}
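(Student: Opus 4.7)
The plan is to derive Corollary~\ref{c:2} by feeding the hypotheses into Corollary~\ref{c:1}, using Lemma~\ref{l:3} as the bridge that turns information about intersections into information about unions.

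First I would fix an arbitrary open subset $\oo \subseteq \S$ and take the sequence $B_1, B_2, \ldots$ of measurable subsets of $\oo$ guaranteed by the hypothesis, so that $\oo = \cup_{i=1}^\infty B_i$ and $\lim_{n\to\infty} P_n(\cap_{j=1}^k B_{i_j}) = P(\cap_{j=1}^k B_{i_j})$ for every finite subcollection $\{B_{i_1}, \ldots, B_{i_k}\}$. To invoke Corollary~\ref{c:1} for this $\oo$, I need the inequality $\liminf_{n\to\infty} P_n(\cup_{j=1}^k B_j) \ge P(\cup_{j=1}^k B_j)$ for each $k \ge 1$, and the sequence $B_1, B_2, \ldots$ is already a countable covering of $\oo$ by measurable subsets.

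Next I would apply Lemma~\ref{l:3} to the finite collection $\mathcal{L} = \{B_1, \ldots, B_k\}$. The hypothesis of Corollary~\ref{c:2} gives convergence $\lim_{n\to\infty} P_n(\cap_{B_i \in \mathcal{L}'} B_i) = P(\cap_{B_i \in \mathcal{L}'} B_i)$ for every subcollection $\mathcal{L}' \subseteq \mathcal{L}$, which by Lemma~\ref{l:3} is equivalent to $\lim_{n\to\infty} P_n(\cup_{B_i \in \mathcal{L}'} B_i) = P(\cup_{B_i \in \mathcal{L}'} B_i)$ for every such $\mathcal{L}'$. Specializing to $\mathcal{L}' = \mathcal{L}$ gives $\lim_{n\to\infty} P_n(\cup_{j=1}^k B_j) = P(\cup_{j=1}^k B_j)$, and in particular the required $\liminf$ inequality. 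Since $\oo$ was an arbitrary open subset of $\S$, Corollary~\ref{c:1} now yields $P_n \Wc P$.

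There is no real obstacle here; the whole argument is a one-line reduction once Lemma~\ref{l:3} is invoked to swap intersections for unions, and Corollary~\ref{c:1} does all the remaining work. The only thing to be mildly careful about is verifying that the subsets $B_j$ given by the hypothesis are indeed measurable (so that the intersections and unions in Lemma~\ref{l:3} and Corollary~\ref{c:1} make sense), but this is part of the assumption.
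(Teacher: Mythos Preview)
Your proposal is correct and follows essentially the same route as the paper: apply Lemma~\ref{l:3} to convert the assumed convergence on finite intersections into convergence on finite unions, and then invoke Corollary~\ref{c:1}. The paper's proof is just a terser statement of exactly this argument.
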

\begin{proof} In view of Lemma~\ref{l:3}, for each open subset $\oo$ of $\S$ condition (ii) implies
 that  $\lim_{n\to\infty}P_n(\cup_{j=1}^k B_j)= P(\cup_{j=1}^k
B_j)$ for all $k=1,2,\ldots,$ and according to Corollary~\ref{c:1}
these equalities imply that $P_n\Wc P$. \end{proof}
\begin{corollary}\label{cor:1(1space)}
Let  $\{P_{n}\}_{n=1,2,\ldots}$ be a sequence of probability
measures from $\P(\S)$ and $P\in\P(\S)$. If the topology on $\S$
has a countable base $\tau_b$ such that $P_n(\oo)\to P(\oo)$ for
each finite intersection $\oo=\cap_{i=1}^ k {\oo}_{i}$    with
$\oo_{i}\in\tau_b,$ $i=1,2,\ldots,k,$ %$k=1,2,\ldots,$ % the sequence
%$\{P_{n}(\oo)\}_{n=1,2,\ldots}\subset [0,1]$ converges to $P
%(\oo)$,
then $P_n\Wc P$.
\end{corollary}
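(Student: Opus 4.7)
The plan is to reduce this directly to Corollary~\ref{c:2} by exploiting the defining property of a base. Fix an arbitrary open subset $\oo\subseteq\S$. Because $\tau_b$ is a countable base for the topology on $\S$, the subcollection $\tau_b(\oo):=\{B\in\tau_b:B\subseteq\oo\}$ is a countable family of open sets whose union equals $\oo$. Enumerate this family as $B_1,B_2,\ldots$; this choice immediately verifies condition (i) of Corollary~\ref{c:2}.

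To verify condition (ii) of Corollary~\ref{c:2}, note that for any finite subcollection $\{B_{i_1},\ldots,B_{i_k}\}\subseteq\{B_1,B_2,\ldots\}$, the set $\cap_{j=1}^k B_{i_j}$ is a finite intersection of elements of $\tau_b$. Hence by the hypothesis of the corollary, $P_n(\cap_{j=1}^k B_{i_j})\to P(\cap_{j=1}^k B_{i_j})$ as $n\to\infty$. Applying Corollary~\ref{c:2} with this particular sequence $\{B_j\}$ then yields $P_n\Wc P$, which is the desired conclusion.

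There is essentially no analytic obstacle: the only thing to be careful about is that the $B_j$'s in Corollary~\ref{c:2} are allowed to be measurable subsets of $\oo$, and here we are making the stronger choice that each $B_j$ is itself a basic open set contained in $\oo$. This stronger choice is what allows the hypothesis ``convergence on finite intersections of elements of $\tau_b$'' to be applied verbatim to finite intersections $\cap_{j=1}^k B_{i_j}$. The translation between convergence on intersections and convergence on finite unions (which is what Theorem~\ref{t:1} would require directly) is performed internally by Corollary~\ref{c:2} through Lemma~\ref{l:3} and the inclusion-exclusion identity, so no further computation is needed.
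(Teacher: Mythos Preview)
Your proof is correct and uses essentially the same ingredients as the paper. The paper's proof applies Lemma~\ref{l:3} directly to the hypothesis (convergence on finite intersections of elements of $\tau_b$) to obtain convergence on finite unions of elements of $\tau_b$, and then invokes Theorem~\ref{t:1}. You instead route through Corollary~\ref{c:2}, fixing an open set $\oo$ and choosing the covering sequence $\{B_j\}\subseteq\tau_b$ explicitly; as you note, Corollary~\ref{c:2} then carries out the inclusion--exclusion step (Lemma~\ref{l:3}) and the passage to Corollary~\ref{c:1} internally. The two arguments unwind to the same chain of lemmas, so the difference is only in packaging.
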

\begin{proof}
 In view of Lemma~\ref{l:3}, $\lim_{n\to\infty}P_n(\oo^*)= P(\oo^*)$ for each finite union
$\oo^*=\cup_{i=1}^k {\oo}_{i}$   with $\oo_{i}\in\tau_b,$
$k=1,2,\ldots\ .$  Theorem~\ref{t:1} implies that $P_n\Wc P$.
\end{proof}

The following example demonstrates that the assumptions of
Corollary~\ref{cor:1(1space)} does not imply that $P_n\Sc P$.
\begin{example}\label{ex2}{\rm  Let $\S=\R^1$, $P$ be a
deterministic measure concentrated at the point $a=\sqrt 2,$  and
$P_n$ be deterministic measures concentrated at the points
$a_n={\sqrt 2}+n^{-1},$ $n=1,2,\ldots \ .$  Since $a_n\to a,$ then
$P_n\Wc P$ as $n\to \infty.$ %where $P(A)={\bf I}\{a\in A\}$ and
%$P_n(A)={\bf I}\{a_n\in A\}$ for each Borel subset $A$ of $\R^1$
%and for each $n=1,2,\ldots\ .$
Let $\tau_B$ be the family consisting
of an empty set, $\R^1,$ and of all the open intervals on $\R^1$ with
rational ends.  Then $\tau_b$ is a countable base of the topology
on $\R^1$ generated by the Euclidean metric.  Observe that
$\oo_1\cap\oo_2\in\tau_b$ for all $\oo_1,\oo_2\in\tau_b$, and
$\lim_{n\to\infty} P_n((b_1,b_2))={\bf I}\{a\in (b_1,b_2)\}=P((b_1,b_2))$, for any rational $b_1<b_2$.
Thus the assumptions of Corollary~\ref{cor:1(1space)} hold.
However, of course, it is not true that $P_n\Sc P,$ because
$P_n(\{a\} )=0$ for all $n=1,2,\ldots,$ but $P(\{a\})=1.$}  \hfill$\Box$
\end{example}

\begin{corollary}\label{c:3}
Let  $\{P_{n}\}_{n=1,2,\ldots}$ be a sequence of probability
measures from $\P(\S)$ and $P\in\P(\S)$. If the topology on $\S$
has a countable base $\tau_b$ such that $P_n(\oo)\to P(\oo)$ for
each finite intersection $\oo=\cap_{i=1}^ k {\oo}_{i}$ with
$\oo_{i}\in\tau_b,$ $i=1,2,\ldots,k,$ and, in addition, for any
close set $C\subseteq\S$ there is a  sequence of
measurable subsets $B_1,B_2,\ldots$ of $C$ such that
$C=\cup_{i=1}^\infty B_j$ and condition (ii) of
Corollary~\ref{c:2} holds,
% the sequence
%$\{P_{n}(\oo)\}_{n=1,2,\ldots}\subset [0,1]$ converges to $P
%(\oo)$,
then $P_n\Sc P$.
\end{corollary}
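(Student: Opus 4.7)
The plan is to show separately that $\limsup_{n\to\infty} P_n(C)\le P(C)$ and $\liminf_{n\to\infty} P_n(C)\ge P(C)$ for each closed subset $C\subseteq\S$, since by Theorem~\ref{t3} the equality $\lim_{n\to\infty}P_n(C)=P(C)$ for all closed $C$ characterizes setwise convergence.

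First I would extract weak convergence from the countable base hypothesis. The assumption on finite intersections $\oo=\cap_{i=1}^k\oo_i$ of elements of $\tau_b$ is precisely the hypothesis of Corollary~\ref{cor:1(1space)}, so that corollary immediately gives $P_n\Wc P$. Then Theorem~\ref{t1}(iii) yields $\limsup_{n\to\infty} P_n(C)\le P(C)$ for every closed $C\subseteq\S$, which takes care of one of the two inequalities.

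Next I would obtain the matching lower bound from the second hypothesis. Fix a closed set $C$ and a sequence $B_1,B_2,\ldots$ of measurable subsets of $C$ as supplied by the hypothesis, so that $C=\cup_{j=1}^\infty B_j$ and condition (ii) of Corollary~\ref{c:2} holds, i.e., $\lim_{n\to\infty}P_n(\cap_{j=1}^k B_{i_j})=P(\cap_{j=1}^k B_{i_j})$ for every finite subcollection. By Lemma~\ref{l:3} applied to the finite collection $\{B_1,\ldots,B_k\}$, the convergence on all intersections is equivalent to convergence on all unions, so in particular $\lim_{n\to\infty}P_n(\cup_{j=1}^k B_j)=P(\cup_{j=1}^k B_j)$ for every $k$. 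This is stronger than the hypothesis of Lemma~\ref{l:1} with $B=C$, and that lemma then delivers $\liminf_{n\to\infty} P_n(C)\ge P(C)$.

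Combining the two inequalities yields $\lim_{n\to\infty} P_n(C)=P(C)$ for every closed $C\subseteq\S$, and Theorem~\ref{t3} finishes the proof. The argument is essentially a bookkeeping exercise stitching together Corollary~\ref{cor:1(1space)}, Lemma~\ref{l:3}, Lemma~\ref{l:1}, and Theorem~\ref{t3}; the only subtle point is noticing that Lemma~\ref{l:3} lets us pass from the intersection formulation (which is the given hypothesis) to the union formulation (which is what Lemma~\ref{l:1} requires), so no real obstacle is anticipated.
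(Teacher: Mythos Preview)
Your proof is correct and follows essentially the same route as the paper: invoke Corollary~\ref{cor:1(1space)} for weak convergence, use Lemma~\ref{l:3} to pass from intersections to unions, apply Lemma~\ref{l:1} for the lower bound, and conclude via Theorem~\ref{t3}. The only cosmetic difference is that the paper phrases the argument in terms of an arbitrary open set $\oo$ and its closed complement $\oo^c$, deriving $\liminf_{n\to\infty}P_n(\oo)\ge P(\oo)$ and $\liminf_{n\to\infty}P_n(\oo^c)\ge P(\oo^c)$, whereas you work directly with closed sets and pair the $\limsup$ bound from Theorem~\ref{t1}(iii) with the $\liminf$ bound from Lemma~\ref{l:1}; these are equivalent by complementation.
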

\begin{proof}  Let $\oo$ be an arbitrary open subset.  In view of
Corollary~\ref{cor:1(1space)}, the properties of the base $\tau_b$
imply that $P_n\Wc P$.  Therefore
\begin{equation}\label{eq3.2nn} \ilim_{n\to\infty} P_n(\oo)\ge P(\oo).\end{equation}  Let
$C=\oo^c.$ Condition (ii) of Corollary~\ref{c:2} and
Lemma~\ref{l:3}  imply that $\lim_{n\to\infty}P_n(\cup_{j=1}^k
B_j)=P(\cup_{j=1}^k B_j)$ for all $k=1,2,\ldots.$  In view of
Lemma~\ref{l:1},
\begin{equation}\label{eq3.3nn} \ilim_{n\to\infty} P_n(\oo^c)\ge P(\oo^c).\end{equation}
Inequalities (\ref{eq3.2nn}) and (\ref{eq3.3nn}) imply that
$\lim_{n\to\infty} P_n(\oo)= P(\oo).$  Since $\oo$ is an arbitrary
open subset of $\S$, Theorem~\ref{t3} implies that $P_n\Sc P$.
\end{proof}

\section{Continuity of Transition Probabilities}\label{3A}

For a Borel subset $S$ of a metric space $(\S,\rho)$, where $\rho$
is a metric, consider the metric space $(S,\rho)$.  A set $B$ is called open
(closed,  compact) in $S$ if $B\subseteq S$ and $B$ is open (closed,
compact) in $(S,\rho)$. Of course, if $S=\S$, we omit
``in $\S$''. Observe that, in general, an open (closed, compact) set
in $S$ may not be open (closed, compact). Open sets in $S$ form the topology on $S$ defined by the restriction of metric $\rho$ on $S$.

For  metric spaces $\S_1$ and $\S_2$, a (Borel-measurable) \textit{stochastic
kernel} (sometimes called transition probability) $R(ds_1|s_2)$ on $\S_1$ given $\S_2$ is a mapping $R(\,\cdot\,|\,\cdot\,):\B(\S_1)\times \S_2\to [0,1]$, such that $R(\,\cdot\,|s_2)$ is a
probability measure on $\S_1$ for any $s_2\in \S_2$, and $R(B|\,\cdot\,)$ is a Borel-measurable function on $\S_2$ for any Borel set $B\in\B(\S_1)$. A
stochastic kernel $R(ds_1|s_2)$ on $\S_1$ given $\S_2$ defines a Borel measurable mapping
$s_2\to R(\,\cdot\,|s_2)$ of $\S_2$ to the metric space
$\P(\S_1)$ endowed with the topology of weak convergence.
A stochastic kernel
$R(ds_1|s_2)$ on $\S_1$ given $\S_2$ is called
\textit{weakly continuous (setwise continuous, continuous in  the total variation)}, if $R(\,\cdot\,|s^{(n)})$ converges weakly (setwise, in
 the total  variation) to $R(\,\cdot\,|s)$ whenever $s^{(n)}$ converges to $s$
in $\S_2$.

In the rest of this section,  $\S_1$, $\S_2$ and $\S_3$ are Borel subsets of Polish (complete separable metric) spaces, and $P$ is a
stochastic kernel on $\S_1\times\S_2$ given $\S_3$. The following statement follows from Corollary~\ref{cor:1(1space)}.  As follows from Lemma~\ref{l:3}, the continuity of finite intersection in the condition of Corollary~\ref{teor:2} can be replaced with the assumption that probabilities of finite unions are continuous.

\begin{corollary}\label{teor:2}
If the topology on $\S_i$, $i=1,2$, has a countable base $\tau_b^i$
such that $P(\oo_1\times\oo_2|\,\cdot\,)$ is continuous on $\S_3$
for each finite intersections $\oo_i=\cap_{j=1}^ N {\oo}^{j}_i$ with
$\oo^{j}_i\in\tau_b^i,$ $j=1,2,\ldots,N,$ $i=1,2$, then the
stochastic kernel $P$ on $\S_1\times\S_2$ given $\S_3$ is weakly
continuous.
\end{corollary}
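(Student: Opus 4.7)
The plan is to reduce the corollary directly to Corollary~\ref{cor:1(1space)} applied to the product space $\S_1\times\S_2$. To prove weak continuity of $P$, I fix an arbitrary convergent sequence $s_3^{(n)}\to s_3$ in $\S_3$ and set $P_n(\cdot):=P(\,\cdot\,|s_3^{(n)})$ and $P_\infty(\cdot):=P(\,\cdot\,|s_3)$, both viewed as probability measures on the Polish (hence second countable) space $\S_1\times\S_2$. It will suffice to show $P_n\Wc P_\infty$ for each such sequence, and the entire problem reduces to exhibiting a suitable countable base on $\S_1\times\S_2$ along which probabilities of finite intersections converge.

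The natural choice is the countable collection
\[
\tau_b:=\{\oo_1\times\oo_2\,:\,\oo_1\in\tau_b^1,\ \oo_2\in\tau_b^2\},
\]
which is a countable base for the product topology on $\S_1\times\S_2$. A finite intersection of elements of $\tau_b$ has the form
\[
\bigcap_{j=1}^N\bigl(\oo_1^j\times \oo_2^j\bigr)=\Bigl(\bigcap_{j=1}^N\oo_1^j\Bigr)\times\Bigl(\bigcap_{j=1}^N\oo_2^j\Bigr),
\]
i.e.\ a product of a finite intersection of elements of $\tau_b^1$ with a finite intersection of elements of $\tau_b^2$. By the hypothesis of the corollary, the function $s_3\mapsto P\bigl((\cap_j\oo_1^j)\times(\cap_j\oo_2^j)\,\big|\,s_3\bigr)$ is continuous on $\S_3$, so along the chosen convergent sequence
\[
P_n\Bigl(\bigcap_{j=1}^N(\oo_1^j\times\oo_2^j)\Bigr)\to P_\infty\Bigl(\bigcap_{j=1}^N(\oo_1^j\times\oo_2^j)\Bigr).
\]

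Applying Corollary~\ref{cor:1(1space)} with the base $\tau_b$ on the metric space $\S_1\times\S_2$ then gives $P_n\Wc P_\infty$. Since the sequence $s_3^{(n)}\to s_3$ in $\S_3$ was arbitrary, the stochastic kernel $P$ is weakly continuous. The argument is essentially bookkeeping, and the only point requiring any care is to verify that $\tau_b$ is indeed a countable base for the product topology and that its finite intersections are exactly products of finite intersections from $\tau_b^1$ and $\tau_b^2$; this is immediate from the definition of the product topology. For the alternative formulation stated in the sentence preceding the corollary, one would instead invoke Lemma~\ref{l:3} to trade continuity on finite intersections against continuity on finite unions of sets of the form $\oo_1\times\oo_2$, after which the same reduction to Theorem~\ref{t:1} (via Corollary~\ref{c:1}) yields weak continuity.
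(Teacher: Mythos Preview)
Your proof is correct and follows essentially the same route as the paper: both define the product base $\tau_b=\{\oo_1\times\oo_2:\oo_i\in\tau_b^i\}$, use the identity $\bigcap_{j=1}^N(\oo_1^j\times\oo_2^j)=(\bigcap_j\oo_1^j)\times(\bigcap_j\oo_2^j)$ to see that the hypothesis gives convergence on finite intersections of basic sets, and then invoke Corollary~\ref{cor:1(1space)}. Your version is slightly more explicit in fixing a convergent sequence $s_3^{(n)}\to s_3$ to pass from continuity of $P(\oo_1\times\oo_2\,|\,\cdot\,)$ to the convergence required by Corollary~\ref{cor:1(1space)}, but this is the same argument.
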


\begin{proof} %Let %$\tau_b^i,$ $i=1,2,$ be a countable base of the
%topology on $\S_i.$ Since, for each $i=1,2,$ finite intersections
%of elements of the base $\tau_b^i$ are open sets, let us choose
%$\tau_b^i$ in a way that finite intersections of elements of
%$\tau_b^i$ belong to $\tau_b^i.$ Then%%
%$\tau_b^{1,2}:=\{\oo_1\times\oo_2:\, \oo_i\in\tau_b^i,\ i=1,2\}$.
%Note that $\tau_b^{1,2}$ is the countable base of the topology on
%$\S_1\times \S_2$ defined by the products of the topologies on
%$\S_1$ and $\S_2.$ %and for any finite tuples of open sets
%$\{\oo^1_j\}_{j=1}^{N}$ in $\S_1$ and
%$\{\oo^2_j\}_{j=1}^{N}$ in $\S_2$, $N=1,2,\ldots,$ their finite
%intersections $\cap_{j=1}^N\oo_1^{(j)}$ and
%$\cap_{j=1}^N\oo_2^{(j)}$ are open in $\S_1$ and $\S_2$
%respectively.
Let
$\tau_b^{1,2}:=\{\oo'_1\times\oo'_2:\, \oo'_i\in\tau_b^i,\ i=1,2\}$.
Note that $\tau_b^{1,2}$ is a countable base of the topology on
$\S_1\times \S_2$ defined as the product of the topologies on
$\S_1$ and $\S_2.$
Observe that   $\cap_{j=1}^N \left(\oo_1^j\times
\oo_2^j\right)= \left(\cap_{j=1}^N\oo_1^j\right)\times
\left(\cap_{j=1}^N\oo_2^j\right)$ %\in\tau_b^{1,2}$
for any finite
tuples of open sets $\{\oo_i^j\}_{j=1}^{N}$ from $\tau_b^i,$ $i=1,2.$
% Let $\{s^{(n)}\}_{n=1,2,\ldots}\subset \S_3$
%be a sequence that converges to $s\in\S_3$. 
 Denote $\oo_i=\cap_{j=1}^N\oo_i^j$ for $i=1,2.$
By the assumption of Corollary~\ref{teor:2}, $P_n(\oo_1\times\oo_2|\cdot)$ is continuous on $\S_3.$
This means that the assumption of Corollary~\ref{cor:1(1space)} holds for the base $\tau_b^{1,2}.$  Corollary~\ref{cor:1(1space)} implies that the stochastic kernel $P$ on $\S_1\times\S_2$
given $\S_3$ is weakly continuous.
%
%************
%In view of
%Corollary~\ref{cor:1(1space)}, $P(\,\cdot\,|s^{(n)})\Wc
%P(\,\cdot\,|s)$, if $P(\oo_1\times\oo_2|s^{(n)})\to
%P(\oo_1\times\oo_2|s)$ for each sets $\oo_1 \in \tau^1_b$ and $\oo_2
%\in \tau^2_b$. Thus, the stochastic kernel $P$ on $\S_1\times\S_2$
%given $\S_3$ is weakly continuous.
%*****************************
%
\end{proof}

Let  $\F(\S)$ and $\C(\S)$ be respectively the spaces of all
real-valued functions and all bounded continuous functions defined
on the metric space $\S$. A subset $\mathcal{A}_0\subseteq \F(\S)$
is said to be \textit{equicontinuous at a point $s\in\S$}, if $
\sup\limits_{f\in\mathcal{A}_0}|f(s')-f(s)|\to 0$ as $s'\to s. $
If a family $\mathcal{A}_0\subseteq \F(\S)$ is equicontinuous at each point $s\in\S,$
 it is called equicontinuous on $\S.$
A
subset $\mathcal{A}_0\subseteq \F(\S)$ is said to be
\textit{uniformly bounded}, if there exists a constant $M<+\infty $ such that $ |f(s)|\le M$ for all $s\in\S$ and  for
all $f\in\mathcal{A}_0. $  Obviously, if a subset
$\mathcal{A}_0\subseteq \F(\S)$ is equicontinuous at all the
points $s\in\S$ and uniformly bounded, then
$\mathcal{A}_0\subseteq \C(\S).$

\begin{theorem}\label{kern}{\rm(Feinberg et al.
\cite[Theorem~5.2]{FKZ}).} Let $\S_1$, $\S_2$, and $\S_3$ be
arbitrary metric spaces, $P(ds_2|s_1)$ be a weakly continuous
stochastic kernel on $\S_2$ given $\S_1$, and a subset
$\mathcal{A}_0\subseteq \C(\S_2\times\S_3)$ be equicontinuous at all
the points $(s_2,s_3)\in\S_2\times\S_3$ and uniformly bounded. If
$\S_2$ is separable, then for every open set $\oo$ in $\S_2$ the
family of functions defined on $\S_1\times\S_3$,
\[
\mathcal{A}_\oo=\left\{(s_1,s_3)\to\int_{\oo}f(s_2,s_3)P(ds_2|s_1)\,:\,
f\in\mathcal{A}_0\right\},
\]
is equicontinuous at all the points $(s_1,s_3)\in\S_1\times\S_3$ and
uniformly bounded.
\end{theorem}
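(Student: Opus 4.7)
The plan is to verify uniform boundedness directly and then reduce equicontinuity at a point $(s_1,s_3)\in\S_1\times\S_3$ to two separate uniform convergence statements, with separability of $\S_2$ entering through Prohorov tightness and an Arzel\`a--Ascoli argument.

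Uniform boundedness of $\mathcal{A}_\oo$ is immediate from $|\int_\oo f(s_2,s_3)P(ds_2|s_1)|\le M\,P(\oo|s_1)\le M$. For equicontinuity, I would fix a sequence $(s_1^{(n)},s_3^{(n)})\to(s_1,s_3)$ and split
\[
\int_\oo f(s_2,s_3^{(n)})P(ds_2|s_1^{(n)})-\int_\oo f(s_2,s_3)P(ds_2|s_1)=I_n(f)+J_n(f),
\]
where $I_n(f):=\int_\oo[f(s_2,s_3^{(n)})-f(s_2,s_3)]P(ds_2|s_1^{(n)})$ absorbs the $s_3$-variation against a moving measure and $J_n(f):=\int_\oo f(s_2,s_3)[P(ds_2|s_1^{(n)})-P(ds_2|s_1)]$ absorbs the variation of the measure against a fixed integrand; it then suffices to show $\sup_{f\in\mathcal{A}_0}|I_n(f)|\to 0$ and $\sup_{f\in\mathcal{A}_0}|J_n(f)|\to 0$.

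For $I_n$, I would use Prohorov's theorem: since $\S_2$ is separable and $P(\,\cdot\,|s_1^{(n)})\Wc P(\,\cdot\,|s_1)$, the sequence $\{P(\,\cdot\,|s_1^{(n)})\}$ is tight, so for any $\epsilon>0$ there exists a compact $K\subseteq\S_2$ with $P(\S_2\setminus K|s_1^{(n)})<\epsilon$ for all $n$. Equicontinuity of $\mathcal{A}_0$ at each $(s_2,s_3)$ with $s_2\in K$ upgrades by a finite-cover argument to a modulus of continuity in the $s_3$-variable that is uniform in $s_2\in K$ and $f\in\mathcal{A}_0$, yielding $\sup_f|I_n(f)|\le 2M\epsilon+o(1)$. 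For $J_n$, the slice family $\mathcal{G}:=\{f(\,\cdot\,,s_3):f\in\mathcal{A}_0\}$ is uniformly bounded and equicontinuous on $\S_2$ (specialize the equicontinuity hypothesis to the fixed second coordinate $s_3$), so by Arzel\`a--Ascoli its restriction to any compact $K$ is totally bounded in the supremum norm; a finite $\eta$-net for $\mathcal{G}|_K$ reduces $J_n$ to weak convergence tested against finitely many bounded continuous functions.

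The principal obstacle is that the integration in $J_n$ is over the open set $\oo$ rather than over $\S_2$, so weak convergence of measures does not directly pass through the indicator $\mathbf{1}_\oo$. I would handle this by approximating $\mathbf{1}_\oo$ from below by the bounded continuous functions $h_k(s_2):=\min(k\rho(s_2,\oo^c),1)$, which satisfy $h_k\uparrow\mathbf{1}_\oo$ in any metric space and allow $\int f\,\mathbf{1}_\oo\,dP(\,\cdot\,|s_1^{(n)})$ to be approximated by $\int f\,h_k\,dP(\,\cdot\,|s_1^{(n)})$ for $k$ large. The residual mass in the thin strip around $\partial\oo$ is controlled uniformly in $n$ by tightness combined with the Portmanteau-type inequality $\liminf_n P(\oo|s_1^{(n)})\ge P(\oo|s_1)$ from Theorem~\ref{t1}(ii).
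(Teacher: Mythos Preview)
The paper does not prove this result; it is quoted from \cite[Theorem~5.2]{FKZ} and used only once (in the proof of Theorem~\ref{t:totalvar1}, with $\oo=\S_2$). So there is no in-paper argument to compare against, and I comment only on the soundness of your proposal.

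Your handling of $J_n(f)$ has a genuine gap exactly at the point you flag as the principal obstacle. You claim that the residual $\int_\oo(\mathbf{1}_\oo-h_k)\,P(ds_2|s_1^{(n)})$ can be made small uniformly in $n$ using tightness together with the Portmanteau inequality $\liminf_n P(\oo|s_1^{(n)})\ge P(\oo|s_1)$. But Portmanteau gives only a \emph{lower} bound on $P(\oo|s_1^{(n)})$, whereas the residual equals $P(\oo|s_1^{(n)})-\int h_k\,P(ds_2|s_1^{(n)})$ and requires an \emph{upper} bound on the first term. The best you get from weak convergence is $\limsup_n P(\oo|s_1^{(n)})\le P(\bar\oo|s_1)=P(\oo|s_1)+P(\partial\oo|s_1)$, so after letting $k\to\infty$ your residual is only bounded by $P(\partial\oo|s_1)$, not by $0$. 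When the limiting measure charges $\partial\oo$, the argument collapses.

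This obstruction is not repairable without extra hypotheses. Take $\S_1=[0,1]$, $\S_2=\R^1$, $\S_3$ a singleton, $P(\,\cdot\,|s_1)=\delta_{s_1}$ (weakly continuous), $\mathcal{A}_0=\{1\}$, and $\oo=(0,1)$. All hypotheses of the statement are met, yet $\int_\oo 1\,P(ds_2|s_1)=\mathbf{1}\{s_1\in(0,1)\}$ is discontinuous at $s_1=0$. So the statement as printed appears to need either $\oo=\S_2$ (which is exactly how the paper applies it) or an assumption such as $P(\partial\oo|s_1)=0$; your proof cannot be completed as written because the boundary issue is real.

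A secondary point: your $I_n$ argument invokes tightness of a weakly convergent sequence on a merely separable metric space. Tightness of the limit (and hence of the sequence, via Le~Cam) is automatic on Polish spaces but not on arbitrary separable metric spaces, which is all the hypothesis grants; you would need to justify this step separately.
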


Further as $\tau(\S)$ we denote the family of all open subsets of a metric space $\S$.
For each $B\in{\cal B}(\S_1)$  consider a family of functions
\[\mathcal{P}_B=\{  s_3\to P(B\times C|s_3):\, C\in \tau(\S_2)\}\] mapping $\S_3$ into $[0,1]$.

\begin{lemma}\label{lem:PB} Let $B\in{\cal B}(\S_1)$. The family of functions $\mathcal{P}_B$
is equicontinuous at a point $s_3 \in \S_3$ if and only if
\begin{equation}
\label{eq:EC}
\sup_{C \in \B(\S_2)} | P(B \times C | s_3^{(n)}) - P(B \times C | s_3)| \to 0 \qquad \mbox{ as }  \qquad s_3^{(n)} \to s_3.%n \to \infty,
\end{equation}
\end{lemma}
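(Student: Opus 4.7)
The equivalence amounts to showing that, for the signed set function $\nu_n(C) := P(B\times C|s_3^{(n)}) - P(B\times C|s_3)$ on $\B(\S_2)$, the quantity $\sup_{C\in\B(\S_2)}|\nu_n(C)|$ coincides with $\sup_{C\in\tau(\S_2)}|\nu_n(C)|$ — because equicontinuity of $\mathcal{P}_B$ at $s_3$ is literally the statement that the latter sup tends to $0$ along every sequence $s_3^{(n)}\to s_3$, while (\ref{eq:EC}) is the former. The ``$\Leftarrow$'' direction is then immediate since $\tau(\S_2)\subseteq\B(\S_2)$, so I would spend essentially all the proof on the ``$\Rightarrow$'' direction by establishing this equality of suprema for each fixed $n$.

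For the nontrivial direction, I would fix $n$ and observe that $\mu^{(n)}(\cdot):=P(B\times\cdot|s_3^{(n)})$ and $\mu(\cdot):=P(B\times\cdot|s_3)$ are finite Borel measures on $\S_2$, so $\nu_n=\mu^{(n)}-\mu$ is a finite signed Borel measure. Taking its Jordan decomposition $\nu_n=\nu_n^+-\nu_n^-$ with Hahn set $E_n$, both $\nu_n^+$ and $\nu_n^-$ are finite nonnegative Borel measures on the metric space $\S_2$, hence regular after normalization exactly as in the proof of Theorem~\ref{t5}, by Billingsley \cite[Theorem~1.1]{Bil} or Bogachev \cite[Theorem~7.1.7]{bogachev}. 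Given $C\in\B(\S_2)$ and $\varepsilon>0$, I would choose open $O_1,O_2\supseteq C$ with $\nu_n^+(O_1\setminus C)<\varepsilon/2$ and $\nu_n^-(O_2\setminus C)<\varepsilon/2$, then take $O:=O_1\cap O_2$, which is open, contains $C$, and satisfies $|\nu_n|(O\setminus C)<\varepsilon$. Hence
\[
|\nu_n(C)|\le|\nu_n(O)|+|\nu_n|(O\setminus C)\le \sup_{O'\in\tau(\S_2)}|\nu_n(O')|+\varepsilon,
\]
and letting $\varepsilon\downarrow 0$ and taking the sup over $C\in\B(\S_2)$ yields $\sup_{C\in\B(\S_2)}|\nu_n(C)|\le\sup_{O\in\tau(\S_2)}|\nu_n(O)|$, with the reverse inequality being obvious.

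The main obstacle is conceptual rather than technical: one must resist the temptation to invoke Theorem~\ref{t5}(ii) directly, since that theorem is stated for probability measures, whereas here $\mu^{(n)}$ and $\mu$ are only finite subprobability measures (with $\mu^{(n)}(\S_2)=P(B\times\S_2|s_3^{(n)})$ not necessarily equal to $1$ and possibly different from $\mu(\S_2)$, so one cannot simply renormalize the \emph{difference}). The right move is to apply regularity to the Jordan components $\nu_n^+$ and $\nu_n^-$ separately and combine the two approximating opens by intersection; this is the step at which one actually uses that $\S_2$ is a metric space (as a Borel subset of a Polish space, which is the setting of Section~\ref{3A}), and it is what makes the argument work uniformly in $C$ without any uniformity hypothesis on the sequence $\{s_3^{(n)}\}$.
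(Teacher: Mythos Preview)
Your argument is correct. You establish the pointwise identity $\sup_{C\in\B(\S_2)}|\nu_n(C)|=\sup_{C\in\tau(\S_2)}|\nu_n(C)|$ by applying outer regularity to the Jordan components $\nu_n^{\pm}$ and intersecting the two approximating open sets; this is sound, and the normalization remark you make to justify citing the regularity theorems for probability measures is the right fix.

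The paper takes a different route. Rather than working with the signed measure $\nu_n$ directly, it splits into two cases. If $P(B\times\S_2\,|\,s_3)=0$, the supremum over $\B(\S_2)$ reduces to $P(B\times\S_2\,|\,s_3^{(n)})$, which tends to~$0$ since $\S_2$ is open. If $P(B\times\S_2\,|\,s_3)>0$, the paper \emph{does} invoke Theorem~\ref{t5}(ii), after first normalizing each of $P(B\times\,\cdot\,|\,s_3^{(n)})$ and $P(B\times\,\cdot\,|\,s_3)$ by its own total mass to obtain genuine probability measures on $\S_2$; a short triangle-inequality computation then transfers the open-set convergence to the normalized pair, and Theorem~\ref{t5}(ii) upgrades this to convergence over all Borel sets. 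So the manoeuvre you warn against is exactly what the paper does, but with the normalization carried out on the two measures separately rather than on their difference.

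Your approach buys a case-free proof and a clean pointwise identity valid for each fixed $n$; the paper's approach buys brevity by recycling Theorem~\ref{t5}(ii) wholesale, at the cost of the case split and the normalization bookkeeping. Both rely on the same underlying regularity of finite Borel measures on metric spaces.
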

\begin{proof}
According to the definition of the equicontinuity of the family of functions
$\mathcal{P}_B$ at a point, it is sufficient to prove that (\ref{eq:EC}) follows from
\[
\sup_{C \in \tau(\S_2)} | P(B \times C | s_3^{(n)}) - P(B \times C | s_3)|\to 0 \qquad \mbox{ as }  \qquad s_3^{(n)} \to s_3.
\]
Indeed, if $P(B \times \S_2 | s_3)=0$, then $\sup_{C \in \B(\S_2)} | P(B \times C | s_3^{(n)}) - P(B \times C | s_3)|= P(B \times \S_2 | s_3^{(n)})\to P(B \times \S_2 | s_3)=0$ as $s_3^{(n)} \to s_3$, because $\S_2\in\tau(\S_2)$. Otherwise, when $P(B \times \S_2 | s_3)>0$, according to the convergence $P(B \times \S_2 | s_3^{(n)})\to P(B \times \S_2 | s_3)>0$ as $s_3^{(n)} \to s_3$, Theorem~\ref{t5}(ii) applied to the probability measures $C\to P(B \times C | s_3^{(n)})/ P(B \times \S_2 | s_3^{(n)})$ and $C\to P(B \times C | s_3)/ P(B \times \S_2 | s_3)$ from $\P(\S_2)$, where $n$ is rather large, yields that (\ref{eq:EC}) holds, that is, the family of functions $\mathcal{P}_B$
is equicontinuous at a point $s_3 \in \S_3$.
\end{proof}

Let $P'$ be the marginal of $P$ on $\S_2$, that is,
$P'(C|s_3):=P(\S_1\times C|s_3)$, $C\in \B(\S_2)$, $s_3\in \S_3$. There exists a stochastic
kernel $H$ on $\S_1$ given $\S_2\times\S_3$ such that, for all $B\in \B(\S_1), C\in \B(\S_2),s_3\in \S_3$
\begin{equation}
\label{eq:H}
P(B\times C|s_3)=\int_{C}H(B|s_2,s_3)P'(ds_2|s_3);
\end{equation}
Bertsekas and Shreve~\cite[Proposition~7.27]{BS}. Moreover, for each $s_3 \in \S_3$, the distribution $H(\,\cdot\, | s_2, s_3)$ is $P'(\,\cdot\,|s_3)$-a.s.\, unique in $s_2$, that is, if $H_1$ and $H_2$ satisfy \eqref{eq:H} then $P'(C^*| s_3) = 0$, where $C^* := \{ s_2 \in \S_2 : H_1(B|s_2,s_3) \not = H_2(B|s_2,s_3) \mbox{ for some } B \in \B(\S_1)\}$; Bertsekas and Shreve~\cite[Corollary~7.27.1]{BS}. %, Dynkin and Yushkevich~\cite[Appendix 4.4]{DY}.

\begin{theorem}
\label{mainthkern} Let the topology on $\S_1$ have a countable base
$\tau_b $ satisfying the following two conditions: \begin{itemize} \item[(i)] $\S_1\in \tau_b,$ \item[(ii)] for each finite
intersection $\oo = \bigcap_{i = 1}^N \oo^{i}$ of sets
$\oo^{i} \in \tau_b$, $i = 1,2,\ldots, N$, the family of
functions $\mathcal{P}_{\oo}$ is equicontinuous at a point
$s_s\in \S_3$. \end{itemize} Then, for any sequence $\{s_3^{(n)}\}_{n =
1,2,\ldots}$ from $\S_3$ converging to $s_3\in\S_3$, there
exists a subsequence $\{n_k\}_{k=1,2,\ldots}$ and a set $C^* \in
\B(\S_2)$ such that
\begin{equation}
\label{result}
P'(C^* | s_3)  = 1  \mbox{ and }\, H(\,\cdot\, | s_2, s_3^{(n_k)} ) \mbox{ converges weakly to } H(\,\cdot\, | s_2, s_3) \mbox{ for all } s_2 \in C^*\  as\  k\to\infty.
\end{equation}
\end{theorem}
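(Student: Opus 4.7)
The plan is to reduce the theorem to Corollary~\ref{cor:1(1space)} applied to the probability measures $H(\,\cdot\,|s_2,s_3^{(n)})$ and $H(\,\cdot\,|s_2,s_3)$ on $\S_1$ using the countable base $\tau_b$. By that corollary, it suffices to produce a subsequence $\{n_k\}$ and a set $C^*\in\B(\S_2)$ with $P'(C^*|s_3)=1$ such that, for every $s_2\in C^*$ and every finite intersection $\oo=\cap_{i=1}^N \oo^i$ with $\oo^i\in\tau_b$, the convergence $H(\oo|s_2,s_3^{(n_k)})\to H(\oo|s_2,s_3)$ holds as $k\to\infty$.

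Fix such an intersection $\oo$. By hypothesis, $\mathcal{P}_{\oo}$ is equicontinuous at $s_3$, so Lemma~\ref{lem:PB} yields
\[
\sup_{C\in\B(\S_2)}|P(\oo\times C|s_3^{(n)})-P(\oo\times C|s_3)|\to 0 \qquad \text{as } n\to\infty.
\]
The special case $\oo=\S_1\in\tau_b$ shows that the marginals converge in total variation, $\rho_{tv}(P'(\,\cdot\,|s_3^{(n)}),P'(\,\cdot\,|s_3))\to 0$. Set $h_n(s_2):=H(\oo|s_2,s_3^{(n)})$ and $h(s_2):=H(\oo|s_2,s_3)$, both bounded by $1$. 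The disintegration \eqref{eq:H} gives
\[
\int_C h_n(s_2)\,P'(ds_2|s_3^{(n)})=P(\oo\times C|s_3^{(n)}), \qquad \int_C h(s_2)\,P'(ds_2|s_3)=P(\oo\times C|s_3).
\]
Since $\|h_n\|_\infty\le 1$, formula \eqref{eq2shir} implies that the integrals of $h_n$ against $P'(\,\cdot\,|s_3^{(n)})$ and $P'(\,\cdot\,|s_3)$ differ by at most $\rho_{tv}(P'(\,\cdot\,|s_3^{(n)}),P'(\,\cdot\,|s_3))$, uniformly in $C\in\B(\S_2)$. Combining these facts yields
\[
\sup_{C\in\B(\S_2)}\Bigl|\int_C(h_n(s_2)-h(s_2))\,P'(ds_2|s_3)\Bigr|\to 0.
\]
Applied to the Hahn decomposition of $h_n-h$, this supremum bounds $\tfrac{1}{2}\int_{\S_2}|h_n-h|\,dP'(\,\cdot\,|s_3)$, so $h_n\to h$ in $L^1(P'(\,\cdot\,|s_3))$, and hence $h_{n_k}\to h$ pointwise $P'(\,\cdot\,|s_3)$-almost surely along some subsequence.

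The family $\mathcal{F}$ of all finite intersections of elements of $\tau_b$ is countable. Enumerating it and extracting nested subsequences (one for each $\oo\in\mathcal{F}$), the standard diagonal procedure furnishes a single subsequence $\{n_k\}$ and a Borel set $C^*\subseteq\S_2$, equal to the intersection of the associated $P'(\,\cdot\,|s_3)$-full-measure sets, with $P'(C^*|s_3)=1$ and such that $H(\oo|s_2,s_3^{(n_k)})\to H(\oo|s_2,s_3)$ holds simultaneously for all $\oo\in\mathcal{F}$ and all $s_2\in C^*$. For each fixed $s_2\in C^*$, Corollary~\ref{cor:1(1space)} applied to $\S_1$ with base $\tau_b$ then yields $H(\,\cdot\,|s_2,s_3^{(n_k)})\Wc H(\,\cdot\,|s_2,s_3)$, completing the proof.

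The main obstacle is that $h_n=H(\oo|\,\cdot\,,s_3^{(n)})$ is naturally a Radon--Nikodym derivative with respect to the \emph{varying} marginal $P'(\,\cdot\,|s_3^{(n)})$, so a direct pointwise comparison with $h$ is not available. This is circumvented by using the hypothesis $\S_1\in\tau_b$, which, via the same equicontinuity and Lemma~\ref{lem:PB}, delivers total variation convergence of the marginals themselves and thereby allows all integrals to be transferred to the common reference measure $P'(\,\cdot\,|s_3)$. The passage from $L^1$ to almost-sure convergence costs a subsequence, and the diagonalization over the countable family $\mathcal{F}$ is then routine.
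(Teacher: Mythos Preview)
Your proof is correct and takes a cleaner route than the paper's. Both arguments share the same core: use the equicontinuity hypothesis and Lemma~\ref{lem:PB} to obtain uniform-in-$C$ convergence of $P(\oo\times C|s_3^{(n)})$, convert this via the disintegration \eqref{eq:H} and total-variation convergence of the marginals $P'$ into $L^1(P'(\,\cdot\,|s_3))$-convergence of $H(\oo|\,\cdot\,,s_3^{(n)})$ (this is essentially Lemma~\ref{setwise}, which you rederive in one line via the Hahn set of $h_n-h$), extract an almost-sure subsequence, and diagonalize over a countable family. The difference lies in the choice of that family and the final weak-convergence criterion. The paper diagonalizes over the auxiliary class $\A_2=\{\tilde\oo\setminus\oo':\tilde\oo\in\tau_b,\ \oo'\in\A_1\}$, proves an extra lemma (Lemma~\ref{b1b2}) to extend equicontinuity to $\A_2$, and then reaches arbitrary open sets by writing them as disjoint unions from $\A_2$ and invoking the $\liminf$ criterion of Theorem~\ref{t1}. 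You instead diagonalize directly over the countable family $\mathcal{F}$ of finite intersections of base elements---exactly the sets appearing in hypothesis~(ii)---and finish with Corollary~\ref{cor:1(1space)}. This bypasses Lemma~\ref{b1b2} and the $\A_1,\A_2$ machinery entirely, at the modest cost of appealing to Corollary~\ref{cor:1(1space)} (and hence implicitly to Lemma~\ref{l:3} and the inclusion--exclusion argument behind it). A minor remark: when you bound $\bigl|\int_C h_n\,dP'(\,\cdot\,|s_3^{(n)})-\int_C h_n\,dP'(\,\cdot\,|s_3)\bigr|$ by the total-variation distance, the cleanest reference is the defining formula~\eqref{eqdefdist} (since $h_n\mathbf{1}_C\in[0,1]\subset[-1,1]$) rather than \eqref{eq2shir}.
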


\begin{remark}\label{rem:1}
According to Lemma~\ref{l:3}, a countable base $\tau_b$ in
Theorem~\ref{mainthkern} can be assumed to be closed with respect to the finite
unions instead of finite intersections.
\end{remark}

Theorem~\ref{mainthkern} implies the following two corollaries.  The proof of Theorem~\ref{mainthkern} is provided after the proof of Lemma~\ref{b1b2}.

\begin{corollary}
\label{Cormainkern}
If for each open subset $\oo$ of $\S_1$  the family of functions $\mathcal{P}_{\oo}$ is equicontinuous at a point $s_3\in \S_3$, then for any sequence $\{s_3^{(n)}\}_{n = 1,2,\ldots}$ from $\S_3,$ that converges to $s_3\in \S_3$, there exists a subsequence $\{n_k\}_{k=1,2,\ldots}$ and
a set $C^* \in \B(\S_2)$ such that \eqref{result} holds.
\end{corollary}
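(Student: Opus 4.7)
The plan is to deduce Corollary~\ref{Cormainkern} directly from Theorem~\ref{mainthkern} by exhibiting an appropriate countable base of the topology on $\S_1$ and verifying that both hypotheses (i) and (ii) of the theorem hold automatically under the stronger assumption that equicontinuity holds at $s_3$ for \emph{every} open subset.

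First, I would invoke the fact that $\S_1$ is a Borel subset of a Polish space and hence is itself a separable metric space. Consequently, its topology admits a countable base $\tau_b^\prime$ of open subsets of $\S_1$. Since the whole space $\S_1$ is open in $\S_1$, I set $\tau_b := \tau_b^\prime \cup \{\S_1\}$; this is still a countable base and satisfies condition (i) of Theorem~\ref{mainthkern}.

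Next, I would verify condition (ii). Any finite intersection $\oo = \bigcap_{i=1}^{N} \oo^{i}$ of elements $\oo^{i} \in \tau_b$ is, as a finite intersection of open sets in $\S_1$, an open subset of $\S_1$. By the hypothesis of the corollary, $\mathcal{P}_{\oo}$ is equicontinuous at $s_3$ for every open subset $\oo$ of $\S_1$, so in particular this equicontinuity holds for every such finite intersection of base elements. Thus condition (ii) of Theorem~\ref{mainthkern} holds with this choice of $\tau_b$.

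Finally, I would apply Theorem~\ref{mainthkern} directly: for the given sequence $\{s_3^{(n)}\}_{n=1,2,\ldots}$ converging to $s_3$, the theorem yields a subsequence $\{n_k\}_{k=1,2,\ldots}$ and a set $C^\ast \in \B(\S_2)$ with $P^\prime(C^\ast | s_3) = 1$ such that $H(\,\cdot\,|s_2,s_3^{(n_k)})$ converges weakly to $H(\,\cdot\,|s_2,s_3)$ for all $s_2 \in C^\ast$ as $k \to \infty$, which is precisely \eqref{result}. There is no real obstacle here beyond recognizing that the present assumption is strictly stronger than condition (ii) of Theorem~\ref{mainthkern}, so the corollary is a transparent specialization; the only minor point is ensuring separability of $\S_1$ to produce a countable base, which is guaranteed by the standing assumption that $\S_1$ is a Borel subset of a Polish space.
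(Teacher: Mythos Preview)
Your proof is correct and follows essentially the same approach as the paper: both deduce the corollary from Theorem~\ref{mainthkern} by invoking separability of $\S_1$ to obtain a countable base and noting that finite intersections of base elements are open, so equicontinuity for all open sets trivially yields condition~(ii). Your explicit adjoining of $\S_1$ to the base to secure condition~(i) is a small but welcome precision that the paper leaves implicit.
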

\begin{proof}
The statement of the corollary follows immediately from Theorem~\ref{mainthkern}. Indeed, the family of functions $\mathcal{P}_{\oo}$ is equicontinuous on $ \S_3$ for each open set $\oo$ of $\S_1.$ Since $\S_1$ is a separable metric space, each countable base of the topology on $\S_1$ satisfies assumptions of Theorem~\ref{mainthkern}.
\end{proof}

Observe that for a stochastic kernel $P$ on $\S_1\times\S_2$ given $\S_3,$
equicontinuity at a point $s_3\in \S_3$ of the family
of functions $\mathcal{P}_\oo$ for all open subsets $\oo$ in $\S_1$ is a weaker assumption than %the
continuity in the total variation of $P$ on $\S_1\times\S_2$ given $\S_3$ at the point $s_3.$ %of the stochastic kernel $P$ on $\S_1\times\S_2$ given $S_2.$
Equicontinuiuty of the family of functions $\mathcal{P}_{\S_1}$ at a point $s_3\in\S_3$ is equivalent to the continuity in the total variation of the stochastic kernel $P^\prime$ on $\S_2$ given $\S_3$ at the point $s_3.$

\begin{corollary}\label{cor:1}
Let assumptions of Theorem~\ref{mainthkern} hold. If the setwise convergence takes place in \eqref{result} instead of the weak convergence,
then the stochastic kernel $P$ on $\S_1\times\S_2$ given $\S_3$ is setwise continuous.
\end{corollary}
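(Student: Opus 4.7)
The plan is to combine the disintegration formula (\ref{eq:H}), extended from rectangles to arbitrary Borel sets, with total variation convergence of the marginal $P'$ (coming from assumption (i) of Theorem~\ref{mainthkern}) and the strengthened (setwise) conclusion of Theorem~\ref{mainthkern}, estimating $|P(A|s_3^{(n)})-P(A|s_3)|$ along a suitably chosen subsequence.

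First, since assumption (i) of Theorem~\ref{mainthkern} forces $\S_1\in\tau_b$, assumption (ii) applied with $N=1$ and $\oo^{1}=\S_1$ yields that the family $\mathcal{P}_{\S_1}$ is equicontinuous at $s_3$. Combining Lemma~\ref{lem:PB} (with $B=\S_1$) with formula (\ref{eq2shir}), this is equivalent to
\[
\dist(P'(\cdot|s_3^{(n)}),P'(\cdot|s_3))\to 0\quad\text{whenever}\quad s_3^{(n)}\to s_3\text{ in }\S_3.
\]

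Next, fix any sequence $s_3^{(n)}\to s_3$ and any $A\in\B(\S_1\times\S_2)$. By the subsequence principle for real sequences, it suffices to show that every subsequence of $\{s_3^{(n)}\}$ admits a further subsequence $\{s_3^{(n_k)}\}$ along which $P(A|s_3^{(n_k)})\to P(A|s_3)$. The hypothesis of the corollary, applied to any such subsequence, supplies $\{s_3^{(n_k)}\}$ and a set $C^*\in\B(\S_2)$ with $P'(C^*|s_3)=1$ and $H(\cdot|s_2,s_3^{(n_k)})\Sc H(\cdot|s_2,s_3)$ for every $s_2\in C^*$. I would then extend (\ref{eq:H}) from rectangles to arbitrary $A\in\B(\S_1\times\S_2)$ by the monotone class theorem and, writing $A_{s_2}:=\{s_1:(s_1,s_2)\in A\}$, obtain
\[
P(A|s_3')=\int_{\S_2}H(A_{s_2}|s_2,s_3')\,P'(ds_2|s_3'),\qquad s_3'\in\S_3,
\]
so that $|P(A|s_3^{(n_k)})-P(A|s_3)|\le I_k+J_k$, where
\[
I_k:=\left|\int_{\S_2}H(A_{s_2}|s_2,s_3^{(n_k)})\bigl[P'(ds_2|s_3^{(n_k)})-P'(ds_2|s_3)\bigr]\right|,
\]
\[
J_k:=\int_{\S_2}\bigl|H(A_{s_2}|s_2,s_3^{(n_k)})-H(A_{s_2}|s_2,s_3)\bigr|\,P'(ds_2|s_3).
\]
Since $|H(A_{s_2}|s_2,s_3^{(n_k)})|\le 1$, formula (\ref{eqdefdist}) gives $I_k\le\dist(P'(\cdot|s_3^{(n_k)}),P'(\cdot|s_3))\to 0$ by the first step; and $J_k\to 0$ by dominated convergence, the integrand being bounded by $1$ and converging to $0$ at every $s_2\in C^*$ while $P'(C^*|s_3)=1$. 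This gives $P(A|s_3^{(n_k)})\to P(A|s_3)$ for every Borel $A$, and by Theorem~\ref{t3} the setwise continuity of $P$ at $s_3$ follows.

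The hard part will be making the two different convergence modes cooperate: total variation convergence of the marginal $P'$ controls $I_k$ via the uniform bound $|H|\le 1$, while setwise convergence of $H(\cdot|s_2,s_3^{(n_k)})$ on a set of full $P'(\cdot|s_3)$-measure controls $J_k$ via dominated convergence; if one only had setwise convergence of the marginals, controlling $I_k$ for all $A$ simultaneously would fail. A secondary technical point is the extension of (\ref{eq:H}) from rectangles to arbitrary $A\in\B(\S_1\times\S_2)$ together with Borel measurability of $s_2\mapsto H(A_{s_2}|s_2,s_3')$ needed to render the displayed integrals well-defined; this is handled by a standard monotone class argument using the standard Borel structure of $\S_1$ and $\S_2$.
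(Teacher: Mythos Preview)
Your proof is correct and follows essentially the same approach as the paper: disintegrate via $H$, split the difference into a term controlled by the total-variation convergence of the marginal $P'$ and a term controlled by dominated convergence using the setwise convergence of $H(\,\cdot\,|s_2,s_3^{(n_k)})$ on a set of full $P'(\,\cdot\,|s_3)$-measure. The only differences are presentational: the paper argues by contradiction and, invoking Theorem~\ref{t3}, restricts attention to open sets $\oo\subseteq\S_1\times\S_2$ (whose sections $\oo_{(s_2)}$ are then open), so that the disintegration formula is applied only to open sets rather than to arbitrary Borel sets; this sidesteps the explicit monotone-class extension of (\ref{eq:H}) and the attendant measurability of $s_2\mapsto H(A_{s_2}|s_2,s_3')$ that you flag as a secondary technical point. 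Your direct argument is slightly more general in scope but requires that extra bookkeeping; the paper's route via open sets is marginally slicker for the same reason.
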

\begin{proof}
According to Theorem~\ref{t3}, if the stochastic kernel $P$ on $\S_1\times\S_2$ given $\S_3$ is not setwise continuous, then there exist $\varepsilon>0$, a nonempty open subset $\oo$ of $\S_1\times\S_2$, and a sequence $\{s_3^{(n)}\}_{n=1,2,\ldots}$ that converges to some $s_3\in\S_3$ such that
\begin{equation}\label{AB}
|P(\oo|s_3^{(n)})-P(\oo|s_3)|\ge \varepsilon\mbox{ for each } n=1,2,\ldots\ .
\end{equation}

Let $\oo_2$ be the projection of $\oo$ on $\S_2$ and $\oo_{(s_2)}:=\{s_1\in \S_1\,:\, (s_1,s_2)\in \oo\}$ be the cut of $\oo$ at $s_2\in \oo_2$.
Since $\oo$ is an open set, the sets $\oo_2$ and $\oo_{(s_2)}$ are open.
Since $P'(ds_2|s_3^{(n)})$ converges in the total variation to $P'(ds_2|s_3),$ for any $s_3\in \S_3$
\begin{equation}\label{eq:A}
\left|\int_{\oo_2}H(\oo_{(s_2)}|s_2,s_3^{(n)})P'(ds_2|s_3^{(n)})-\int_{\oo_2}H(\oo_{(s_2)}|s_2,s_3^{(n)})P'(ds_2|s_3)\right|\to 0 \mbox{ as }n\to \infty.
\end{equation}
According to the assumptions of Corollary~\ref{cor:1}, there exists a set $C^* \in \B(\S_2)$ and a subsequence $\{s_3^{(n_k)}\}_{k=1,2,\ldots}$ of $\{s_3^{(n)}\}_{n=1,2,\ldots}$ such that $P'(C^* | s_3)=1$   and $H(\,\cdot\, | s_2, s_3^{(n_k)})$ converges setwise to $H(\,\cdot\, | s_2, s_3)$ for any $s_2 \in C^*$. In particular, $H(\oo_{(s_2)}|s_2,s_3^{(n_k)})\to H(\oo_{(s_2)}|s_2,s_3)$ for any $s_2\in C^*$. Therefore, the dominated convergence theorem yields
\begin{equation}\label{eq:B}
\int_{\oo_2}\left|H(\oo_{(s_2)}|s_2,s_3^{(n_k)})-H(\oo_{(s_2)}|s_2,s_3)\right|P'(ds_2|s_3)\to 0 \mbox{ as }k\to \infty.
\end{equation}
Formulae \eqref{eq:A} and \eqref{eq:B} imply that as $k\to\infty$
\[
P(\oo|s_3^{(n_k)})=\int_{\oo_2}H(\oo_{(s_2)}|s_2,s_3^{(n_k)})P'(ds_2|s_3^{(n)})\to \int_{\oo_2}H(\oo_{(s_2)}|s_2,s_3)P'(ds_2|s_3)=P(\oo|s_3).
\]
This contradicts  (\ref{AB}). Thus the stochastic kernel $P$ on $\S_1\times\S_2$ given $\S_3$ is setwise continuous.
\end{proof}

The proof of Theorem~\ref{mainthkern} uses several auxiliary results.

\begin{lemma}{\rm(Feinberg et.\,al~\cite[Theorem~5.5]{FKZ}).}
\label{setwise} Let $h$ and $\{h^{(n)}\}_{n = 1,2,\ldots}$ be
Borel-measurable uniformly bounded real-valued functions defined on
a metric space $\S$ and let $\{\mu^{(n)}\}_{n = 1,2,\ldots}$ be a
sequence of probability measures from $\P(\S)$ that converge in  the
total  variation to the measure $\mu\in\P(\S)$. If
\begin{equation}\label{sw1}
\sup_{C\in\B(\S)}\left|\int_{C}h^{(n)}(s)\mu^{(n)}(ds)-
\int_{C}h(s)\mu(ds)\right|\to 0 \quad {\rm as}\quad
n\to\infty,
\end{equation}
then $\{h^{(n)}\}_{n = 1,2,\ldots}$ converges in probability $\mu $ to  $h$ as
$n\to\infty$, and therefore there is a subsequence $\{n_k\}_{k = 1,2,\ldots}$ such that
$\{h^{(n_k)}\}_{k = 1,2,\ldots}$ converges $\mu$-almost surely to $h$.
\end{lemma}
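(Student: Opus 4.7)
The plan is to replace the measure $\mu^{(n)}$ by $\mu$ under the integral (using total variation convergence together with uniform boundedness), then reduce to $L^1$-convergence of $h^{(n)}$ to $h$ with respect to $\mu$, and finally invoke the standard fact that $L^1$-convergence implies convergence in probability (which in turn yields an almost surely convergent subsequence).

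First I would fix $M<\infty$ with $|h|\le M$ and $\sup_n|h^{(n)}|\le M$. For any $C\in\B(\S)$ the function $s\mapsto h^{(n)}(s)\mathbf{I}\{s\in C\}/M$ takes values in $[-1,1]$, so by the definition of $\dist$ in (\ref{eqdefdist}),
\[
\sup_{C\in\B(\S)}\Bigl|\int_C h^{(n)}(s)\mu^{(n)}(ds)-\int_C h^{(n)}(s)\mu(ds)\Bigr|\le M\,\dist(\mu^{(n)},\mu)\to 0
\]
by total variation convergence. Combining this bound with the hypothesis (\ref{sw1}) and the triangle inequality yields
\[
\sup_{C\in\B(\S)}\Bigl|\int_C\bigl(h^{(n)}(s)-h(s)\bigr)\mu(ds)\Bigr|\to 0\qquad\text{as}\quad n\to\infty.
\]

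Next I would extract $L^1(\mu)$-convergence. Let $C_n^+=\{s\in\S:h^{(n)}(s)>h(s)\}$ and $C_n^-=\{s\in\S:h^{(n)}(s)<h(s)\}$; these sets are in $\B(\S)$ because $h$ and $h^{(n)}$ are Borel measurable. Taking $C=C_n^+$ and $C=C_n^-$ in the previous display gives
\[
\int_\S\bigl(h^{(n)}(s)-h(s)\bigr)^+\mu(ds)+\int_\S\bigl(h^{(n)}(s)-h(s)\bigr)^-\mu(ds)=\int_\S\bigl|h^{(n)}(s)-h(s)\bigr|\mu(ds)\to 0.
\]
Hence $h^{(n)}\to h$ in $L^1(\mu)$.

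Finally, Markov's inequality gives for every $\varepsilon>0$ that
\[
\mu\{s\in\S:|h^{(n)}(s)-h(s)|>\varepsilon\}\le\frac{1}{\varepsilon}\int_\S|h^{(n)}(s)-h(s)|\mu(ds)\to 0,
\]
so $h^{(n)}\to h$ in $\mu$-probability. The existence of a subsequence $\{n_k\}$ along which $h^{(n_k)}\to h$ $\mu$-almost surely is then a classical consequence of convergence in probability. I do not anticipate a serious obstacle here; the only point demanding a little care is recognizing that the supremum in (\ref{sw1}) is strong enough to deliver $L^1$-convergence (achieved by choosing $C$ to be the sign set of $h^{(n)}-h$), and that the total variation distance may be used to swap $\mu^{(n)}$ for $\mu$ uniformly in $C$ without losing the bound.
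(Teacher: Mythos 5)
Your proof is correct and complete. The paper does not prove this lemma itself but imports it from Feinberg et al.~\cite[Theorem~5.5]{FKZ}, and your argument --- swapping $\mu^{(n)}$ for $\mu$ via the total variation bound, extracting $L^1(\mu)$-convergence by testing against the sign sets of $h^{(n)}-h$, and then passing to convergence in probability and an almost surely convergent subsequence --- is exactly the standard route taken in that reference, so there is nothing to add.
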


 Let $\A_1$ be the family of all subsets of $\S_1$ that are finite unions of sets from the countable base $\tau_b$ of the topology on $\S_1$ satisfying the conditions of
 Theorem~\ref{mainthkern}, and $\A_2$ be the family of all subsets $B$ of $\S_1$ such that $B = \tilde{\oo} \setminus \oo'$ with $\tilde{\oo} \in \tau_b$ and $\oo' \in \A_1$.
%for any sequence $\{s_3^n\}_{n \ge 1}$ from $\S_3$ that converges to $s_3$.
\begin{lemma}
\label{b1b2}
Let the assumptions of Theorem~\ref{mainthkern} hold for some $s_3 \in \S_3$. Then, for any subset $B \in \A_2$, the family of functions $\mathcal{P}_{B}$ is equicontinuous at  the point $s_3 \in \S_3$.
\end{lemma}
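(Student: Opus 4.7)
The plan is to reduce the set $B = \tilde{\oo} \setminus \oo'$ to a signed combination of finite intersections of sets from $\tau_b$, then combine the equicontinuity of the corresponding families $\mathcal{P}_{\cdot}$ via the triangle inequality. By Lemma~\ref{lem:PB}, equicontinuity of $\mathcal{P}_B$ at $s_3$ is equivalent to
\[
\sup_{C\in\B(\S_2)}\bigl|P(B\times C\,|\,s_3^{(n)})-P(B\times C\,|\,s_3)\bigr|\to 0\qquad \text{as}\quad s_3^{(n)}\to s_3,
\]
and the analogous criterion holds for each subset of $\S_1$ used below, so I will work throughout with these suprema.

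First I would write $\oo'=\bigcup_{i=1}^N \oo_i$ with $\oo_i\in\tau_b$, and observe that, since $B\subseteq\tilde{\oo}$,
\[
P(B\times C\,|\,s_3)=P(\tilde{\oo}\times C\,|\,s_3)-P((\tilde{\oo}\cap\oo')\times C\,|\,s_3).
\]
For the second term I would apply inclusion--exclusion to $\tilde{\oo}\cap\oo'=\bigcup_{i=1}^N(\tilde{\oo}\cap\oo_i)$, obtaining
\[
P((\tilde{\oo}\cap\oo')\times C\,|\,s_3)=\sum_{\emptyset\neq J\subseteq\{1,\dots,N\}}(-1)^{|J|+1}\,P\Bigl(\bigl(\tilde{\oo}\cap\bigcap_{j\in J}\oo_j\bigr)\times C\,\Big|\,s_3\Bigr).
\]
Each set appearing on the right is a finite intersection of elements of $\tau_b$ (including $\tilde{\oo}\in\tau_b$ itself, which is a ``single-term'' intersection). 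By assumption~(ii) of Theorem~\ref{mainthkern}, the family $\mathcal{P}_{\tilde{\oo}\cap\bigcap_{j\in J}\oo_j}$ is equicontinuous at $s_3$ for every such $J$, so by Lemma~\ref{lem:PB} the sup over $C\in\B(\S_2)$ of the increment of each of these terms between $s_3^{(n)}$ and $s_3$ tends to $0$.

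Finally, the triangle inequality gives
\[
\sup_{C\in\B(\S_2)}\bigl|P(B\times C\,|\,s_3^{(n)})-P(B\times C\,|\,s_3)\bigr|
\]
\[
\le\sup_C\bigl|P(\tilde{\oo}\times C\,|\,s_3^{(n)})-P(\tilde{\oo}\times C\,|\,s_3)\bigr|+\sum_{\emptyset\neq J\subseteq\{1,\dots,N\}}\sup_C\Bigl|P\bigl((\tilde{\oo}\cap\bigcap_{j\in J}\oo_j)\times C\,\big|\,s_3^{(n)}\bigr)-P\bigl((\tilde{\oo}\cap\bigcap_{j\in J}\oo_j)\times C\,\big|\,s_3\bigr)\Bigr|,
\]
and each term on the right tends to $0$ as $s_3^{(n)}\to s_3$ by the preceding paragraph. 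Another application of Lemma~\ref{lem:PB} then yields equicontinuity of $\mathcal{P}_B$ at $s_3$. The only non-routine step is the algebraic reduction to finite intersections; once that is done, the assumed equicontinuity over finite intersections and the finiteness of $N$ make the triangle inequality argument immediate.
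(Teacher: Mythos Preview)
Your proof is correct and uses essentially the same idea as the paper's: reduce $B=\tilde{\oo}\setminus\oo'$ to a finite signed combination of finite intersections of base elements via inclusion--exclusion, then apply the triangle inequality together with Lemma~\ref{lem:PB}. The only organizational difference is that the paper first establishes equicontinuity of $\mathcal{P}_\oo$ for all $\oo\in\A_1$ (finite unions) and then bounds $\mathcal{P}_B$ by $\mathcal{P}_{\oo'}$ and $\mathcal{P}_{\tilde{\oo}\cup\oo'}$, whereas you go directly, decomposing via $\tilde{\oo}$ and $\tilde{\oo}\cap\oo'$ and expanding the latter by inclusion--exclusion with $\tilde{\oo}$ already intersected in; your route is slightly more economical but the substance is the same.
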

\begin{proof} Fix an arbitrary $s_3\in\S_3.$
Observe that, if  for all $\oo \in \A_1$ the family of functions $\mathcal{P}_{\oo}$ is equicontinuous at  the point $s_3 \in \S_3$, then for any subset $B = \tilde{\oo} \setminus \oo'$ of $\S_1$ with $\tilde{\oo} \in \tau_b$ and $\oo' \in \A_1$,  the family of functions $\mathcal{P}_{B}$ is equicontinuous at  the point $s_3 \in \S_3$. Indeed, according to Lemma~\ref{lem:PB}, for all $s_3, s_3^{(n)} \in \S_3$, $n = 1,2,\ldots,$ such that $s_3^{(n)} \to s_3$ as $n \to \infty$,
\begin{gather*}
\sup_{C \in \B(\S_2)}|P(B \times C | s_3^{(n)}) - P(B \times C | s_3)| = \sup_{C \in \B(\S_1)}|P((\tilde{\oo}\setminus \oo')\times C  | s_3^{(n)}) - P( (\tilde{\oo}\setminus \oo')\times C  | s_3)|\\
\le \sup_{C \in \B(\S_2)}|P(\oo' \times C   | s_3^{(n)}) - P( \oo' \times C | s_3)| + \sup_{C \in \B(\S_2)}|P(( \tilde{\oo} \cup \oo') \times C   | s_3^{(n)}) - P((\tilde{\oo} \cup \oo') \times C   | s_3)|.
\end{gather*}
The above inequality, the assumption that \eqref{eq:EC} holds for all $\oo \in \A_1$ and for all  $s_3, s_3^{(n)} \in \S_3$, $n = 1,2,\ldots$, such that $s_3^{(n)}  \to s_3$ as $n \to \infty$, and the property that if $\oo' \in \A_1$ then $\tilde{\oo} \cup \oo' \in \A_1$ for all $\tilde{\oo} \in \tau_b$ imply that \eqref{eq:EC} holds for any subset $B \in \A_2$,
%\[\sup_{C \in \B(\S_2)}|P(B \times C | s_3^n) - P(B \times C | s_3)| \to 0 \qquad \mbox{ as }  \qquad s_3^n \to s_3,\]
that is, the family of functions $\mathcal{P}_{B}$ is equicontinuous at  the point $s_3 \in \S_3$. The rest of the proof establishes that,  for each $\oo \in \A_1$, the family of functions $\mathcal{P}_{\oo}$ is equicontinuous at  the point $s_3 \in \S_3$.

Let $\tau_b = \{\oo^{(j)}\}_{j = 1,2,\ldots}$. Consider an arbitrary $\oo \in \A_1$.  Then $\oo = \cup_{i = 1}^{N} \oo^{(j_i)}$ for some $N = 1,2,\ldots$, where $\oo^{(j_i)} \in \tau_b$, $i = 1,2,\ldots, N$. Let $\A^N = \{\cap_{m = 1}^k \oo^{(i_m)}: \{i_1, i_2, \ldots, i_k\} \subseteq \{j_1, j_2, \ldots j_N\}\}$ be the finite set of possible intersections of $\oo^{(j_1)}, \ldots, \oo^{(j_N)}$. The principle of inclusion-exclusion implies that for $\oo = \cup_{i = 1}^{N} \oo^{(j_i)}$, $C\in \S_2$, and $s_3, s_3^{(n)} \in \S_3$,
\[|P(\oo\times C  | s_3) - P(\oo\times C  | s_3^{(n)})| \le \sum_{D \in \A^N} |P( D\times C | s_3) - P( D \times C| s_3^{(n)})|.\]
The above inequality and the assumption of Theorem~\ref{mainthkern} regarding finite intersections of the elements of the base $\tau_b$ imply that, for each $\oo \in \A_1$, the family of functions $\mathcal{P}_{\oo}$ is equicontinuous at  the point $s_3 \in \S_3$.
\end{proof}

\begin{proof}[Proof of Theorem~\ref{mainthkern}]
Let $\{s_3^{(n)}\}_{n =
1,2,\ldots}$ be a sequence from $\S_3$ that converges to $s_3\in \S_3$. According to Theorem~\ref{t1},
\eqref{result} holds if there exists a subsequence $\{n_m\}_{m=
1,2,\ldots}$ and a set $C^* \in \B(\S_2)$ such that for all open subsets $\oo$ in $\S_1$
\begin{equation}
\label{special}
%\lim_{k \to \infty}H( B \,|\, y, s_3^n ) = H(B \,|\, y, s_3) \mbox{ for all open subsets } B  \mbox{ of } \S_1 \mbox{ and  if }\, P'(C^* | s_3)  = 1.
P'(C^* | s_3)  = 1 \quad \mbox{ and } \quad \ilim\limits_{m\to\infty} H( \oo \,|\, s_2, s_3^{(n_m)} ) \ge H(\oo \,|\, s_2, s_3) \quad \mbox{ for all } \quad s_2 \in C^*.
\end{equation}
 The rest of the proof establishes the existence of a subsequence  $\{s_3^{(n_m)}\}_{m= 1,2,\ldots}$
of the sequence $\{s_3^{(n)}\}_{n = 1,2,\ldots}$ and a set $C^* \in \B(\S_2)$ such that \eqref{special} holds for each open subset $\oo$ of $\S_1$.
%
%
%
%
%\begin{equation}
%\label{eq:SC}
%\ilim\limits_{m\to\infty} H( \oo \,|\, s_2, s_3^{(k_m)} ) \ge H(\oo \,|\, s_2, s_3) \quad \mbox{ for all }\quad s_2 \in C^*.
%\end{equation}
%The rest of the proof establishes the existence of a subsequence $\{s_3^{(k_m)}\}_{m= 1,2,\ldots}$ for every subsequence $\{s_3^{(n_k)}\}_{k= 1,2,\ldots}$ of the sequence $\{s_3^{(n)}\}_{n = 1,2,\ldots}$ such that \eqref{eq:SC} holds for all open subsets $\oo$ of $\S_1$.

Let $\A_1$ and $\A_2$ be the families of subsets of $\S_1$ as defined before Lemma~\ref{b1b2}. Observe that: (i) both $\A_1$ and $\A_2$ are countable, (ii) every open subset $\oo$ of $\S_1$ can be represented as
\begin{equation}
\label{partition}
\oo = \bigcup_{j = 1,2,\ldots} \oo^{(j,1)}  = \bigcup_{j = 1,2,\ldots} B^{(j,1)}, \quad  \mbox{ for some } \quad \oo^{(j,1)} \in \tau_b, j = 1,2,\ldots,
\end{equation}
where $B^{(j,1)} = \oo^{(j,1)} \setminus (\cup_{i = 1}^{j-1} \oo^{(i,1)})$ are disjoint elements of $\A_2$ (it is allowed that $\oo^{(j,1)} = \emptyset$ or $B^{(j,1)} = \emptyset$ for some $j = 1,2,\ldots$).

To prove \eqref{special} for all open subsets $\oo$ of $\S_1$, we first show that \eqref{special} holds for all $\oo \in \A_2$.
%Lemma~\ref{b1b2} implies that for all $B \in \A_2$ the family of functions $\mathcal{P}_{B}$ given by \eqref{eq:family} is equicontinuous at all the points $s_3 \in \S_3$, that is, \eqref{eq:EC} holds for any subset $B \in \A$ and $s_3 \in \S_3$. This fact along with \eqref{eq:H} implies that
From Lemmas~\ref{lem:PB}, \ref{b1b2} and \eqref{eq:H},
\begin{equation}
\label{ECO}
\lim_{n \to \infty}\sup_{C\in\B(\S_2)}\left|\int_{C} H(B|s_2, s_3^{(n)}) P'(ds_2|s_3^{(n)})- \int_{C}H(B|s_2, s_3) P'(ds_2|s_3)\right| = 0, \quad B \in \A_2. %\quad \mbox{ as } \quad n \to \infty. %,\quadj=1,2,\dots.
\end{equation}
Since the set $\A_2$ is countable, let $\A_2 := \{B^{(j)}: j = 1,2,\ldots\}$. Choose a subsequence $\{s_3^{(n_k)}\}_{k= 1,2,\ldots}$ of the sequence $\{s_3^{(n)}\}_{n = 1,2,\ldots}$. Denote $s^{(n,0)}=s_3^{(n)}$ for all $n=1,2,\ldots\ .$ For $j = 1,2,\ldots$,  from \eqref{ECO}, Lemma~\ref{setwise}, applied with $s = s_2$, $h^{(n)}(s) = H(B^{(j)} |s_2, s^{(n, j-1)})$, $\mu^{(n)}(\cdot) = P'(\,\cdot\,| s^{(n, j-1)})$, $h(s) = H(B^{(j)} |s_2, s_3)$, and $\mu(\cdot) = P'(\,\cdot\,| s_3)$, there exists a subsequence $\{s^{(n, j)}\}_{n = 1,2,\ldots}$ of the sequence $\{s^{(n, j-1)}\}_{n = 1,2,\ldots}$ and a set $C^*_j\in \B(\S_2)$ such that
\begin{equation}
\label{B-i}
\lim_{n \to \infty}H( B^{(j)} | s_2, s^{(n, j)}) = H(B^{(j)}|s_2, s_3) \quad \mbox{ for all } \quad s_2 \in C_j^*.
\end{equation}
Let $C^*=\cap_{j=1,2,\ldots} C_j^*$. Observe that $P'(C^*|s_3)=1$. Let $s_3^{(n_m)}=s^{(m, m)},$ $m=1,2,\ldots\ .$ As follows from Cantor's diagonal argument,
\eqref{special} holds with $\oo=B^{(j)}$ for all $j = 1, 2, \ldots\ .$ In other words, \eqref{special} is proved for all $\oo \in \A_2$.

Let $\oo$ be an arbitrary open set in $\S_1$ and $B^{(1,1)}, B^{(2,1)}, \ldots$ be disjoint elements of $\A_2$ satisfying \eqref{partition}. Then the countable additivity of probability measures implies that, for all $s_2 \in C^*$,
\begin{multline*}
\begin{aligned}
\ilim\limits_{m\to\infty}H(\oo|s_2, s_3^{(n_m)}) &=\ilim\limits_{m\to\infty}\sum_{j=1,2,\ldots} H(B^{(j,1)}|s_2, s_3^{(n_m)}) \ge \sum_{j=1,2,\ldots}\ilim\limits_{m\to\infty}H(B^{(j,1)}|s_2, s_3^{(n_m)}) \\
&=\sum_{j=1,2,\ldots} H(B^{(j,1)}|s_2, s_3)=H(\oo|s_2, s_3).
\end{aligned}
\end{multline*}
Therefore, \eqref{special} holds for all open subsets $\oo$ in $\S_1$.
\end{proof}

\begin{example}\label{exa:MDM} (Stochastic kernel $P$ on $\S_1\times\S_2$ given $\S_3$ satisfies assumptions of Theorem~\ref{mainthkern}, but it is not setwise continuous and it does not satisfy the assumption  of Corollary~\ref{Cormainkern}.)
{\rm Let $\S_1=\R^1$, $\S_2=\{1\}$,
$\S_3=\{1^{-1},2^{-1},\ldots,0\}$, $\tau_B$ be the family consisting
of an empty set, $\R^1,$ and of all the open intervals on $\R^1$ with
rational ends, and $P(B\times C|s_3)=\I\{\sqrt{2}+s_3\in B\}\I\{1\in
C\}$, $B\in\B(\S_1)$, $C\in\B(\S_2)$. Then $P'(C)=\I\{1\in C\}$,
$H(B|s_2,s_3)=\I\{\sqrt{2}+s_3\in B\}$, $B\in\B(\S_1)$,
$C\in\B(\S_2)$. Let
$\tau_b$ be the countable base of the topology on $\R^1$ generated by
the Euclidean metric described in Example~\ref{ex2}. The family $\tau_b$ is closed under finite
intersections, and for any $\oo\in \tau_b$ the family of functions
$\mathcal{P}_\oo$ is equicontinuous at all the points $s_3 \in
\S_3$. Therefore, assumptions of Theorem~\ref{mainthkern} hold.

Note that the function $P(B\times C|s_3)$ is not continuous at the
point $s_3=0,$ when $B=\R^1\setminus\{\sqrt{2}\}$ and $C=\S_3$.
Therefore, the family $\mathcal{P}_B$ is not equicontinuous at
the point $s_3 =0,$ and the assumption of
Corollary~\ref{Cormainkern} do not hold. Moreover, the sequence
$\{H(B|1,\frac1n)\}_{n=1,2,\ldots}$ (and any its subsequence) does
not converge to $H(B|1,0)$ and, therefore, the setwise convergence assumption from
Corollary~\ref{cor:1} do not hold. }\hfill$\Box$
\end{example}

\section{Partially Observable Markov Decision Processes}\label{S4}
Convergence properties of probability measures and relevant
continuity properties of transition probabilities are broadly used
in  mathematical methods of stochastic control.  In this section,
we describe the results for a Bayesian sequential decision model,
a POMDP.  For POMDPs, posterior probabilities of states of the
process form sufficient statistics; see e.g.,
Hern\'{a}ndez-Lerma~\cite[p. 89]{HL}.  In terms of Markov Decision
Processes, this well-known fact means that it is possible to
construct an MDP, called a Completely Observable Markov Decision
Process (COMDP), whose state space is the space of probability
measures on the original state space.    If an optimal policy is
found for a COMDP, it is easy to compute an optimal policy for the
original POMDP.  However, except the cases of finite state spaces
(Smallwood and Sondik~\cite{SS}, Sondik~\cite{So}), MDMIIs with
transition probabilities having densities (Rieder~\cite{Ri},
B\"auerle and Rieder~\cite[Chapter 5]{BR}),  models explicitly
defined by equations for continuous random variables
(Striebel~\cite{St}, Bensoussan~\cite{Be}), and numerous
particular problems studied in the literature, until recently very
little had been known about the existence and characterizations of
optimal policies for POMDPs and their COMDPs. The main difficulty
is that the transition probability for a COMDP is defined via the
Bayes formula presented in formula (\ref{3.1}) below, and the
explicit forms of the Bayes formula are known either for discrete
events or for continuous random variables; see Shityaev~\cite[p. 231]{Sh}.
Recently Feinberg et al.~\cite{FKZ} established sufficient
conditions for the existence of optimal policies and their
characterization for POMDPs with Borel state, action, and
observation spaces.

In this section we define POMDPs,  explain their reduction to
COMDPs,  survey some of the results from Feinberg et
al.~\cite{FKZ}, and present the condition on joint distributions of posterior distributions and observations that implies weak continuity of
transition probabilities for the COMDP. In the following section, we describe a more
particular model, the MDMII, and apply Corollary~\ref{cor:1} and results of this section to
it.

Let $\X$, $\Y$, and $\A$ be %EF10.5
Borel subsets of Polish spaces, %(a Polish space is a complete
%separable metric space), 
$P(dx'|x,a)$ be a stochastic kernel on
$\X$ given $\X\times\A$, $Q(dy| a,x)$ be a stochastic kernel on
$\Y$ given $\A\times\X$, $Q_0(dy|x)$ be a stochastic kernel on
$\Y$ given $\X$, $p$ be a probability distribution on $\X$,
$c:\X\times\A\to {\bar\R}^1=\mathbb{R}^1\cup\{+\infty\}$ be a bounded below
Borel function on $\X\times\A.$ %where $\mathbb{R}$ is a real line.

A %\textit{partially observable Markov decision process}
{\it POMDP} is specified by a tuple $(\X,\Y,\A,P,Q,c)$, where $\X$
is the \textit{state space}, $\Y$ is the \textit{observation set},
$\A$ is the \textit{action} \textit{set}, $P(dx'|x,a)$ is the
\textit{state transition law}, $Q(dy| a,x)$ is the
\textit{observation stochastic kernel}, $c:\X\times\A\to {\bar\R}^1$ is the
\textit{one-step cost}.

The partially observable Markov decision process evolves as
follows: (i) at time $t=0$, the initial unobservable state $x_0$
has a given prior distribution $p$; (ii) the initial observation
$y_0$ is generated according to the initial observation stochastic kernel
$Q_0(\,\cdot\,|x_0)$; (iii) at each time epoch $t=0,1,\ldots,$ if
the state of the system is $x_t\in\X$ and the decision-maker
chooses an action $a_t\in \A$, then the cost $c(x_t,a_t)$ is
incurred; (iv) the system moves to a state $x_{t+1}$ according to
the transition law $P(\,\cdot\,|x_t,a_t)$, $t=0,1,\ldots$; %EF0903
(v) an observation $y_{t+1}\in\Y$ is generated by the
observation stochastic kernel  $Q(\,\cdot\,|a_t,x_{t+1})$, $t=0,1,\ldots\ .$ %and
%the observation $y_0$ is generated by the kernel %EF0903
%$Q_0(\,\cdot\,|x_0).$ %EF 10.8
%\end{itemize}

Define the \textit{observable histories}: $h_0:=(p,y_0)\in \H_0$
and $h_t:=(p,y_0,a_0,\ldots,y_{t-1}, a_{t-1}, y_t)\in\H_t$ for all
$t=1,2,\dots$,
%\begin{equation}\label{2.4}
%h_t:=(p,y_0,a_0,\ldots,y_{t-1}, a_{t-1}, y_t)\in H_t\mbox{ for all }t=1,2,\dots,
%\end{equation}
where $\H_0:=\P(\X)\times \Y$ and $\H_t:=\H_{t-1}\times \A\times
\Y$ if $t=1,2,\dots$. A \textit{policy} $\pi$ %EF0903
for the POMDP is defined as a sequence
$\pi=\{\pi_t\}_{t=0,1,\ldots}$ %such that, for each $n=0,1,\dots,$
%$\pi_t$ is a
of stochastic kernels $\pi_t$ on $\A$ given $\H_t$. A policy $\pi$ %EF0903
is called \textit{nonrandomized}, if each probability measure
$\pi_t(\,\cdot\,|h_t)$ is concentrated at one point.  The
\textit{set of all policies} is denoted by $\Pi$. The Ionescu
Tulcea theorem (Bertsekas and Shreve \cite[pp. 140-141]{BS} or
Hern\'andez-Lerma and Lasserre \cite[p.178]{HLerma1}) implies that
a policy $\pi\in \Pi$ and an initial distribution $p\in \P(\X)$,
together with the stochastic kernels $P$, $Q$ and $Q_0$, determine
a unique probability measure $P_{p}^\pi$ on the set of all
trajectories
$%\mathbb{H}_{\infty}=
(\X\times\Y\times \mathbb{A})^{\infty}$ endowed with the
$\sigma$-field defined by the products of Borel
$\sigma$-fields  $\B(\X)$, $\B(\Y)$, and $\B(\mathbb{A})$. %EF0903 respectively.
The expectation with respect to this probability measure is
denoted by $\E_{p}^\pi$.

%Let us specify a performance criterion.
For a finite horizon $T=0,1,...,$ %let us define
the \textit{expected total discounted costs} are
\begin{equation}\label{eq1}
V_{T,\alpha}^{\pi}(p):=\mathbb{E}_p^{\pi}\sum\limits_{t=0}^{T-1}\alpha^tc(x_t,a_t),\qquad\qquad
p\in \P(\X),\,\pi\in\Pi,
\end{equation}
where $\alpha\ge 0$ is the discount factor,
$V_{0,\alpha}^{\pi}(p)=0.$ Consider the following assumptions. \vskip 0.9 ex %EF0903 When $N=+\infty$, we always
%assume that at least one of the following two assumptions
%holds:
%EF09.11 until Since... \begin{itemize}

\noindent\textbf{Assumption (D)}. $c$ is bounded below on
$\X\times\A$ and
  $\alpha\in (0,1)$.

\noindent\textbf{Assumption (P)}. $c$ is nonnegative on
$\X\times\A$ and $\alpha=1$.\vskip 0.9 ex
%\end{itemize}

When $T=\infty,$ formula (\ref{eq1}) defines the \textit{infinite
horizon expected total discounted cost}, and we denote it by
$V_\alpha^\pi(p).$ For any function $g^{\pi}(p)$, including
$g^{\pi}(p)=V_{T,\alpha}^{\pi}(p)$ and
$g^{\pi}(p)=V_{\alpha}^{\pi}(p)$, define the \textit{optimal
values}
\begin{equation*} g(p):=\inf\limits_{\pi\in \Pi}g^{\pi}(p), \qquad
\ p\in\P(\X).
\end{equation*} %where $\Pi$ is \textit{the set of all policies}.
A policy $\pi$ is called \textit{optimal} for the respective
criterion, if $g^{\pi}(p)=g(p)$ for all $p\in \P(\X).$ For
$g^\pi=V_{T,\alpha}^\pi$, the optimal policy is called
\emph{$T$-horizon discount-optimal}; for $g^\pi=V_{\alpha}^\pi$,
it is called \emph{discount-optimal}.

%In this paper, for the expected total costs and objective values
%we use  similar notations for POMDPs and for Completely Observable
%Markov Decision Processes (COMDPs). However, we reserve the symbol
%$V$ for POMDPs and the notation ${\bar v}$ for COMDPs. So, in
%addition to the notations $V_{T,\alpha}^\pi,$ $V_{\alpha}^\pi,$
%$V_{T,\alpha},$ and $V_{\alpha}$ introduced for POMDPs, we shall
%use the notations %$v_{T,\alpha}^\pi,$ $v_{\alpha}^\pi,$
%%$v_{T,\alpha}$, $v_{\alpha}$ and
%${\bar v}_{T,\alpha}^\pi,$ ${\bar
%v}_{\alpha}^\pi,$ ${\bar v}_{T,\alpha}$, ${\bar v}_{\alpha}$ for
%the similar objects for MDPs
%and COMDPs, respectively. %EF0903

%EF09/11
We recall that a function $c$ defined  on $\X\times\A$ with values in ${\bar \R}^1$ is inf-compact %(or lower semi-compact)
if the set $\{(x,a)\in \X\times\A:\, c(x,a)\le \lambda\}$ is
compact  for any finite number $\lambda.$ A function $c$ defined
on $\X\times \A$ with values in ${\bar \R}^1$ is called $\K$-inf-compact on
$\X\times\A$, if for any compact set $K\subseteq\X$, the function
$c:K\times\A\to {\bar \R}^1$ defined on $K\times\A$ is inf-compact;
Feinberg et al.~\cite[Definition 1.1]{FKV, FKN}. According to
Feinberg et al.~\cite[Lemma 2.5]{FKN}, a bounded below function
$c$ is $\K$-inf-compact on the product of metric spaces $\X$ and
$\A$ if and only if it satisfies the following two conditions:

(a) $c$ is lower semi-continuous;

(b) if a sequence $\{x^{(n)} \}_{n=1,2,\ldots}$ with values in
$\X$ converges and its limit $x$ belongs to $\X$ then any sequence
$\{a^{(n)} \}_{n=1,2,\ldots}$ with $a^{(n)}\in \A$,
$n=1,2,\ldots,$ satisfying the condition that the sequence
$\{c(x^{(n)},a^{(n)}) \}_{n=1,2,\ldots}$ is bounded above, has a
limit point $a\in\A.$

For a POMDP $(\X,\Y,\A,P,Q,c)$, consider the  MDP $(\X,\A,P,c)$,
in which all the states are observable. An MDP can be viewed as a
particular POMDP with $\Y=\X$ and $Q(B|a,x)=Q(B|x)={\bf I}\{x\in
B\}$ for all $x\in\X,$ $a\in \A$, and $B\in{\mathcal B}(\X)$. In
addition, for an MDP an initial state is observable.  Thus for an
MDP an initial state  $x$ is considered instead of the initial
distribution $p.$ In fact, this MDP possesses a special property
that action sets at all the states are equal.

It is well known that the analysis and optimization of an POMDP
can be reduced to the analysis and optimization to a specially
constructed MDPs called a COMDP.  The states of the COMDP are
posterior state distributions of the original POMDP. In order to
find an optimal policy for POMDP, it is sufficient to find such a
policy for the COMDP, and then it is easy to construct an optimal
policy for the COMDPs (see Bertsekas and Shreve \cite[Section
10.3]{BS}, Dynkin and Yushkevich \cite[Chapter 8]{DY},
Hern\'{a}ndez-Lerma \cite[p. 87]{HL}, Yushkevich \cite{Yu} or
Rhenius \cite{Rh} for details).  However, little is known about
the existence of optimal policies for COMDPs and how to find them
when the state, observation, and action sets are Borel spaces. The
rest of this section presents recent results from Feinberg et
al.~\cite{FKZ} on the existence optimal policies and their
computation for COMDPs and therefore for POMDPs.

% These theorems provide sufficient
%conditions for the existence of optimal policies for COMDPs and
%therefore for POMDPs with expected total costs, as well as
%optimality equations and convergence of value iterations for
%COMDPs.  These conditions consist of two major components: the
%conditions for the existence of optimal policies for MDPs and
%additional conditions on the POMDP.
% Theorem~\ref{main} states that the continuity
%of the observation kernel $Q$ in the total variation is the
%additional sufficient condition under which there is a stationary
%optimal policy for the COMDP, and this policy satisfies the
%optimality equations and can be found by value iterations. In
%particular, the continuity of $Q$ in the total variation and the
%weak continuity of $P$ imply the setwise continuity of the
%stochastic kernel $R'$ defined in \eqref{3.5} and the validity of
%Assumption {\bf {\bf(H)}} introduced in this section;
%Theorem~\ref{t:totalvar}. These two additional properties imply
%the weak continuity of the transition probability $q$ for the
%COMDP (Theorem~\ref{th:contqqq2}) and eventually the
%desired properties of the COMDP; Theorem~\ref{main1}. %EF0403
%
%This section starts with the description of known results on the
%general reduction of a POMDP to the COMDP; Bertsekas and Shreve
%\cite[Section 10.3]{BS}, Dynkin and Yushkevich \cite[Chapter
%8]{DY}, Hern\'{a}ndez-Lerma \cite[Chapter 4]{HLerma}, Rhenius
%\cite{Rh}, and Yushkevich \cite{Yu}.  To simplify notations, we
%sometimes drop the time parameter.

Our next goal is to define the transition probability $q$ for the
COMDP presented in (\ref{3.7}).  Given a posterior distribution
$z$ of the state $x$ at time epoch $t=0,1,\ldots$ and given an
action $a$ selected at epoch $t$, denote by $R(B\times C|z,a) $
the joint probability that the state at time $(t+1)$ belongs to
the set $B\in {\mathcal B}(\X)$ and the observation at time $t+1$
belongs to the set $C\in {\mathcal B}(\Y)$,
\begin{equation}\label{3.3}
R(B\times C|z,a):=\int_{\X}\int_{B}Q(C|a,x')P(dx'|x,a)z(dx),\ B\in
\mathcal{B}(\X),\ C\in \mathcal{B}(\Y),\ z\in\P(\X),\ a\in \A.
\end{equation}
%where
Observe that $R$ is a stochastic kernel on $\X\times\Y$ given
${\P}(\X)\times \A$;  see Bertsekas and Shreve \cite[Section
10.3]{BS}, Dynkin and Yushkevich \cite[Chapter 8]{DY},
Hern\'{a}ndez-Lerma \cite[p. 87]{HL},  Yushkevich \cite{Yu}, or
Rhenius \cite{Rh} for details.
 The  probability that the observation $y$ at time $t+1$
belongs to the set $C\in\B(\Y)$, given that at time $t$ the
posterior state probability is $z$ and selected action is $a,$ is
%defined by the stochastic kernel
%\begin{equation}\label{3.5}
$R'(C|z,a):=R(\X\times C|z,a)$,
$C\in \mathcal{B}(\Y)$, $z\in\P(\X)$, $a\in\A$.
Observe that $R'$ is a stochastic kernel on $\Y$ given
${\P}(\X)\times \A.$ By Bertsekas and Shreve~\cite[Proposition
7.27]{BS}, there exist a stochastic kernel $H$ on $\X$ given
${\P}(\X)\times \A\times\Y$ such that
\begin{equation}\label{3.4}
R(B\times C|z,a)=\int_{C}H(B|z,a,y)R'(dy|z,a),\quad B\in
\mathcal{B}(\X),\  C\in \mathcal{B}(\Y),\ z\in\P(\X),\ a\in \A.
\end{equation}

The stochastic kernel $H(\,\cdot\,|z,a,y)$ defines a measurable
mapping $H:\,\P(\X)\times \A\times \Y \to\P(\X)$, where
$H(z,a,y)(\,\cdot\,)=H(\,\cdot\,|z,a,y).$ For each pair $(z,a)\in
\P(\X)\times\A$, the mapping $H(z,a,\cdot):\Y\to\P(\X)$ is defined
$R'(\,\cdot\,|z,a)$-almost surely  uniquely in $y\in\Y$; Bertsekas
and Shreve \cite[Corollary~7.27.1]{BS} or Dynkin and Yushkevich
\cite[Appendix 4.4]{DY}. %EF0403 Hern\'{a}ndez-Lerma \cite{HL}.
For a posterior distribution $z_t\in \P(\X)$, action $a_t\in \A$,
and an observation $y_{t+1}\in\Y,$ the posterior distribution
$z_{t+1}\in\P(\X)$ is
\begin{equation}\label{3.1}
z_{t+1}=H(z_t,a_t,y_{t+1}).
\end{equation}
However, the observation $y_{t+1}$ is not available in the COMDP
model, and therefore $y_{t+1}$ is a random variable with the
distribution $R'(\,\cdot\,|z_t,a_t)$, and the right-hand side of
(\ref{3.1}) maps $(z_t,a_t)\in \P(\X)\times\A$ to $\P(\P(\X)).$
Thus, $z_{t+1}$ is a random variable with values in  $\P(\X)$
whose distribution is defined uniquely by the stochastic kernel
\begin{equation}\label{3.7}
q(D|z,a):=\int_{\Y}\h\{H(z,a,y)\in D\}R'(dy|z,a),\quad D\in
\mathcal{B}(\P(\X)),\ z\in \P(\X),\ a\in\A;
\end{equation}
Hern\'andez-Lerma~\cite[p. 87]{HL}. The  particular choice of a
stochastic kernel $H$ satisfying (\ref{3.4}) does not effect the
definition of $q$ from (\ref{3.7}), since for each pair $(z,a)\in
\P(\X)\times\A$, the mapping $H(z,a,\cdot):\Y\to\P(\X)$ is defined
$R'(\,\cdot\,|z,a)$-almost surely uniquely in $y\in\Y$. %;

The COMDP is defined as an MDP with the parameters
($\P(\X)$,$\A$,$q$,$\c$), where
%\begin{itemize}
%\item[(i)]
(i) $\P(\X)$ is the state space; %\item[(ii)]
(ii) $\A$ is the
action set available at all states $z\in\P(\X)$; %\item[(iii)]
(iii) the
 one-step cost function $\c:\P(\X)\times\A\to{\bar \R}^1$, defined
\begin{equation}\label{eq:c}
\c(z,a):=\int_{\X}c(x,a)z(dx), \quad z\in\P(\X),\, a\in\A;
\end{equation}
% \item[(iv)]
(iv) transition probabilities $q$ on $\P(\X)$
given $\P(\X)\times \A$ defined in (\ref{3.7}).

For an MDP, a nonrandomized policy is called \textit{Markov}, if
all decisions depend only on the current state and time. A Markov
policy is called \textit{stationary}, if all decisions depend only
on current states.

For MDPs, Feinberg et al.~\cite[Theorem 2]{FKN} %EF0903
provides general conditions for the existence of optimal policies,
validity of optimality equations, and convergence of value
iterations. Here we formulate these conditions for an MDP whose
action sets in all states are equal, and then
Theorem~\ref{teor4.3} adapts Feinberg et al.~\cite[Theorem 2]{FKN}
to POMDPs.

\noindent\textbf{Assumption (${\rm \bf W^*}$)} (cf. Feinberg et al.~\cite{FKZ} and Lemma 2.5 in \cite{FKN}). %EF10.5
(i) the function $c$ is $\K$-inf-compact on $\X\times\A$;
(ii) the transition probability  $P(\,\cdot\,|x,a)$ is weakly
continuous in $(x,a)\in \X\times\A$.

For the COMDP, Assumption \textbf{(${\rm \bf W^*}$)}
%can be rewritten in
has the following form:
(i)  $\c$ is $\K$-inf-compact on $\P(\X)\times\A$;
%(ii) if a sequence $\{p_n \}_{n=1,2,\ldots}$ with values in $\P(\X)$
%converges and its limit $p$ belongs to $\P(\X)$ then any sequence
%$\{a_n \}_{n=1,2,\ldots}$ with $a_n\in \A$, $n=1,2,\ldots,$
%satisfying the condition that the sequence $\{\c(p_n,a_n)
%\}_{n=1,2,\ldots}$ is bounded above, has a limit point $a\in\A;$
 (ii) the transition probability  $q(\,\cdot\,|z,a)$ is weakly
continuous in $(z,a)\in \P(\X)\times\A$.

In the following theorem, the notation $\bar v$ is used for the
expected total costs for COMDPs instead the symbol $V$ used for
POMDPs. The following theorem follows directly from Feinberg et
al. \cite[Theorem~2]{FKNMOR} applied to the COMDP
$(\P(\X),\A,q,\c)$.

\begin{theorem}{\rm (Feinberg et al. \cite[Theorem~3.1]{FKZ}).}
 \label{teor4.3} Let either Assumption
{\rm{\bf({\bf D})}} or Assumption {\rm\bf({\bf P})} hold. If the
COMDP $(\P(\X),\A,q,\c)$ satisfies {\rm Assumption \textbf{(${\rm
\bf W^*}$)}},    then:

{(i}) the functions ${\bar v}_{t,\alpha}$, $t=0,1,\ldots$, and
${\bar v}_\alpha$ are lower semi-continuous on $\P(\X)$, and
${\bar v}_{t,\alpha}(z)\to %EF10.5
 {\bar v}_\alpha (z)$ as $t \to \infty$ for all
$z\in \P(\X);$

{(ii)} for each $z\in \P(\mathbb{X})$ and $t=0,1,...,$
\begin{equation}\label{eq433}
\begin{aligned}
&\qquad\qquad{\bar v}_{t+1,\alpha}(z)=\min\limits_{a\in
\A}\left\{\c(z,a)+\alpha \int_{\P(\X)} {\bar
v}_{t,\alpha}(z')q(dz'|z,a)\right\}=
\\ &\min\limits_{a\in \A}\left\{\int_{\X}c(x,a)z(dx) +\alpha \int_{\X}\int_{\X}\int_{\Y} {\bar v}_{t,\alpha}(H(z,a,y)) Q(dy|a,x')P(dx'|x,a)z(dx) \right\},
\end{aligned}
\end{equation}
where ${\bar v}_{0,\alpha}(z)=0$ for all $z\in \P(\X)$, and the
nonempty sets
\[
A_{t,\alpha}(z):=\left\{a\in \A:\,{\bar
v}_{t+1,\alpha}(z)=\c(z,a)+\alpha \int_{\P(\X)} {\bar
v}_{t,\alpha}(z')q(dz'|z,a) \right\},\quad z\in \P(\X),\
t=0,1,\ldots,
\]
satisfy the following properties: (a) the graph ${\rm
Gr}(A_{t,\alpha})=\{(z,a):\, z\in\P(\X), a\in A_{t,\alpha}(z)\}$,
$t=0,1,\ldots,$ is a Borel subset of $\P(\X)\times \mathbb{A}$,
and (b) if ${\bar v}_{t+1,\alpha}(z)=+\infty$, then
$A_{t,\alpha}(z)=\A$ and, if ${\bar v}_{t+1,\alpha}(z)<+\infty$,
then $A_{t,\alpha}(z)$ is compact;

{(iii)} for each $T=1,2,\ldots$, for the COMDP there exists an
optimal Markov $T$-horizon policy $(\phi_0,\ldots,\phi_{T-1})$,
and if for a $T$-horizon Markov policy
$(\phi_0,\ldots,\phi_{T-1})$ the inclusions $\phi_{T-1-t}(z)\in
A_{t,\alpha}(z)$, $z\in\P(\X),$ $t=0,\ldots,T-1,$ hold, then this
policy is $T$-horizon optimal;

{(iv)} for  each $z\in
\P(\X)$ % $\alpha\in [0,1)$, if Assumption {\rm\textbf{(D)}} holds, and for $\alpha\in [0,1]$, if Assumption {\rm\textbf{(P)}} holds,
\begin{equation}\label{eq5a}
\begin{aligned}
 &\qquad\qquad{\bar
v}_{\alpha}(z)=\min\limits_{a\in
\A}\left\{\c(z,a)+\alpha\int_{\P(\X)} {\bar
v}_{\alpha}(z')q(dz'|z,a)\right\}=\\
&\min\limits_{a\in \A}\left\{\int_{\X}c(x,a)z(dx) +\alpha
\int_{\X}\int_{\X}\int_{\Y} {\bar v}_{\alpha}(H(z,a,y))
Q(dy|a,x')P(dx'|x,a)z(dx) \right\},\
\end{aligned}
\end{equation}
and the nonempty sets
\[
A_{\alpha}(z):=\left\{a\in \A:\,{\bar
v}_{\alpha}(z)=\c(z,a)+\alpha\int_{\P(\X)} {\bar
v}_{\alpha}(z')q(dz'|z,a) \right\},\quad z\in \P(\X),
\]
satisfy the following properties: (a) the graph ${\rm
Gr}(A_{\alpha})=\{(z,a):\, z\in\P(\X), a\in \A_\alpha(z)\}$ is a
Borel subset of $\P(\X)\times \mathbb{A}$, and (b) if ${\bar
v}_{\alpha}(z)=+\infty$, then $A_{\alpha}(z)=\A$ and, if ${\bar
v}_{\alpha}(z)<+\infty$, then $A_{\alpha}(z)$ is compact.

{(v)} for an infinite horizon problem there exists a stationary
discount-optimal policy $\phi_\alpha$ for the COMDP, and a
stationary policy $\phi_\alpha^{*}$ for the COMDP is optimal if
and only if $\phi_\alpha^{*}(z)\in A_\alpha(z)$ for all $z\in
\P(\X).$

{(vi)}  if $\c$ is inf-compact on $\P(\X)\times\A$, then the
functions ${\bar v}_{t,\alpha}$, $t=1,2,\ldots$, and ${\bar
v}_\alpha$ are inf-compact on $\P(\X)$.
\end{theorem}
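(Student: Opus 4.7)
The plan is to recognize that the COMDP $(\P(\X),\A,q,\c)$ is itself an MDP in which the action set $\A$ is available at every state $z\in\P(\X)$, and then to apply Feinberg et al.~\cite[Theorem~2]{FKNMOR} to this MDP. The conclusions (i)--(vi) of Theorem~\ref{teor4.3} are precisely the conclusions of that general MDP result, so the work reduces to checking that its hypotheses are satisfied by the COMDP.

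First I would verify that the COMDP fits the setting of \cite[Theorem~2]{FKNMOR}: since $\X$ is a Borel subset of a Polish space, so is $\P(\X)$ equipped with the topology of weak convergence, and Borel measurability of $\c$ and of the stochastic kernel $q$ follows from the construction in (\ref{3.3})--(\ref{3.7}) together with the measurable-selection machinery of Bertsekas and Shreve~\cite{BS}. Next I would observe that under Assumption~\textbf{(D)} the bound $c\ge M$ for some finite $M$ yields $\c(z,a)=\int_{\X}c(x,a)z(dx)\ge M$, while under Assumption~\textbf{(P)} the inequality $c\ge 0$ gives $\c\ge 0$; the discount factor $\alpha$ is inherited. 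Finally, Assumption~\textbf{(${\rm \bf W^*}$)} as restated for the COMDP is, by construction, exactly the pair of hypotheses required by \cite[Theorem~2]{FKNMOR}: $\K$-inf-compactness of the one-step cost $\c$ on $\P(\X)\times\A$ and weak continuity of the transition kernel $q$ in $(z,a)\in\P(\X)\times\A$.

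With these checks in place, claims (i)--(vi) follow directly from the corresponding items of the general MDP theorem. The second equality in (\ref{eq433}) and in (\ref{eq5a}) is then obtained by unfolding $\c(z,a)$ via (\ref{eq:c}) and rewriting $\int_{\P(\X)}{\bar v}_{t,\alpha}(z')q(dz'|z,a)$ by means of (\ref{3.7}), (\ref{3.4}), and (\ref{3.3}) as
\[
\int_{\Y}{\bar v}_{t,\alpha}(H(z,a,y))R'(dy|z,a)=\int_{\X}\int_{\X}\int_{\Y}{\bar v}_{t,\alpha}(H(z,a,y))Q(dy|a,x')P(dx'|x,a)z(dx),
\]
and similarly for the infinite horizon equation.

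The main obstacle here is not in the reduction, which is essentially bookkeeping, but in the fact that Assumption~\textbf{(${\rm \bf W^*}$)} for the COMDP is a nontrivial requirement on derived objects: $\K$-inf-compactness of $\c$ must be deduced from properties of $c$, and weak continuity of $q$ is tied, through Bayes's formula (\ref{3.7}), to continuity properties of the POMDP's observation and transition kernels. Sufficient conditions on the POMDP data that secure these properties are the subject of separate results in the paper (cf.\ Theorem~\ref{teor:Rtotvar} and the discussion in Section~\ref{3A}), and are not part of the proof of Theorem~\ref{teor4.3} itself.
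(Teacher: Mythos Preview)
Your proposal is correct and follows exactly the route taken in the paper: the paper states that the theorem ``follows directly from Feinberg et al.~\cite[Theorem~2]{FKNMOR} applied to the COMDP $(\P(\X),\A,q,\c)$'' and provides no further argument. Your additional bookkeeping (that $\P(\X)$ is standard Borel, that $\c$ inherits the lower bound from $c$, and the unfolding of $q$ via (\ref{3.7}), (\ref{3.4}), (\ref{3.3}) to obtain the second equalities in (\ref{eq433}) and (\ref{eq5a})) is a welcome elaboration of details the paper leaves implicit.
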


Theorem~\ref{teor4.3} establishes the existence of stationary
optimal policies, validity of optimality equations, and
convergence of value iterations to optimal values under the
following natural conditions: (i) Assumption ({\bf D}) or ({\bf
P}) and the function $\bar c$ is $K$-inf-compact, and (ii) the
stochastic kernel $q$ on $\P(\X)$ given $\P(\X)\times A$ is weakly
continuous. %Theorem~\ref{main} below follows from
Theorems~\ref{th:wstar} and \ref{t:totalvar} provide sufficient
conditions for (i) and (ii) respectively in terms of the
properties of the cost function $c$ and stochastic  kernels $P$ and $Q$. %In particular,
%Theorem~\ref{th:wstar} implies that if Assumption ({\bf D}) or
%({\bf P}) holds for a POMDP, then it also holds  for the
%corresponding
%COMDP. %EF0403, if the cost  function $c$ is bounded below.

%\begin{theorem}\label{th:wstar}
 %(i)  If the MDP $(\X,\A,P,c)$ satisfies Assumption~{\rm\bf(${\rm \bf W^*}$)}, then  the cost function $\bar c$ for the COMDP is $\K$-inf
%compact.
%
%(ii) If the cost function $c$ is bounded below, the function $\bar
%c$  is bounded below by the same
%constant as  $c$. %and%EF0403
%\end{theorem}
\begin{theorem}\label{th:wstar} {\rm (Feinberg et al. \cite[Theorem~3.4]{FKZ}).}
If the stochastic kernel  $P(dx'|x,a)$ on $\X$ given $\X\times\A$
is weakly continuous and the cost function $c:\X\times\A\to {\bar \R}^1$ is
bounded below and $\K$-inf-compact  on $\X\times\A$, then the cost
function $\c:\P(\X)\times\A\to{\bar \R}^1$ defined for the COMDP in
(\ref{eq:c}) is bounded from below by the same constant as $c$ and
$\K$-inf-compact on $\P(\X)\times\A$.
\end{theorem}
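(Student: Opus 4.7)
The plan is to verify, via Lemma~2.5 of \cite{FKN} as quoted in Section~\ref{S4}, the two characterizing conditions of $\K$-inf-compactness for $\c$ on $\P(\X)\times\A$: (a) $\c$ is lower semi-continuous on $\P(\X)\times\A$, and (b) whenever $z^{(n)}\Wc z$ in $\P(\X)$ and $\{a^{(n)}\}\subseteq\A$ satisfies $\sup_n \c(z^{(n)},a^{(n)})<+\infty$, the sequence $\{a^{(n)}\}$ has a limit point in $\A$. The lower bound $c\ge M$ transfers to $\c\ge M$ immediately, since $z$ is a probability measure: $\c(z,a)=\int_\X c(x,a)z(dx)\ge M$.

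For (a), take $(z^{(n)},a^{(n)})\to (z,a)$ in $\P(\X)\times\A$, so that $z^{(n)}\Wc z$ and $a^{(n)}\to a$. Lemma~2.5(a) applied to $c$ yields joint lower semi-continuity of $c$ on $\X\times\A$; setting $f_n(x):=c(x,a^{(n)})-M\ge 0$ and $f(x):=c(x,a)-M\ge 0$ one therefore has $\liminf_n f_n(x^{(n)})\ge f(x)$ whenever $x^{(n)}\to x$. A generalized Fatou lemma for weakly convergent probability measures with varying nonnegative lower semi-continuous integrands (for example, writing $c$ as the pointwise supremum of an increasing family of bounded continuous functions and applying the Portmanteau theorem) then gives $\liminf_n \int_\X f_n\,dz^{(n)}\ge \int_\X f\,dz$, i.e.\ $\liminf_n \c(z^{(n)},a^{(n)})\ge \c(z,a)$.

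For (b), after replacing $c$ by $c-M\ge 0$, let $\lambda<+\infty$ with $\c(z^{(n)},a^{(n)})\le \lambda$ for all $n$. The weakly convergent sequence $\{z^{(n)}\}_{n\ge 1}$ is tight by Prokhorov's theorem, so one can fix a compact set $K\subseteq\X$ with $z^{(n)}(K)\ge \tfrac34$ for every $n$. Markov's inequality yields $z^{(n)}(\{x:\,c(x,a^{(n)})>4\lambda\})\le \tfrac14$, and combining the two estimates shows that $\{x\in K:\,c(x,a^{(n)})\le 4\lambda\}$ has $z^{(n)}$-measure at least $\tfrac12$; pick $x^{(n)}$ in this set. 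A subsequence $x^{(n_k)}\to x^*\in K\subseteq\X$ exists by compactness, and along it $c(x^{(n_k)},a^{(n_k)})\le 4\lambda$ is bounded. Lemma~2.5(b) applied to $c$ then produces a limit point of $\{a^{(n_k)}\}$ in $\A$, which is a limit point of the original $\{a^{(n)}\}$.

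The main obstacle is precisely the construction in step~(b): translating an integral bound $\int c(x,a^{(n)})\,dz^{(n)}\le \lambda$ into the existence of a ``witness'' sequence $\{x^{(n)}\}$ lying in a single compact subset of $\X$ along which $c(x^{(n)},a^{(n)})$ stays bounded. Tightness from Prokhorov's theorem and Markov's inequality together supply this witness, after which the assumed $\K$-inf-compactness of $c$ on $\X\times\A$ immediately yields the desired limit point.
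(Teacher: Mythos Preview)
The paper does not itself prove Theorem~\ref{th:wstar}; the result is quoted from \cite[Theorem~3.4]{FKZ} and used as a black box, so there is no in-paper argument to compare your proof against.

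Your route through the two conditions of \cite[Lemma~2.5]{FKN} is the natural one and is correct in outline; in particular, the Markov-inequality construction of witness points $x^{(n)}\in K$ with $c(x^{(n)},a^{(n)})\le 4\lambda$, followed by an application of condition~(b) for $c$, is exactly the right mechanism for step~(b). One technical point deserves attention, however: you invoke Prokhorov's theorem to pass from weak convergence of $\{z^{(n)}\}$ to uniform tightness, but that direction of Prokhorov's theorem is stated for Polish spaces, whereas in Section~\ref{S4} the state space $\X$ is only assumed to be a Borel subset of a Polish space. If $\X$ is Polish your argument is complete as written; for a general Borel subset the tightness step needs further justification (for instance via the theory of Radon measures on metrizable Lusin/Souslin spaces, cf.\ \cite[Chapters~7--8]{bogachev}, or by consulting the original proof in \cite{FKZ}). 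Finally, note that your proof, correctly, makes no use of the weak continuity of $P$; that hypothesis plays no role in the conclusion about $\c$.
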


\begin{theorem}\label{t:totalvar} {\rm (Feinberg et al. \cite[Theorem~3.7]{FKZ}).}
The weak continuity of the stochastic kernel $P(dx'|x,a)$ on $\X$
given $\X\times\A$ and continuity in the total variation of the
stochastic kernel $Q(dy|a,x)$ on $\Y$ given $\A\times\X$ imply
that  the stochastic kernel $q(dz'|z,a)$ on $\P(\X)$ given
$\P(\X)\times\A$ is weakly continuous.
\end{theorem}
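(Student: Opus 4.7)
The plan is to show weak continuity of $q$ by verifying that for every bounded continuous $f:\P(\X)\to\R^1$, the function $V(z,a):=\int_{\P(\X)}f(z')\,q(dz'|z,a)=\int_\Y f(H(z,a,y))\,R'(dy|z,a)$ is continuous on $\P(\X)\times\A$. The strategy has three parts: (i) establish that the marginal kernel $R'$ on $\Y$ given $\P(\X)\times\A$ is continuous in the total variation; (ii) show that along any sequence $(z_n,a_n)\to(z_0,a_0)$, a subsequence $(z_{n_k},a_{n_k})$ satisfies $H(z_{n_k},a_{n_k},y)\Wc H(z_0,a_0,y)$ for $R'(\,\cdot\,|z_0,a_0)$-a.e.\ $y$; (iii) conclude by bounded convergence.

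For step (i), write $R'(C|z,a)=\int_\X Q(C|a,x')\,\mu_{z,a}(dx')$ with $\mu_{z,a}(dx'):=\int_\X P(dx'|x,a)\,z(dx)$; weak continuity of $P$ together with weak convergence of $z_n$ give $\mu_n:=\mu_{z_n,a_n}\to\mu_0:=\mu_{z_0,a_0}$ weakly. Split $R'(C|z_n,a_n)-R'(C|z_0,a_0)$ into $\int_\X[Q(C|a_n,x')-Q(C|a_0,x')]\mu_n(dx')$ plus $\int_\X Q(C|a_0,x')(\mu_n-\mu_0)(dx')$ and take $\sup_{C\in\B(\Y)}$. The first supremum is bounded by $\tfrac12\int\dist(Q(\,\cdot\,|a_n,x'),Q(\,\cdot\,|a_0,x'))\mu_n(dx')\to 0$, using that continuity of $Q$ in the total variation gives pointwise convergence and, by continuity of a function on a compact set, uniform convergence on compacts of $x'$; combined with tightness of $\{\mu_n\}$ this yields the limit. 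The second supremum vanishes because the family $\{x'\mapsto Q(C|a_0,x'):C\in\B(\Y)\}$ is uniformly bounded and equicontinuous (equicontinuity is precisely continuity in the total variation of $Q(\,\cdot\,|a_0,\,\cdot\,)$), and such a family integrated against a weakly convergent tight sequence of probabilities yields uniform convergence of integrals.

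For step (ii), fix a bounded continuous $g:\X\to\R^1$ and set $h_n(y):=\int_\X g(x')\,H(dx'|z_n,a_n,y)$. The disintegration (\ref{3.4}) gives $\int_C h_n(y)R'(dy|z_n,a_n)=\int_\X g(x')\,Q(C|a_n,x')\,\mu_n(dx')$, and repeating the argument of step~(i) with the extra bounded continuous factor $g(x')$ yields $\sup_{C\in\B(\Y)}\bigl|\int_C h_n\,dR'(\,\cdot\,|z_n,a_n)-\int_C h_0\,dR'(\,\cdot\,|z_0,a_0)\bigr|\to 0$. Combining this with the total variation convergence of $R'$ from step~(i), Lemma~\ref{setwise} implies $h_n\to h_0$ in $R'(\,\cdot\,|z_0,a_0)$-probability, so a subsequence converges $R'(\,\cdot\,|z_0,a_0)$-a.s. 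Since $\X$ is separable, a countable family of bounded continuous $g$'s determines weak convergence on $\P(\X)$; a diagonal extraction over this countable family produces a single subsequence $(z_{n_k},a_{n_k})$ and a set $C^*\in\B(\Y)$ with $R'(C^*|z_0,a_0)=1$ on which $H(z_{n_k},a_{n_k},y)\Wc H(z_0,a_0,y)$ for all $y\in C^*$.

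Step (iii) is routine: continuity and boundedness of $f$ together with the almost-sure weak convergence of $H$ give $\int_\Y f(H(z_{n_k},a_{n_k},y))R'(dy|z_0,a_0)\to\int_\Y f(H(z_0,a_0,y))R'(dy|z_0,a_0)$ by bounded convergence, and the total variation convergence of $R'(\,\cdot\,|z_n,a_n)$ to $R'(\,\cdot\,|z_0,a_0)$ eliminates the discrepancy from changing the integrating measure. Since every subsequence of $\{(z_n,a_n)\}$ has a further subsequence along which $V(z_{n_k},a_{n_k})\to V(z_0,a_0)$, the full sequence converges, proving weak continuity of $q$. The principal obstacle is step (ii): upgrading the joint regularity of $R$ (weak continuity in $(z,a)$ plus total variation continuity of its $\Y$-marginal) to almost-sure weak convergence of the posterior $H$, which depends critically on the essential uniqueness of the disintegration and on Lemma~\ref{setwise}.
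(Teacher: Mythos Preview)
Your argument is correct and follows the same scheme the paper uses: establish that $R'$ is continuous in the total variation and that Assumption~\textbf{(H)} holds, then conclude via (the content of) Theorem~\ref{th:contqqq2}. The paper reaches those two ingredients by first proving equicontinuity of the families $\mathcal{R}_\oo$ (Theorem~\ref{t:tvimplr}) and then applying the abstract Theorem~\ref{mainthkern}, whose diagonal argument over Lemma~\ref{setwise} uses indicators of sets from a countable base of $\X$; you run the identical Lemma~\ref{setwise}/diagonal mechanism but with a countable weak-convergence–determining family of bounded continuous test functions, which is a cosmetic rather than substantive difference.
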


The following assumption, that has similarities with (\ref{result}), and theorem are used in Feinberg et al. \cite{FKZ} to prove
Theorem~\ref{t:totalvar}.

%The following assumption, that has similarities with (\ref{result}), and theorem are used in
%Feinberg et al. \cite{FKZ} to prove
%Corollary~\ref{t:totalvar}.
%to prove Theorem~\ref{teor:Rtotvar}; see also Corollary~\ref{t:totalvar} and Feinberg et al. \cite[Theorem~3.5]{FKZ}.

\noindent\textbf{Assumption {\bf(H)}}. There exists a stochastic
kernel $H$ on $\X$ given $\P(\X)\times\A\times\Y$ satisfying
(\ref{3.4}) such that: if a sequence
$\{z^{(n)}\}_{n=1,2,\ldots}\subseteq\P(\X)$ converges weakly to
$z\in\P(\X)$, and a sequence $\{a^{(n)}\}_{n=1,2,\ldots}\subseteq\A$
converges to $a\in\A$ as $n\to\infty$, then there exists a
subsequence $\{(z^{(n_k)},a^{(n_k)})\}_{k=1,2,\ldots}\subseteq
\{(z^{(n)},a^{(n)})\}_{n=1,2,\ldots}$ and a measurable subset $C$ of
$\Y$ such that $R'(C|z,a)=1$ and for all $y\in C$
\begin{equation}\label{eq:ASSNH}
H(z^{(n_k)},a^{(n_k)},y)\mbox{ converges weakly to }H(z,a,y). %\
%\mbox{as} \  n\to\infty.
\end{equation}
In other words, \eqref{eq:ASSNH} holds $R'(\,\cdot\,|z,a)$-almost
surely.

According to the following theorem, if the stochastic kernel $R'$ is setwise continuous and Assumption~{\bf(H)} holds, then the stochastic  kernel $q$ is weakly continuous.  According to Feinberg et al.~\cite[Theorem 3.7]{FKZ}, weak continuity of the stochastic  kernel $P$ and continuity of the observation stochastic kernel  $Q$ in the total variation imply that the stochastic kernel  $R'$ is setwise continuous and Assumption~{\bf(H)} holds.  Another sufficient condition for weak continuity of $q$ is that there is a weakly continuous version of a stochastic kernel $H$ on $\X$ given $\P(\X)\times\A\times\Y$; see  Striebel~\cite{St} and Hern\'andez-Lerma~\cite{HL}.  However, this condition may not hold for a POMDP with a weakly continuous stochastic  kernel $P$ and a observation stochastic kernel  $Q$ continuous in the total observation; see Feinberg et al.~\cite[Example 4.2]{FKZ}.

\begin{theorem}\label{th:contqqq2} {\rm (Feinberg et al. \cite[Theorem~3.5]{FKZ}).} If
the stochastic kernel $R'(dy|z,a)$ on $\Y$ given
$\P(\X)\times\A$ is setwise continuous and  Assumption~{\bf(H)}
holds, then the stochastic kernel $q(dz'|z,a)$ on $\P(\X)$ given
$\P(\X)\times\A$ is weakly continuous.
\end{theorem}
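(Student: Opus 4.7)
The plan is to verify weak continuity of $q$ directly by integrating against bounded continuous test functions. Fix a sequence $(z^{(n)},a^{(n)})\to(z,a)$ in $\P(\X)\times\A$, an arbitrary bounded continuous function $f:\P(\X)\to\R^1$ with $M:=\sup_{z'\in\P(\X)}|f(z')|$, and set
$$I_n:=\int_{\P(\X)} f(z')\,q(dz'|z^{(n)},a^{(n)}),\qquad I:=\int_{\P(\X)} f(z')\,q(dz'|z,a).$$
By the definition (\ref{3.7}) of $q$, $I_n=\int_\Y f(H(z^{(n)},a^{(n)},y))\,R'(dy|z^{(n)},a^{(n)})$ and analogously for $I$. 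Since weak convergence of $q(\cdot|z^{(n)},a^{(n)})$ to $q(\cdot|z,a)$ is, by definition, equivalent to $I_n\to I$ for every such $f$, the task reduces to establishing this single limit.

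I would argue along an arbitrary subsequence. Given any subsequence of $\{(z^{(n)},a^{(n)})\}$, Assumption~{\bf(H)} supplies a further subsequence, still written $\{(z^{(n_k)},a^{(n_k)})\}$, and a set $C\in\B(\Y)$ with $R'(C|z,a)=1$ such that $H(z^{(n_k)},a^{(n_k)},y)\Wc H(z,a,y)$ for every $y\in C$. Continuity of $f$ on $\P(\X)$ then forces the uniformly bounded integrands $g_k(y):=f(H(z^{(n_k)},a^{(n_k)},y))$ to converge pointwise to $g(y):=f(H(z,a,y))$ on $C$, i.e.,~$R'(\cdot|z,a)$-almost surely. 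Splitting
$$I_{n_k}-I=\int_\Y(g_k-g)\,dR'(\cdot|z^{(n_k)},a^{(n_k)})+\int_\Y g\,d\bigl[R'(\cdot|z^{(n_k)},a^{(n_k)})-R'(\cdot|z,a)\bigr],$$
the second term tends to $0$ because $R'$ is setwise continuous and $g$ is bounded Borel-measurable.

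The principal obstacle is the first term, which requires combining setwise convergence of the measures with only almost-sure pointwise convergence of the varying integrands. My approach would be an Egorov-type reduction to uniform convergence. For a fixed $\delta>0$, Egorov's theorem applied to the finite measure $R'(\cdot|z,a)$ yields a set $A_\delta\in\B(\Y)$ with $R'(A_\delta^c|z,a)<\delta$ on which $g_k\to g$ uniformly. Since $|g_k|,|g|\le M$,
$$\Big|\int_\Y(g_k-g)\,dR'(\cdot|z^{(n_k)},a^{(n_k)})\Big|\le\sup_{y\in A_\delta}|g_k(y)-g(y)|+2M\,R'(A_\delta^c|z^{(n_k)},a^{(n_k)}).$$
Setwise continuity of $R'$ applied to the single Borel set $A_\delta^c$ yields $R'(A_\delta^c|z^{(n_k)},a^{(n_k)})\to R'(A_\delta^c|z,a)<\delta$, while uniform convergence on $A_\delta$ drives the first summand to $0$. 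Hence $\limsup_k$ of the left-hand side is at most $2M\delta$, and letting $\delta\downarrow 0$ gives convergence to $0$.

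Combining these two pieces gives $I_{n_k}\to I$. Because every subsequence of $\{(z^{(n)},a^{(n)})\}$ admits such a further subsequence with the same limit $I$, the whole sequence satisfies $I_n\to I$. Since $f$ was an arbitrary bounded continuous function, this yields $q(\cdot|z^{(n)},a^{(n)})\Wc q(\cdot|z,a)$ and completes the proof of weak continuity of $q$.
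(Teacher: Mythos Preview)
The paper does not supply its own proof of this theorem; it is quoted verbatim from Feinberg et al.\ \cite[Theorem~3.5]{FKZ} and used only as an input to Theorems~\ref{teor:Rtotvar} and~\ref{main}. Your argument is correct and self-contained: the representation of $I_n$ and $I$ via (\ref{3.7}), the splitting of $I_{n_k}-I$ into a term with fixed integrand $g$ (handled directly by the definition of setwise convergence, since $g$ is bounded and Borel) and a term with varying integrands $g_k$, and the Egorov reduction combined with setwise convergence of $R'(A_\delta^c|\,\cdot\,)$ for the latter, all go through as written. The subsequence principle then closes the argument. There is nothing to compare against within this paper.
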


In addition to Theorem~\ref{t:totalvar}, that provides the sufficient condition of weak continuity of a stochastic  kernel $q$ in terms of transition and observation probabilities $P$ and $Q,$ and to  Theorem~\ref{th:contqqq2}, that provides the sufficient condition of weak continuity of a stochastic  kernel $q$ in terms of stochastic kernels $R'$ and $H,$ a sufficient condition can be formulated in terms of the stochastic kernel $R$  on $\X\times\Y$ given
${\P}(\X)\times \A$, defined in (\ref{3.3}). For each $B\in\tau(\X)$  consider the family of functions
\[\mathcal{R}_B=\{  {\P}(\X)\times \A\to R(B\times C|z,a):\, C\in \tau(\Y)\}\] mapping ${\P}(\X)\times \A$ into $[0,1]$.
\begin{theorem}\label{teor:Rtotvar}
Let the topology on $\X$ have a countable base
$\tau_b^\X$ with the following two properties:
\begin{itemize} \item[(a)]$\X\in\tau_b^\X$, \item[(b)] for
each finite intersection $\oo=\cap_{i=1}^ k {\oo}_{i}$ of sets
$\oo_{i}\in\tau_b^\X,$ $i=1,2,\ldots,k,$
the family of
functions $\mathcal{R}_\oo$ is equicontinuous at all the points
$(z,a)\in \P(\X)\times\A$. \end{itemize} Then the following two statements take place:
\begin{itemize}
 \item[(i)] the stochastic kernel $R'(dy|z,a)$ on $\Y$ given
$\P(\X)\times\A$ is  continuous in the total variation, and  Assumption~{\bf(H)}
holds;
 \item[(ii)] the stochastic kernel %$P$ on $\X$ given
%$\X\times\A$ is setwise continuous and
%the stochastic kernel
$q(dz'|z,a)$ on $\P(\X)$ given
$\P(\X)\times\A$ is weakly continuous.\end{itemize}
\end{theorem}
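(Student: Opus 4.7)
The plan is to decompose the statement into its two parts and reduce each to theorems already established in Section~\ref{3A}. The key observation is that the hypothesis of Theorem~\ref{teor:Rtotvar} is engineered to match exactly the assumption of Theorem~\ref{mainthkern} applied to the stochastic kernel $R$ on $\X\times\Y$ given $\P(\X)\times\A$ with identifications $\S_1:=\X$, $\S_2:=\Y$, $\S_3:=\P(\X)\times\A$.

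For part (i), I would first extract total variation continuity of $R'$ from the special case $\oo=\X\in\tau_b^\X$ of hypothesis (b). The equicontinuity of $\mathcal{R}_\X$ at an arbitrary point $(z,a)$ combined with Lemma~\ref{lem:PB} (with $B=\X$) yields
\[
\sup_{C\in\B(\Y)}|R'(C|z^{(n)},a^{(n)})-R'(C|z,a)|\to 0 \qquad \mbox{as } (z^{(n)},a^{(n)})\to(z,a),
\]
which by \eqref{eq2shir} is precisely the convergence $\dist(R'(\cdot|z^{(n)},a^{(n)}),R'(\cdot|z,a))\to 0$. To obtain Assumption \textbf{(H)}, I would directly invoke Theorem~\ref{mainthkern}: conditions (i) ($\X\in\tau_b^\X$) and (ii) (equicontinuity of $\mathcal{R}_\oo$ at every $s_3\in\S_3$ for every finite intersection $\oo$ of base elements) are exactly hypotheses (a) and (b) of the theorem at hand, so for any $(z^{(n)},a^{(n)})\to(z,a)$ there exists a subsequence $(z^{(n_k)},a^{(n_k)})$ and a set $C^*\in\B(\Y)$ with $R'(C^*|z,a)=1$ such that $H(\,\cdot\,|z^{(n_k)},a^{(n_k)},y)$ converges weakly to $H(\,\cdot\,|z,a,y)$ for every $y\in C^*$. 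This is exactly \eqref{eq:ASSNH}, i.e., Assumption \textbf{(H)}.

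For part (ii), I would feed part (i) into Theorem~\ref{th:contqqq2}. Total variation continuity of $R'$ trivially implies setwise continuity, and Assumption \textbf{(H)} was just established, so Theorem~\ref{th:contqqq2} immediately yields weak continuity of $q(dz'|z,a)$ on $\P(\X)$ given $\P(\X)\times\A$.

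The only real obstacles are bookkeeping rather than substantive: one must verify that $\P(\X)\times\A$ qualifies as a Borel subset of a Polish space (so that Theorem~\ref{mainthkern} applies in this parameter space — this follows since $\X$ Borel in Polish makes $\P(\X)$ Polish under weak topology and $\A$ is Borel in a Polish space by standing assumption), and one must reconcile the argument order of $H$ between Theorem~\ref{mainthkern}, where $H$ is a kernel on $\S_1$ given $\S_2\times\S_3$, and Assumption \textbf{(H)}, where $H$ is written with arguments in the order $(z,a,y)$; this is a harmless permutation, since both $H$'s satisfy the same disintegration identity \eqref{3.4} and are $R'$-a.s.\ unique in $y$. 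No new estimates are required beyond those already packaged in Lemma~\ref{lem:PB}, Theorem~\ref{mainthkern}, and Theorem~\ref{th:contqqq2}.
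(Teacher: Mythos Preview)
Your proposal is correct and follows essentially the same route as the paper's own proof: apply hypothesis (b) with $\oo=\X$ to obtain continuity of $R'$ in the total variation, invoke Theorem~\ref{mainthkern} with $\S_1=\X$, $\S_2=\Y$, $\S_3=\P(\X)\times\A$ to verify Assumption~{\bf(H)}, and then appeal to Theorem~\ref{th:contqqq2} for part~(ii). Your additional remarks on Lemma~\ref{lem:PB}, the Polish structure of $\P(\X)\times\A$, and the argument ordering of $H$ merely make explicit what the paper leaves implicit.
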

\begin{proof}
(i) The equicontinuity at all the points $(z,a) \in \P(\X)
\times \A$ of the family of functions
$\mathcal{R}_{\oo}$ defined on $\P(\X) \times \A$,  being applied to $\oo = \X,$ implies that the stochastic kernel $R'$ on $\X$ given
$\P(\X)\times\A$ is continuous in the total variation.   Theorem~\ref{mainthkern}, being applied to the Borel subsets of Polish spaces $\S_1=\X,$ $\S_2=\Y,$  and $\S_3=\P(\X)\times\A,$ yields that
Assumption~({\bf H}) holds.  (ii) Since the continuity of $R'$ in the total variations implies its setwise continuity, the  statement follows from statement (i) and Theorem~\ref{th:contqqq2}.
\end{proof}

The following theorem completes the descriptions of the relations between the assumptions of Theorems~\ref{t:totalvar}--\ref{teor:Rtotvar}.  Among these three groups of assumptions, the assumptions of Theorem~\ref{th:contqqq2} are the most general, and they follow from the assumptions of Theorem~\ref{teor:Rtotvar}, which in its turn follow from the assumptions of Theorem~\ref{t:totalvar}.

\begin{theorem}\label{t:tvimplr} If
 the stochastic kernel $P(dx'|x,a)$ on $\X$
given $\X\times\A$ is weakly continuous and
the stochastic kernel $Q(dy|a,x)$ on $\Y$ given $\A\times\X$  is continuous in the total variation, then the assumptions of
Theorem~\ref{teor:Rtotvar} hold.
\end{theorem}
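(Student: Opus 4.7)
The plan is to construct a countable base $\tau_b^{\X}$ of $\X$ satisfying conditions~(a) and~(b) of Theorem~\ref{teor:Rtotvar}. Since $\X$ is a Borel subset of a Polish space, it is separable, hence admits a countable base of open sets; adjoining $\X$ itself handles~(a). The core task is~(b): for every open set $\oo$ obtained as a finite intersection of base elements and every point $(z,a)\in\P(\X)\times\A$, the family $\mathcal{R}_\oo$ must be equicontinuous at $(z,a)$. By the analogue of Lemma~\ref{lem:PB} for kernels on product spaces, this reduces to showing, for every sequence $(z^{(n)},a^{(n)})\to(z,a)$, that
\[
\sup_{C\in\B(\Y)}\bigl|R(\oo\times C|z^{(n)},a^{(n)})-R(\oo\times C|z,a)\bigr|\longrightarrow 0.
\]

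I would decompose this difference into three contributions by changing one argument at a time: the action in $Q$, the action in $P$, and the measure $z$. The first piece is bounded by
\[
\int_{\X}\int_{\oo}\sup_{C}|Q(C|a^{(n)},x')-Q(C|a,x')|\,P(dx'|x,a^{(n)})\,z^{(n)}(dx),
\]
which vanishes uniformly in $C$ using uniform continuity of $(a,x')\mapsto Q(\cdot|a,x')$ in $(\P(\Y),\dist)$ on compact subsets of $\A\times\X$, together with tightness of the weakly converging marginals $P^{z^{(n)}}_{a^{(n)}}\to P^{z}_{a}$ (where $P^z_a(\,\cdot\,):=\int P(\,\cdot\,|x,a)z(dx)$; its weak continuity in $(z,a)$ follows from weak continuity of $P$ and weak convergence of $z^{(n)}$ via bounded convergence).

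For the other two contributions, the central tool is the Arzel\`a--Ascoli theorem applied to the family $\mathcal{F}=\{x'\mapsto Q(C|a,x'):C\in\B(\Y)\}$, which is uniformly bounded by $1$ and, by TV continuity of $Q$ in $x'$, equicontinuous on every compact $K\subset\X$. Hence $\mathcal{F}|_K$ is totally bounded in $C(K)$ under the sup-norm, so for any $\varepsilon>0$ there exists a finite $\varepsilon$-net $\{Q(C_1|a,\cdot),\ldots,Q(C_m|a,\cdot)\}$; the supremum over $C$ can then be replaced by a maximum over $j\le m$ at the cost of an error of order $\varepsilon$, provided $K$ has small complement mass (from tightness). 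For each fixed $C_j$, the bounded continuous function $x'\mapsto Q(C_j|a,x')$ lets us apply weak continuity of $P$ and weak convergence $z^{(n)}\to z$.

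The main obstacle I anticipate is the interaction between the indicator $\mathbf{I}_\oo$ and weak convergence. The integrand $Q(C_j|a,x')\mathbf{I}_\oo(x')$ is only lower semicontinuous, so weak convergence of $P(dx'|x,a^{(n)})\to P(dx'|x,a)$ and of $z^{(n)}\to z$ yields only $\liminf$-type bounds for open sets, not the equalities required for a uniform-in-$C$ estimate. Overcoming this cleanly will require either a judicious choice of $\tau_b^{\X}$ so that the boundary of each finite intersection is negligible for the measures $P(\,\cdot\,|x,a)$ and $z$ at the relevant $x$, or a partitioning argument that isolates and uniformly controls a small neighborhood of $\partial\oo$ by exploiting the total-variation continuity of $Q$.
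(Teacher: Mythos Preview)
Your setup and reduction are correct, and you have put your finger on the real difficulty: controlling the supremum over $C$ of an integral over an open set $\oo$ when only weak convergence of the integrating measures is available. But the proposal does not close this gap, and neither suggested workaround can. Choosing a base whose element boundaries are $P(\,\cdot\,|x,a)$- and $z$-null fails because condition~(b) of Theorem~\ref{teor:Rtotvar} demands equicontinuity at \emph{every} $(z,a)\in\P(\X)\times\A$, while the base must be fixed once and for all; no single countable family of open sets has boundaries simultaneously null for this uncountable collection of measures. The second workaround is too vague to carry weight, and in any case TV-continuity of $Q$ in $(a,x')$ gives no control over the $P(\,\cdot\,|x,a)$-mass near $\partial\oo$, which is what a boundary-neighborhood argument would need.

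The tool that dissolves the obstacle is already in the paper: Theorem~\ref{kern}. It asserts that if a kernel $\Psi(ds_2|s_1)$ is weakly continuous and $\mathcal{A}_0\subset\C(\S_2\times\S_3)$ is uniformly bounded and equicontinuous, then for \emph{every} open $\oo\subseteq\S_2$ the family $\bigl\{(s_1,s_3)\mapsto\int_\oo f(s_2,s_3)\Psi(ds_2|s_1):f\in\mathcal{A}_0\bigr\}$ is again equicontinuous. Apply it twice. First, with $\Psi=P(dx'|x,a)$ and $\mathcal{A}_0=\{(x',a)\mapsto Q(C|a,x'):C\in\B(\Y)\}$, which is equicontinuous precisely by TV-continuity of $Q$; this yields equicontinuity of $\bigl\{(x,a)\mapsto\int_\oo Q(C|a,x')P(dx'|x,a):C\bigr\}$ on $\X\times\A$ for every open $\oo\subseteq\X$. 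Second, with $\Psi(dx|z)=z(dx)$ and the family just produced, integrating over $\X$ gives equicontinuity of $\mathcal{R}_\oo$ at every $(z,a)$. This is exactly what the paper's proof invokes via \cite[Lemma~5.3]{FKZ} (which in fact proves the stronger statement for sets $\oo_1\setminus\oo_2$); no special choice of base and no boundary argument are needed, and the conclusion holds for all open $\oo$, not merely finite intersections of base elements.
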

\begin{proof}
In view of Feinberg et al.~\cite[Lemma 5.3]{FKZ}, the family of function $\mathcal{R}_{\oo_1\setminus\oo_2}$ is equicontinuous for two arbitrary open subsets $\oo_1$ and $\oo_2$ in $\X.$  By setting $\oo_2=\emptyset,$  this result implies that the family of functions   $\mathcal{R}_{\oo}$ is equicontinuous for each open subset $\oo$ in $\X.$  Since we endowed $\X$ with the induced
topology from a separable metric space, its topology has a countable base which is closed according to the finite intersections.
%Since intersections of a finite number of open sets is an open set,
Therefore, this countable base of the topology on $\X$  satisfies   assumptions of
Theorem~\ref{teor:Rtotvar}.
\end{proof}

%\begin{corollary}\label{t:totalvar} {\rm (Feinberg et al. \cite[Theorem~3.7]{FKZ}).}
%The weak continuity of the stochastic kernel $P(dx'|x,a)$ on $\X$
%given $\X\times\A$ and continuity in the total variation of the
%stochastic kernel $Q(dy|a,x)$ on $\Y$ given $\A\times\X$ imply
%that  the stochastic kernel $q(dz'|z,a)$ on $\P(\X)$ given
%$\P(\X)\times\A$ is weakly continuous.
%\end{corollary}

%\begin{theorem}\label{th:contqqq2} %EF0403%Under assumption (c) of Theorem
%(\ref{main1}),
%
%Each of the following two groups of conditions implies weak
%continuity of
%The stochastic kernel $q(dz'|z,a)$ on $\P(\X)$ given
%$\P(\X)\times\A$ is weakly continuous if %the stochastic kernel $P(dy|a,x)$ on $\X$ given $\X\times\A$
%is weakly continuous and either
%condition (c)
%or condition (ii)
%from Theorem~\ref{main1} holds. %:
%\begin{enumerate}[(i)] \item the stochastic kernel $R'(dy|z,a)$ on
%$\Y$ given $\P(\X)\times\A$ is setwise continuous and Assumption
%({\bf H}) holds; \item the stochastic kernels $P(dx'|x,a)$ on $\X$
%given $\X\times\A$ and $Q(dy|a,x)$ on $\Y$ given $\A\times\X$ are
%weakly continuous and there exists a weakly continuous kernel $H$
%on $\P(\X)$ given $\P(\X)\times\A\times\Y$ satisfying
%(\ref{3.4}).\end{enumerate}
%the stochastic kernel
%$R'(dy|z,a)$ on $\Y$ given $\P(\X)\times\A$ is setwise continuous
%the stochastic kernels $P(dx'|x,a)$ and $Q(dy|a,x)$ on $\X$ given $\X\times\A$ and on $\Y$ given $\A\times\X$ respectively are
%weakly continuous
%and Assumption ({\bf H}) holds
%\end{theorem}

Observe that Theorem~\ref{t:totalvar} follows from Theorems~\ref{teor:Rtotvar} and~\ref{t:tvimplr}. The following theorem provides sufficient conditions for the
existence of optimal policies for the COMDP.  Its first statement is Theorem~\ref{t:totalvar}, which is
repeated for completeness of the statements.

% As Section~\ref{MDP} illustrates, the condition that the stochastic kernel
%$Q$ on $\Y$ given $\A\times \X$ is  continuous in the total
%variation is more restrictive than less constructive  Assumption~{\bf(H)}.

\begin{theorem}\label{main} {\rm (Feinberg et al. \cite[Theorem~3.6]{FKZ}).}
Let either Assumption {\rm\bf({\bf D})} or Assumption
{\rm\bf({\bf P})} hold. If the function $c$ is $\K$-inf-compact on
$\X\times\A$ then each of the following conditions:
\begin{itemize}
\item[(i)]  the stochastic kernel $P(dx'|x,a)$ on $\X$ given
$\X\times\A$ is weakly continuous, and  the stochastic kernel
$Q(dy|a,x)$ on $\Y$ given $\A\times\X$ is continuous in the total
variation;
\item[(ii)] the assumptions of Theorem~\ref{teor:Rtotvar} hold;
\item[(iii)] the stochastic kernel $R'(dy|z,a)$ on $\Y$ given
$\P(\X)\times\A$ is setwise continuous and  Assumption~{\bf(H)}
holds,
\end{itemize}
implies that the COMDP $(\P(\X),\A,q,\c)$ satisfies {Assumption
{\rm\bf(${\rm \bf W^*}$)}}, and therefore statements (i)--(vi) of
Theorem~\ref{teor4.3} hold.
\end{theorem}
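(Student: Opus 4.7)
The proof verifies Assumption~$({\bf W}^*)$ for the COMDP $(\P(\X),\A,q,\bar{c})$ under each of the three conditions and then invokes Theorem~\ref{teor4.3} to obtain statements~(i)--(vi). Assumption~$({\bf W}^*)$ splits into two parts: (a)~$\K$-inf-compactness of $\bar c$ on $\P(\X)\times\A$, and (b)~weak continuity of the stochastic kernel $q$ on $\P(\X)$ given $\P(\X)\times\A$.

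Part~(b) is immediate in each case: Theorem~\ref{t:totalvar} under~(i), Theorem~\ref{teor:Rtotvar}(ii) under~(ii), and Theorem~\ref{th:contqqq2} under~(iii) deliver weak continuity of $q$ directly. By Theorems~\ref{t:tvimplr} and~\ref{teor:Rtotvar}(i) one also has (i)$\Rightarrow$(ii)$\Rightarrow$(iii), so the three hypotheses are progressively weaker.

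For part~(a) I would apply Theorem~\ref{th:wstar}, which requires $\K$-inf-compactness of $c$ (a hypothesis of the theorem) together with weak continuity of $P$. Under~(i) the latter is assumed. Under~(ii) it follows by specializing the equicontinuity of $\mathcal{R}_{\oo}$ to $C=\Y$ and $z=\delta_x$: since $R(\oo\times\Y\mid\delta_x,a)=P(\oo\mid x,a)$, the hypothesis yields $P(\oo\mid x^{(n)},a^{(n)})\to P(\oo\mid x,a)$ whenever $(x^{(n)},a^{(n)})\to(x,a)$, for every $\oo$ that is a finite intersection of base elements of $\tau_b^{\X}$, and Corollary~\ref{cor:1(1space)} then gives weak continuity of $P$. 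Under~(iii) an analogous derivation uses the disintegration $P(B\mid x,a)=\int_{\Y}H(B\mid\delta_x,a,y)\,R'(dy\mid\delta_x,a)$ together with setwise continuity of $R'$ and the almost-sure weak convergence of $H$ supplied by Assumption~$({\bf H})$, mirroring the Fatou-type argument underlying Theorem~\ref{th:contqqq2} but applied to Dirac priors.

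The principal technical obstacle is case~(iii): weak continuity of $P$ is not directly at hand, and extracting it from Assumption~$({\bf H})$ and the setwise continuity of $R'$ requires careful control of the mixed convergence (setwise for the outer measure, weak for the integrand). A cleaner fallback in that case would be to bypass Theorem~\ref{th:wstar} and establish $\K$-inf-compactness of $\bar c$ directly from $\K$-inf-compactness of $c$ via Prohorov tightness in the Polish ambient space, since $\bar c$ itself does not involve $P$.
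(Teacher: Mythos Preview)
Your overall structure matches the paper's: verify Assumption~$({\bf W}^*)$ for the COMDP and then invoke Theorem~\ref{teor4.3}. For part~(b) you cite exactly the same three results the paper does (Theorems~\ref{t:totalvar}, \ref{teor:Rtotvar}(ii), \ref{th:contqqq2}), and you also note the chain (i)$\Rightarrow$(ii)$\Rightarrow$(iii), which the paper states separately just before the theorem.

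The difference is in part~(a). The paper's proof is one sentence: it simply invokes Theorem~\ref{th:wstar} and moves on. You instead take the weak-continuity-of-$P$ hypothesis in Theorem~\ref{th:wstar} at face value and try to recover it under (ii) and (iii). Your argument under (ii) via $R(\oo\times\Y\mid\delta_x,a)=P(\oo\mid x,a)$ and Corollary~\ref{cor:1(1space)} is correct, but under (iii) you correctly sense trouble: Assumption~{\bf(H)} only gives weak convergence of $H$ along a subsequence and $R'(\,\cdot\,\mid z,a)$-almost surely in $y$, which is not enough to push through the mixed limit for $P(\oo\mid x^{(n)},a^{(n)})$ without further work. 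The paper avoids all of this because the weak-continuity-of-$P$ hypothesis in Theorem~\ref{th:wstar} is not actually used to establish $\K$-inf-compactness of $\bar c$ --- the function $\bar c(z,a)=\int_\X c(x,a)\,z(dx)$ does not involve $P$ at all. Your own ``fallback'' (prove $\K$-inf-compactness of $\bar c$ directly from that of $c$, using tightness of compact sets in $\P(\X)$) is precisely the content being invoked, and is the cleaner route you should take from the outset rather than as a backup.
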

\begin{proof} Theorem~\ref{th:wstar} implies that the cost function $\bar{ c}$ for the COMDP is bounded below and
$\K$-inf-compact on $\P(\X)\times \A$.  Weak continuity of the stochastic kernel $q$ on $\P(\X)$ given $\P(\X)\times \A$
follows from Theorems~\ref{t:totalvar}--\ref{teor:Rtotvar}.
\end{proof}

%Theorem~\ref{main} follows from Theorem~\ref{th:wstar} and from
%the following statement.

Example~4.1 from Feinberg et al. \cite{FKZ} demonstrates that, if
the stochastic kernel $Q(dy|a,x)$ on $\Y$ given $\A\times\X$ is
setwise continuous, then the transition probability $q$ for the
COMDP may not be weakly continuous in $(z,a)\in\P(\X)\times\A$. In
that example the state set consists of two points. Therefore, if
the stochastic kernel $P(dx'|x,a)$ on $\X$ given $\X\times\A$ is
setwise continuous (even if it is continuous in the total
variation) in $(x,a)\in\X\times\A$ then the setwise continuity of
the stochastic kernel $Q(dy|a,x)$ on $\Y$ given $\A\times\X$  is
not sufficient for the weak continuity
of $q$.% as well.

\section{Markov Decision Models with Incomplete Information}\label{S5}
Consider a Markov decision model  with incomplete
information (MDMII); Dynkin and  Yushkevich~\cite[Chapter 8]{DY},
Rhenius~\cite{Rh}, Yushkevich~\cite{Yu} (see also Rieder \cite{Ri}
and B\"auerle and Rieder~\cite{BR} for a version of this model
with transition probabilities having densities). This model is
defined by an \textit{observed state space} $\Y$, an
\textit{unobserved state space} $\W$, an \textit{action space}
$\A$, nonempty \textit{sets of available actions} $A(y),$ where
$y\in\Y$,  a stochastic kernel $P$ on $\Y\times\W$ given
$\Y\times\W\times\A$, and a one-step cost function $c:\, G\to  {\bar \R}^1,$
where $G=\{(y,w,a)\in \Y\times\W\times\A:\, a\in A(y)\}$ is the
graph of the mapping $A(y,w)=A(y),$ $(y,w)\in \Y\times\W.$ Assume
that:

(i) $\Y$, $\W$ and $\A$ are Borel subsets of Polish spaces. For
all $y\in \Y$ a nonempty Borel subset $A(y)$ of $\mathbb{A}$
represents the \textit{set of actions} available at $y;$

(ii) the graph  of the mapping $A:\Y\to 2^\A$, defined as $ {\rm
Gr} ({A})=\{(y,a) \, : \,  y\in \Y, a\in A(y)\}$ is measurable,
that is, ${\rm Gr}(A)\in {\mathcal B}(\Y\times\A)$, and this graph
allows a measurable selection, that is,  there exists a measurable
mapping $\phi:\Y\to \mathbb{A}$ such that $\phi(y)\in A(y)$ for
all $y\in \Y$;

(iii) the stochastic  kernel $P$ on $\X$ given $\Y\times\W\times\A$
is weakly continuous in $(y,w,a)\in \Y\times\W\times\A$;

(iv) the one-step cost function $c$ is $\K$-inf-compact on $G$, that is,
for each compact set $K\subseteq\Y\times\W$ and for each
$\lambda\in \mathbb{R}^1$, the set ${\cal
D}_{K,c}(\lambda)=\{(y,w,a)\in G:\, c(y,w,a)\le\lambda\}$ is
compact.

Let us define $\X=\Y\times\W,$ and for $x=(y,w)\in \X$ let us
define $Q(C|x)= {\bf I}\{y\in C\}$ for all $C\in {\cal B}(\Y).$
Observe that this $Q$ corresponds to the  continuous function $y=
F(x),$ where $F(y,w)=y$ for all $x=(y,w)\in\X$ (here $F$ is a
projection of $\X=\Y\times\W$ on $\Y$).  Thus, as explained in
Example~4.1 from Feinberg et al. \cite{FKZ}, the stochastic kernel $Q(dy|x)$ is weakly
continuous in $x\in\X.$ Then by definition, an MDMII is a POMDP with
the state space $\X$, observation set $\Y$, action space $\A$,
available action sets $A(y)$, stochastic  kernel $P$, observation
kernel $Q(dy|a,x):=Q(dy|x)$, and one-step cost function $c$.
However, this model differs from our basic definition of a POMDP
because action sets $A(y)$ depend on observations and one-step
costs $c(x,a)=c(y,w,a)$ are not defined when $a\notin A(y).$ To
avoid this difficulty, we set $c(y,w,a)=+\infty$ when $a\notin
A(y)$.  The extended function $c$ is $\K$-inf-compact on
$\X\times\A$ because the set ${\cal D}_{K,c}(\lambda)$ remains
unchanged for each $K\subseteq\Y\times\W$ and for each
$\lambda\in\mathbb{R}^1.$

Thus, an MDMII is a special case of a POMDP $(\X,\Y, \A,P,Q,c)$,
when $\X=\Y\times\W$ and the observation kernel $Q$ is %and $Q_0$ are
defined by the projection of $\X$ on $\Y.$ The observation stochastic kernel
$Q(\,\cdot\,|x)$ is weakly continuous in $x\in \X$.  This is
weaker that the continuity of $Q$ in the total variation that,
according to Theorem~\ref{main},  ensures weak continuity of the
stochastic kernel for the COMDP and the existence of optimal
policie. Indeed, Feinberg et al. \cite[Example 8.1]{FKZ}
demonstrates that even under the stronger assumption, that $P$
is setwise continuous, the corresponding stochastic  kernel $q$  on $\P(\X)$ given $\P(X)\times\A$ may not be weakly
continuous.

The natural question is: which conditions are sufficient for the
existence of optimal policies for the MDMII?  Since an MDMII is a particular POMDP, the existence of optimal policies for an MDMII is equivalent to the existence of optimal policies for the COMDP corresponding to this
MDMII.  Theorem~\ref{teor4.3} gives an answer in a general form by stating that such conditions are  the week continuity of the transition probability $q$ of the corresponding COMDP and the $\K$-inf-compactness of the cost function $\bar c$ for the COMDP.  The following theorem provides a sufficient condition for the weak continuity of $q$.
  For each
open set $\oo$ in $\W$ consider the
family of functions
$\mathcal{P}^*_\oo=\{  (x,a)\to P(C\times\oo|x,a):\, C\in%{\cal
\tau(\Y)\}$ mapping $\X\times\A$ into $[0,1]$.

\begin{theorem}\label{t:totalvar1}
Let the topology on $\W$ have a countable base
$\tau_b^\W$ satisfying the following two conditions:
\begin{itemize}
 \item[(i)] $\W\in\tau_b^\W,$  
 \item[(ii)]  for
each finite intersection $\oo=\cap_{i=1}^ k {\oo}_{i}$ of sets
$\oo_{i}\in\tau_b^\W,$ $i=1,2,\ldots,k,$
the family of
functions $\mathcal{P}^*_\oo$ is equicontinuous at all the points
$(x,a)\in \X\times\A$.
\end{itemize} Then  the stochastic kernel %$P$ on $\X$ given
%$\X\times\A$ is setwise continuous and
%the stochastic kernel
$q(dz'|z,a)$ on $\P(\X)$ given
$\P(\X)\times\A$ is weakly continuous.
\end{theorem}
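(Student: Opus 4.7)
The plan is to reduce Theorem~\ref{t:totalvar1} to Theorem~\ref{teor:Rtotvar} by building a suitable countable base on $\X=\Y\times\W$ out of $\tau_b^\W$. Since $\Y$ is a Borel subset of a Polish space, it has a countable base $\tau_b^\Y$, which we may assume contains $\Y$. Set
\[
\tau_b^\X:=\{\oo^\Y\times\oo^\W:\,\oo^\Y\in\tau_b^\Y,\ \oo^\W\in\tau_b^\W\}.
\]
This is a countable base for the product topology on $\X$; since $\Y\in\tau_b^\Y$ and $\W\in\tau_b^\W$, condition (a) of Theorem~\ref{teor:Rtotvar} ($\X\in\tau_b^\X$) holds. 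Every finite intersection of elements of $\tau_b^\X$ has the form $\oo^\Y\times\oo^\W$, where $\oo^\Y=\cap_{i=1}^k\oo^\Y_i$ and $\oo^\W=\cap_{i=1}^k\oo^\W_i$ are finite intersections in $\tau_b^\Y$ and $\tau_b^\W$ respectively.

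The main task is to verify condition (b) of Theorem~\ref{teor:Rtotvar}: for each such $\oo^\Y\times\oo^\W$, the family $\mathcal{R}_{\oo^\Y\times\oo^\W}$ is equicontinuous at every $(z,a)\in\P(\X)\times\A$. Using the MDMII observation kernel $Q(C|a,(y',w'))=\I\{y'\in C\}$ and substituting into (\ref{3.3}),
\[
R((\oo^\Y\times\oo^\W)\times C\,|\,z,a)=\int_\X P((\oo^\Y\cap C)\times\oo^\W\,|\,x,a)\,z(dx),\quad C\in\tau(\Y).
\]
Set $h_D(x,a):=P(D\times\oo^\W\,|\,x,a)$ for $D\in\B(\Y)$. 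Since $\oo^\W$ is a finite intersection of elements of $\tau_b^\W$, by hypothesis $\mathcal{P}^*_{\oo^\W}$ is equicontinuous at every $(x,a)\in\X\times\A$. Applying Lemma~\ref{lem:PB} (with $\S_1=\W$, $\S_2=\Y$, $\S_3=\X\times\A$, and $P$ viewed as a stochastic kernel on $\W\times\Y$ given $\X\times\A$ by swapping coordinates), the family $\{h_D:D\in\B(\Y)\}$ is uniformly bounded by $1$ and equicontinuous at every point of $\X\times\A$; in particular $\{h_D\}\subseteq\C(\X\times\A)$.

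Now apply Theorem~\ref{kern} with $\S_1=\P(\X)$, $\S_2=\X$ (separable, as a Borel subset of a Polish space), $\S_3=\A$, $\mathcal{A}_0=\{h_D\}_{D\in\B(\Y)}$, the open set $\oo=\X$, and the trivially weakly continuous identity stochastic kernel $\tilde P(\,\cdot\,|z):=z(\cdot)$ on $\X$ given $\P(\X)$. The conclusion is that
\[
\left\{(z,a)\to\int_\X h_D(x,a)\,z(dx):\,D\in\B(\Y)\right\}
\]
is equicontinuous at every $(z,a)\in\P(\X)\times\A$. Since $\oo^\Y\cap C\in\B(\Y)$ for every $C\in\tau(\Y)$, $\mathcal{R}_{\oo^\Y\times\oo^\W}$ is a sub-family of the above and hence equicontinuous. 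This verifies condition (b) of Theorem~\ref{teor:Rtotvar}, and part (ii) of that theorem delivers the weak continuity of $q$.

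The main obstacle is the lifting step through Theorem~\ref{kern}: the hypothesis on $\mathcal{P}^*_{\oo^\W}$ controls $P(D\times\oo^\W\,|\,x,a)$ as $(x,a)$ varies, uniformly in $D$, but what is required for $\mathcal{R}_{\oo^\Y\times\oo^\W}$ is uniform-in-$D$ control after integrating against varying $z^{(n)}\to z$. Theorem~\ref{kern} supplies exactly this uniform passage from pointwise equicontinuity in $(x,a)$ to equicontinuity in $(z,a)\in\P(\X)\times\A$ after integration; everything else is bookkeeping via the product base and Theorem~\ref{teor:Rtotvar}.
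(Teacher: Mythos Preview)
Your proposal is correct and follows essentially the same approach as the paper: build the product base $\tau_b^\X$ on $\X=\Y\times\W$, compute $R((\oo^\Y\times\oo^\W)\times C\,|\,z,a)=\int_\X P((\oo^\Y\cap C)\times\oo^\W\,|\,x,a)\,z(dx)$, then apply Theorem~\ref{kern} with the identity kernel $z\mapsto z(\cdot)$ on $\X$ given $\P(\X)$ to lift equicontinuity of $\mathcal{P}^*_{\oo^\W}$ to equicontinuity of $\mathcal{R}_{\oo^\Y\times\oo^\W}$, and conclude via Theorem~\ref{teor:Rtotvar}(ii). The only cosmetic difference is that you pass through Lemma~\ref{lem:PB} to enlarge the index set from $\tau(\Y)$ to $\B(\Y)$ before invoking Theorem~\ref{kern}, whereas the paper applies Theorem~\ref{kern} directly with $\mathcal{A}_0=\{(x,a)\to P((\oo_\Y\cap C)\times\oo_\W\,|\,x,a):C\in\tau(\Y)\}$, which is already a subfamily of $\mathcal{P}^*_{\oo_\W}$.
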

%
%
%In order to prove Theorem~\ref{t:totalvar1}, we need to formulate
%and prove several auxiliary facts. Let $\S$ be a metric space,
%$\F(\S)$ and $\C(\S)$ be respectively the spaces of all real-valued
%functions and all bounded continuous functions defined on $\S$. A
%subset $\mathcal{A}_0\subseteq \F(\S)$ is said to be
%\textit{equicontinuous at a point $s\in\S$}, if $
%\sup\limits_{f\in\mathcal{A}_0}|f(s')-f(s)|\to 0$ as $s'\to s. $ A
%subset $\mathcal{A}_0\subseteq \F(\S)$ is said to be
%\textit{uniformly bounded}, if there exists a constant constant
%$M<+\infty $ such that $ |f(s)|\le M,$ for all $s\in\S$ and  for all
%$f\in\mathcal{A}_0. $  Obviously, if a subset
%$\mathcal{A}_0\subseteq \F(\S)$ is equicontinuous at all the points
%$s\in\S$ and uniformly bounded, then $\mathcal{A}_0\subseteq
%\C(\S).$

\begin{proof}%[Proof of Theorem~\ref{t:totalvar1}]
Let $\tau_b^\Y$ be a countable base of the
topology on $\Y$ closed with respect to the finite intersections. Such base exists, because $\Y$ is the separable metric space. Since finite intersections
of elements of the base $\tau_b^\W$ are open sets, let us choose
$\tau_b^\W$ in a way that finite intersections of elements of
$\tau_b^\W$ belong to $\tau_b^\W.$ Then
$\tau_b^\X:=\{\oo_\Y\times\oo_\W:\, \oo_\Y\in\tau_b^\Y,\, \oo_\W\in\tau_b^\W \}$
is the countable base of the topology on
$\X =\Y\times \W$ defined by the products of the topologies on
$\Y$ and $\W$ and for any finite tuples of open sets
$\{\oo_\Y^{(j)}\}_{j=1}^{N}$ in $\Y$ and
$\{\oo_\W^{(j)}\}_{j=1}^{N}$ in $\W$, $N=1,2,\ldots,$ their finite
intersections $\cap_{j=1}^N\oo_\Y^{(j)}$ and
$\cap_{j=1}^N\oo_\W^{(j)}$ are open in $\Y$ and $\W$
respectively. Moreover,   $\cap_{j=1}^N \left(\oo_\Y^{(j)}\times
\oo_\W^{(j)}\right)= \left(\cap_{j=1}^N\oo_\Y^{(j)}\right)\times
\left(\cap_{j=1}^N\oo_\W^{(j)}\right)\in\tau_b^\X$ for any finite
tuples of open sets $\{\oo_\Y^{(j)}\}_{j=1}^{N}$ from $\tau_b^\Y$ and
$\{\oo_\W^{(j)}\}_{j=1}^{N}$ from $\tau_b^\W$.
From (\ref{3.3}) it follows that
\[
R(C_1\times B\times C_2|z,a)=\int_{\X}P((C_1\cap C_2)\times
B|x,a)z(dx),\quad\ B\in \mathcal{B}(\W),\ C_1,C_2\in
\mathcal{B}(\Y),\ z\in\P(\X),\ a\in \A,
\]
\[
R'(C|z,a)=\int_{\X}P(C\times \W|x,a)z(dx),\qquad
 C\in \mathcal{B}(\Y),\ z\in\P(\X),\ a\in
\A.\] For any nonempty open sets $\oo_\Y\in \tau_b^\Y$ and $\oo_\W\in \tau_b^\W$
respectively, Theorem~\ref{kern}, with $\S_1 = \P(\X)$, $\S_2 = \X$,
$\S_3 = \A$, $\oo = \X$, $\Psi(B | z) = z(B)$, and $\mathcal{A}_0 =
\{(x,a) \to P((\oo_\Y\cap C) \times \oo_\W)|x,a): C \in \tau(\Y)\}$,
implies the equicontinuity of the family of functions
\[
\mathcal{R}_{\oo_\Y\times\oo_\W}=\left\{(z,a)\to R(\oo_\Y\times
\oo_\W\times C|z,a)\,:\, C\in\tau(\Y)\right\},
\]
defined on $\P(\X) \times \A$, at all the points $(z,a) \in \P(\X)
\times \A$. %Being applied to $\oo_\Y = \Y$ and $\oo_\W = \W$, this
%fact implies that the stochastic kernel $R'$ on $\Y$ given
%$\P(\X)\times\A$ is continuous in the total variation. In
%particular, the stochastic kernel $R'$ % on $\Y$ given
%%$\P(\X)\times\A$
%is setwise continuous.
Therefore, Theorem~\ref{teor:Rtotvar}(ii) yields that %\ref{mainthkern} yields that conditions of Theorem~3.5 from Feinberg et al. \cite{FKZ} hold, that is
the stochastic kernel $q(dz'|z,a)$ on $\P(\X)$ given
$\P(\X)\times\A$ is weakly continuous.
\end{proof}

Assumptions of Theorem~\ref{t:totalvar1} are weaker  than equicontinuity at all the points $(x,a)\in \X\times\A$ of the
family of functions $\mathcal{P}_\oo$ for all open sets $\oo$ in $\W$ (see Example~\ref{exa:MDM} above), which in its turn is a weaker assumption than
the continuity of the stochastic kernel $P$ on $\X$ given
$\X\times\A$ in the total variation. %As Theorem~\ref{t:totalvar1}
%states, this equicontinuity is a stronger condition that setwise
%continuity of the transition kernel $P.$
The following theorem
states sufficient conditions for the existence of optimal policies
for MDMIIs, the validity of optimality equations, and convergence of
value iterations to optimal values.  Theorem~\ref{teor:Ren} generalizes \cite[Theorem 8.2]{FKZ}, where the equicontinuity at all the points $(x,a)\in \X\times\A$ of the
family of functions $\mathcal{P}^*_\oo$ for all open sets $\oo$ in $\W$ is assumed.

\begin{theorem}\label{teor:Ren}% {\rm(Feinberg et al.~\cite[Theorem 8.2]{FKZ})}
Let either Assumption~{\rm\bf(D)} or Assumption~{\rm\bf(P)} hold, and let the cost function $c$ be $\K$-inf-compact on $G$. % Consider the  expected discounted cost criterion  with the discount factor $\alpha\in [0,1)$ and,
%if the cost function $c$ is nonnegative, then $\alpha=1$ is also
%allowed. 
 If the topology on $\W$ has a countable base
$\tau_b^\W$ satisfying assumptions (i) and (ii) of Theorem~\ref{t:totalvar1}, %the following two properties:
%\begin{itemize}\item[(i)]  $\W\in\tau_b^\W$,
% \item[(ii)] for
%each finite intersection $\oo=\cap_{i=1}^ k {\oo}_{i}$ of sets
%$\oo_{i}\in\tau_b^\W,$ $i=1,2,\ldots,k,$
%the family of
%functions $\mathcal{P}^*_\oo$ is equicontinuous at all the points
%$(x,a)\in \X\times\A,$
%\end{itemize}
%Assume that  either Assumption ({\bf D}) or Assumption ({\bf P}) holds.
then the COMDP $(\P(\X),\A,q,\c)$ satisfies {\rm Assumption \textbf{(${\rm
\bf W^*}$)}}, and
therefore the conclusions of Theorem~\ref{teor4.3} hold.
\end{theorem}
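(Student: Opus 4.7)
The plan is to verify the two conditions of Assumption~$(\mathbf{W}^*)$ for the COMDP $(\P(\X),\A,q,\c)$ and then invoke Theorem~\ref{teor4.3}. Condition $(\mathbf{W}^*)$(i) requires $\K$-inf-compactness of $\c$ on $\P(\X)\times\A$, and condition $(\mathbf{W}^*)$(ii) requires weak continuity of the stochastic kernel $q(dz'|z,a)$ on $\P(\X)$ given $\P(\X)\times\A$.

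First, I would handle $(\mathbf{W}^*)$(i). The MDMII is reinterpreted as a POMDP with state space $\X=\Y\times\W$ by extending the one-step cost via $c(y,w,a):=+\infty$ whenever $a\notin A(y)$. As already pointed out in the excerpt preceding Theorem~\ref{teor:Ren}, this extension preserves $\K$-inf-compactness: for every compact $K\subseteq \X$ and every $\lambda\in\R^1$, the sublevel set $\{(x,a)\in \X\times \A: c(x,a)\le\lambda\}\cap(K\times\A)$ coincides with ${\cal D}_{K,c}(\lambda)$ from the MDMII specification, which is compact. Together with weak continuity of $P$ on $\X$ given $\X\times\A$ (postulated in part (iii) of the MDMII definition) and assumption $(\mathbf{D})$ or $(\mathbf{P})$ providing a lower bound on $c$, Theorem~\ref{th:wstar} yields that $\c$ is bounded below and $\K$-inf-compact on $\P(\X)\times\A$.

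Second, for $(\mathbf{W}^*)$(ii) I would apply Theorem~\ref{t:totalvar1} directly. Its hypotheses, namely that $\W\in\tau_b^\W$ and that for every finite intersection $\oo=\cap_{i=1}^k\oo_i$ of sets $\oo_i\in\tau_b^\W$ the family $\mathcal{P}^*_\oo$ is equicontinuous at all points $(x,a)\in\X\times\A$, are exactly the assumptions imposed on the base $\tau_b^\W$ in the statement of Theorem~\ref{teor:Ren}. Theorem~\ref{t:totalvar1} then immediately delivers weak continuity of the stochastic kernel $q(dz'|z,a)$ on $\P(\X)$ given $\P(\X)\times\A$.

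Having established both conditions of Assumption~$(\mathbf{W}^*)$, I would conclude by citing Theorem~\ref{teor4.3}, whose conclusions (i)--(vi) are then in force for the COMDP $(\P(\X),\A,q,\c)$ associated with the MDMII. The proof is almost entirely a bookkeeping assembly; the nontrivial content has been imported from Theorems~\ref{th:wstar}, \ref{t:totalvar1}, and \ref{teor4.3}. The only mild subtlety, and hence the closest thing to an obstacle, is making sure that the extended cost function (with value $+\infty$ off of $G$) is correctly recognized as $\K$-inf-compact on $\X\times\A$ so that Theorem~\ref{th:wstar} applies verbatim; this is precisely the point already discussed in the paragraph of the excerpt that converts an MDMII into a POMDP.
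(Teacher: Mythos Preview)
Your proposal is correct and follows essentially the same assembly as the paper: verify $(\mathbf{W}^*)$(i) via Theorem~\ref{th:wstar}, verify $(\mathbf{W}^*)$(ii) via Theorem~\ref{t:totalvar1}, and conclude with Theorem~\ref{teor4.3}. The only minor difference is that for $(\mathbf{W}^*)$(i) the paper cites Corollary~\ref{teor:2} in addition to Theorem~\ref{th:wstar}: rather than invoking the standing MDMII assumption~(iii) for weak continuity of $P$, the paper observes that the equicontinuity hypotheses on $\tau_b^\W$ already imply, via Corollary~\ref{teor:2}, that $P$ is weakly continuous on $\X=\Y\times\W$ given $\X\times\A$ (the equicontinuity of each $\mathcal{P}^*_\oo$ yields continuity of $(x,a)\mapsto P(C\times\oo\,|\,x,a)$ for all open $C\subseteq\Y$ and all finite intersections $\oo$ from $\tau_b^\W$, which is exactly the input to Corollary~\ref{teor:2}). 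This makes the theorem self-contained without relying on~(iii), but since~(iii) is a standing assumption in the MDMII setup, your route is equally valid and slightly shorter.
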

\begin{proof}
Assumption  \textbf{(${\rm
\bf W^*}$)}(i) follows from Corollary~\ref{teor:2} and Theorem~\ref{th:wstar}. Assumption  \textbf{(${\rm
\bf W^*}$)}(ii) follows from Theorem~\ref{t:totalvar1}. Therefore, the COMDP $(\P(\X),\A,q,\c)$ satisfies {\rm Assumption \textbf{(${\rm
\bf W^*}$)}} and
the conclusions of Theorem~\ref{teor4.3} hold.
\end{proof}

 {\bf Acknowledgements.}   The authors thank   M. Mandava for providing  useful remarks. The research of the
first author was partially supported by NSF grant CMMI-1335296.

\end{document}